\providecommand{\U}[1]{\protect\rule{.1in}{.1in}}
\newtheorem{theorem}{Theorem}
\newtheorem{example}[theorem]{Example}
\newtheorem{lemma}[theorem]{Lemma}
\newtheorem{proposition}[theorem]{Proposition}
\newtheorem{remark}[theorem]{Remark}
\newenvironment{proof}[1][Proof]{\noindent\textbf{#1.} }{\ \rule{0.5em}{0.5em}}
\begin{document}

\title{On the recurrence and robust properties of Lorenz'63 model }
\author{M. Gianfelice\thanks{Partially supported by GREFI-MEFI and PEPS
\emph{Mathematical Methods of Climate Models}. S.V. wish to thank G.
Cristadoro, W. Bahsoun, R. Aimino and I. Melbourne for useful discussions and comments.}, F.
Maimone\thanks{Partially supported by PEPS \emph{Mathematical Methods of
Climate Models}}, V. Pelino$^{\dagger}$, S. Vaienti$^{\ast}$}
\maketitle

\begin{abstract}
Lie-Poisson structure of the Lorenz'63 system gives a physical insight on its
dynamical and statistical behavior considering the evolution of the associated
Casimir functions. We study the invariant density and other recurrence
features of a Markov expanding Lorenz-like map of the interval arising in the
analysis of the predictability of the extreme values reached by particular
physical observables evolving in time under the Lorenz'63 dynamics with the
classical set of parameters. Moreover, we prove the statistical stability of
such an invariant measure. This will allow us to further characterize the SRB
measure of the system.

\end{abstract}
\tableofcontents

{\small {\ \emph{AMS Subject Classification }: 37C10, 37E05, 37D45 . }}

{\small \emph{Keywords and phrases}: Lorenz attractor, Dynamical Systems, maps
of the interval. }

\section{Introduction}

In 1963 E. Lorenz by a drastic truncation of fluid-dynamics equations
governing the atmospheric motion obtained a system of ODE which he proposed as
a crude yet non trivial model of thermal convection of the atmosphere
\cite{L}. As a matter of fact, the Lorenz model is today understood as a basic
toy-model for the evolution of Earth atmosphere's regimes, like zonal or
blocked circulation or climate regimes (e.g. warm and cold), in which dynamics
is described by equilibrium states \cite{CMP}, \cite{Se}. In his work Lorenz
showed such system to exhibit, for a large set of parameters values, a
peculiar chaotic behavior, that is exponential sensitivity to perturbations of
initial conditions and the existence of a global attracting set for the flow
nowadays called generalized nontrivial hyperbolic attractor. Although there
exists an extensive literature on the subject, we refer the reader to
\cite{Sp} for a rather comprehensive overview on this problem and to \cite{V}
for a recent account on the progress made on the rigorous analysis of the
Lorenz '63 ODE\ system and the relationship between this and its more abstract
counterpart, the geometric Lorenz model, introduced in the second half of the
seventies (\cite{G}, \cite{ABS} and \cite{GW}) to describe the geometrical
features a dynamical system should posses in order to exhibit the same
asymptotic behavior as the Lorenz one. An affirmative answer to the
long-standing question whether the original Lorenz '63 flow fits or not the
description of the one modeled by the geometric Lorenz, which means it
supports a robust singular hyperbolic (strange) attractor, has been given by
W. Tucker in \cite{T} by means of a computer assisted proof. As a byproduct,
Tucker also proved that Lorenz flow admits a unique SRB measure supported on
the strange attractor. Further results about the characterization of the set
of geometric Lorenz-like maps are given in \cite{LM}.

In 2000 it has been emphasized that Lorenz '63 model and the Kolmogorov one,
considered as a low-order approximation of the Navier-Stokes equations, belong
to a particular class of dynamical systems, named Kolmogorov-Lorenz systems
\cite{PP1}, whose vector field admits a representation as a sum of a
Hamiltonian $SO\left(  3\right)  $-invariant field, a dissipative linear field
and constant forcing field (see also \cite{PP2} for an extension of this
analysis to the Lorenz '84 model). Moreover, they proved that the chaotic
behavior of these models relies on the interplay between dissipation and forcing.

More specifically, and more recently, in \cite{PM} it has been shown that the
effect of the dissipative and forcing terms appearing in the previously
described decomposition of the Lorenz '63 vector field, with the classical set
of parameters, is to induce chaotic oscillations in the time evolution of the
first integrals of the Hamiltonian system associated to the Lorenz '63 model,
namely the Hamiltonian and the Casimir function for the (+) Lie-Poisson
brackets associated the $so\left(  3\right)  $ algebra \cite{MR}, which
represents the angular momentum of a free rigid body in the Kolmogorov-Lorenz
representation of geofluid dynamics introduced in \cite{PP1}. In particular,
it has been shown that two subsequent oscillation peaks in the plot of the
Casimir function $C$ as a function of time are related by a map $\Phi$ of the
interval similar to the one originally computed by Lorenz in \cite{L}
depicting the functional dependence of two subsequent maximum values assumed
by the third coordinate of the flow as a function of time. We remark that,
being $C$ the square norm of the flow, the similarity of the plots of this two
maps is therefore not surprising. In \cite{PM}, the recurrence properties of
$\Phi$ are also studied, which allows to characterize the trajectories of the
system through the number of revolution they perform around the unstable point
lying on one side of the plane $x+y=0$ when the initial condition is chosen on
the opposite side (\cite{PM} figg. 2,10 and 11).

In our paper we clarify what stated in \cite{PM} by giving an account in the
first section of the rigid body formulation of the Lorenz '63 model and
constructing, in the next section, a Markov expanding Lorenz-like map $T$ of
the interval being the reduction to $\left[  0,1\right]  $ of $\Phi.$ Both
maps are in fact derived throughout the Poincar\'{e} map associated to the
surface in the configuration space of the system corresponding to the set of
maxima reached by the Casimir function during its time evolution. Hence, we
will study the invariant measure under the dynamics defined by $T$
characterizing its density and consequently the SRB measure of the system.
Furthermore, we analyse the recurrence properties of the dynamics induced by
$T$ clarifying more rigorously what stated in Section IV B of \cite{PM}.

We will also perturb the system by adding an extra forcing term which will
eventually cause the system to loose its symmetry under the involution
$R:\left(  x,y,z\right)  \rightarrow\left(  -x,-y,z\right)  $ of
$\mathbb{R}^{3}.$ Due to the robustness of the attractor, i.e. persistence
under perturbations of the parameters, proved in \cite{T}, maps analogous to
$T$ can be defined and studied, and their statistical properties analysed as
in the unperurbed case. Therefore, such perturbation of the Lorenz '63 field
will only have the effect to induce a change in the statistics of the
invariant measure for the system, the SRB measure. We will prove that this
change could be detected by looking at the deviation of the invariant density
of the perturbed map with respect to the unperturbed one. Such result would
confirm what has been empirically shown in \cite{CMP} about the impact of
anthropogenic forcing to climate dynamics of the northern hemisphere.

We also believe that this analysis could also be pursued in the case of more
general $N$-dimensional models such as those introduced by Zeitlin in \cite{Z}
to approximate in the limit of $N$ tending to infinity the dynamics of the
atmosphere in absence of dissipation and forcing.

We will present elsewhere our contributions in these directions; here we prove
the first non-trivial result about the statistical stability of the invariant
measure for $T.$ The technique we propose is new and we believe it could be
applied as well for other maps with some sort of criticalities.

We remark that, in particular, the distribution of the return times of a
measurable subset of $[0,1],$ which can be derived directly from the invariant
measure of $T,$ could be useful in studying the statistics of extreme
meteorological events.

\section{Return Lorenz-like maps}

\subsection{Rigid body formulation of the Lorenz '63 model\label{KLS}}

It can be shown (\cite{PP1}) that the Lorenz '63 ODE system \cite{L},
\begin{equation}
\left\{
\begin{array}
[c]{l}%
\dot{x}_{1}=-\sigma x_{1}+\sigma x_{2}\\
\dot{x}_{2}=-x_{1}x_{3}+\rho x_{1}-x_{2}\\
\dot{x}_{3}=x_{1}x_{2}-\beta x_{3}%
\end{array}
\right.  \label{l}%
\end{equation}
can be mapped, through the change of variables
\begin{equation}
\left\{
\begin{array}
[c]{l}%
u_{1}=x_{1}\\
u_{2}=x_{2}\\
u_{3}=x_{3}-\left(  \rho+\sigma\right)
\end{array}
\right.  \ ,
\end{equation}
to the ODE system
\begin{equation}
\left\{
\begin{array}
[c]{l}%
\dot{u}_{1}=-\sigma u_{1}+\sigma u_{2}\\
\dot{u}_{2}=-u_{1}u_{3}-\sigma u_{1}-u_{2}\\
\dot{u}_{3}=u_{1}u_{2}-\beta u_{3}-\beta\left(  \rho+\sigma\right)
\end{array}
\right.  \label{l1}%
\end{equation}
representing the evolution of a Hamiltonian system whose configuration space
is the $SO\left(  3\right)  $ group, subject to dissipation and to a constant
forcing. That is, denoting by
\begin{equation}
\{F,G\}:=\omega_{+}^{2}\left(  ad_{\nabla F}^{\ast}x,ad_{\nabla G}^{\ast
}x\right)  =x\cdot\nabla F\times\nabla G \label{LPb}%
\end{equation}
the Lie-Poisson brackets associated to the symplectic 2-form $\omega_{+}^{2}$
defined on the cotangent bundle of $SO\left(  3\right)  $ \cite{MR},
(\ref{l1}) reads
\begin{equation}
\dot{u}_{i}=\{u_{i},H\}-\left(  \Lambda u\right)  _{i}+f_{i}\ ,\qquad i=1,2,3,
\label{lpp}%
\end{equation}
where:

\begin{itemize}
\item
\begin{equation}
H\left(  u\right)  :=\frac{1}{2}u\cdot\Omega u+h\cdot u
\end{equation}
is the Hamiltonian of a rigid body whose kinetical term is given by the matrix
$\Omega:=diag\left(  2,1,1\right)  ,$ while $h:=\left(  0,0,-\sigma\right)  $
is an axial torque;

\item $\Lambda:=diag\left(  \sigma,1,\beta\right)  $ is the dissipation matrix;

\item $f:=\left(  0,0,-\beta\left(  \rho+\sigma\right)  \right)  $ is a
forcing term.
\end{itemize}

This representation allows to study the Lorenz system as a perturbation of the
Hamiltonian system
\begin{equation}
v_{i}\left(  u\right)  :=\{u_{i},H\},\qquad i=1,2,3, \label{lppi}%
\end{equation}
admitting, as in the case of a rigid body with a fixed point, two independent
first integrals $H$ and the Casimir function $C$ for the Poisson brackets
(\ref{LPb}) \cite{MR}. In fact, when rewritten in this form, it follows
straightforwardly that the system is non chaotic for $\sigma=0$ \cite{PP1}
while, for $\sigma\neq0,$ the values of $C$ and $H$ undergo chaotic
oscillations \cite{PM}.

Moreover, when passing to the representation (\ref{lpp}), the symmetries of
the system are preserved as well as other features such as the invariance of
the $x_{3}\ \left(  u_{3}\right)  $ axis and the direction of rotation of the
trajectories about this axis. The critical points of the velocity field of the
system are then
\begin{equation}
c_{1}:=\left(  \sqrt{\beta\left(  \rho-1\right)  },\sqrt{\beta\left(
\rho-1\right)  },-\left(  \sigma+1\right)  \right)  ,\ c_{2}:=\left(
-x_{1}\left(  c_{1}\right)  ,-x_{2}\left(  c_{1}\right)  ,x_{3}\left(
c_{1}\right)  \right)  \ ,
\end{equation}
and $c_{0}:=\left(  0,0,-\left(  \rho+\sigma\right)  \right)  .$

We also remark that (\ref{lpp}) can be rewritten in the form
\begin{equation}
\dot{u}=v-w\ , \label{decu'}%
\end{equation}
where $v$ is the divergence free field (\ref{lppi}) and
\begin{equation}
\mathbb{R}^{3}\ni u\longmapsto w\left(  u\right)  :=\Lambda u-f=\nabla
K\left(  u\right)  \in\mathbb{R}^{3}%
\end{equation}
with
\begin{equation}
K\left(  u\right)  :=\frac{1}{2}u\cdot\Lambda u-f\cdot u
\end{equation}
a convex function on $\mathbb{R}^{3}.$ Notice that the fields $v$ and $w$ are
orthogonal in $L^{2}\left(  r\mathcal{B};\mathbb{R}^{3}\right)  ,$ where
$B:=\{u\in\mathbb{R}^{3}:\left\Vert u\right\Vert \leq1\}$ is the unitary ball
in $\mathbb{R}^{3}$ and $rB$ denotes the ball of radius $r.$

The decomposition of the velocity field as the sum of a divergence free field
ad a gradient one, together with the appearance in the Hamiltonian description
of the flow of the Lie-Poisson brackets (\ref{LPb}) in the space reference
frame of the rigid body, i.e. right translation on $SO\left(  3\right)  ,$ is
standard in fluid dynamics \cite{A}, \cite{MR} and can be seen as another
source of analogy between the Lorenz '63 model and Navier-Stokes equations
\cite{PP1}, \cite{FJKT}.

\subsection{The return map on the set of maxima of the Casimir function}

If $u_{0}$ is any non stationary point for the field (\ref{l1}) such that
$\left\Vert u_{0}\right\Vert \leq\frac{\left\Vert f\right\Vert }{\sqrt
{\lambda_{\Lambda}}},$ with $\lambda_{\Lambda}:=\min\{t\in spec\Lambda\},$ and
$C\left(  t\right)  :=\left\Vert u\left(  t,u_{0}\right)  \right\Vert ^{2},$
let
\begin{equation}
m:=\inf_{t>0}C\left(  t\right)  \ ;\ M:=\sup_{t>0}C\left(  t\right)  \ .
\end{equation}
Clearly, $m\geq0$ since $C\geq0.$ Moreover, $M<\infty$ since it has been shown
in \cite{PP1} that $C\left(  t\right)  \leq\frac{\left\Vert f\right\Vert
}{\sqrt{\lambda_{\Lambda}}}$ where, for our choice of parameters,
\begin{equation}
\frac{\left\Vert f\right\Vert }{\sqrt{\lambda_{\Lambda}}}=\left\Vert
f\right\Vert =\beta\left(  \rho+\sigma\right)  \ .
\end{equation}

To construct the function which links two subsequent relative maximum values
of $C\left(  t\right)  $ we proceed as follows:

\begin{itemize}
\item first we identify the manifold $\Sigma$ in the configuration space of
the system corresponding to the relative maxima of $C\left(  t\right)  ,$

\item then we construct a map of the interval $\left[  0,1\right]  $ to itself
as a function of the map of the interval $\left[  m,M\right]  $ of the
possible values of $C\left(  t\right)  $ in itself, which can be defined
through the Poincar\'{e} map of this manifold.
\end{itemize}

The existence of the aforementioned Poincar\'{e} map follows from the
existence of the return map computed by Tucker in \cite{T}.

Throughout the paper we will assume $\sigma=10,\ \rho=28,\ \beta=\frac{8}{3}.
$

\subsubsection{Identification of $\Sigma$}

By (\ref{decu'}) we get
\begin{equation}
\dot{C}\left(  u\right)  =-2\left[  E\left(  u\right)  -\frac{\beta\left(
\rho+\sigma\right)  ^{2}}{4}\right]  \ . \label{C'-E}%
\end{equation}
with
\begin{equation}
E\left(  u\right)  :=\sigma u_{1}^{2}+u_{2}^{2}+\beta\left(  u_{3}%
+\frac{\left(  \rho+\sigma\right)  }{2}\right)  ^{2}\ .
\end{equation}
Therefore,
\begin{equation}
\mathcal{E}:=\left\{  u\in\mathbb{R}^{3}:\dot{C}\left(  u\right)  =0\right\}
=\left\{  u\in\mathbb{R}^{3}:E\left(  u\right)  =\frac{\beta\left(
\rho+\sigma\right)  ^{2}}{4}\right\}  \ ,
\end{equation}
as already noticed in \cite{PM}, is an ellipsoid intersecting the vertical
axis ($u_{3}$) in the origin and in $c_{0}.$ This also implies, $M=\rho
+\sigma.$ Clearly, $c_{1},c_{2}\in\mathcal{E}.$

Moreover, by (\ref{C'-E}) and (\ref{LPb})%
\begin{align}
\ddot{C}\left(  u\right)   &  =2\nabla E\cdot\left[  u\times\nabla H+\nabla
K\right]  \left(  u\right) \\
&  =4\left\{  \sigma^{2}u_{1}^{2}+u_{2}^{2}-\left[  \sigma\left(
\sigma-1\right)  +\left(  \beta-1\right)  \left(  u_{3}+\frac{\rho+\sigma}%
{2}\right)  +\frac{\rho+\sigma}{2}\right]  u_{1}u_{2}\right. \nonumber\\
&  \left.  +\beta^{2}\left(  u_{3}+\frac{\rho+\sigma}{2}\right)  ^{2}%
+\beta^{2}\frac{\rho+\sigma}{2}\left(  u_{3}+\frac{\rho+\sigma}{2}\right)
\right\}  \ .\nonumber
\end{align}
Let us set $z:=u_{3}+\frac{\rho+\sigma}{2},$ then
\begin{gather}
\mathcal{E}^{\prime}:=\left\{  u\in\mathbb{R}^{3}:\ddot{C}\left(  u\right)
=0\right\} \\
=\left\{  u\in\mathbb{R}^{3}:\sigma^{2}u_{1}^{2}+u_{2}^{2}-\left[
\sigma\left(  \sigma-1\right)  +\left(  \beta-1\right)  z+\frac{\rho+\sigma
}{2}\right]  u_{1}u_{2}\right. \nonumber\\
\left.  +\beta^{2}z\left(  z+\frac{\rho+\sigma}{2}\right)  =0\right\}
\ .\nonumber
\end{gather}
We remark that, denoting by $R$ the involution
\begin{equation}
\mathbb{R}^{3}\ni u=\left(  u_{1},u_{2},u_{3}\right)  \longmapsto Ru:=\left(
-u_{1},-u_{2},u_{3}\right)  \in\mathbb{R}^{3}\ ,
\end{equation}
leaving invariant the field $\dot{u},\ R\mathcal{E}=\mathcal{E}$ and
$R\mathcal{E}^{\prime}=\mathcal{E}^{\prime}.$

Consider the diffeomorphism
\begin{equation}
q_{i}=O\left(  z\right)  u_{i},\ i=1,2\ ;\ q_{3}=z
\end{equation}
such that for any fixed value of $z,\ O\left(  z\right)  $ is an orthogonal
matrix diagonalizing the symmetric quadratic form
\begin{equation}
\zeta\cdot A\left(  z\right)  \zeta:=\sigma^{2}\zeta_{1}^{2}+\zeta_{2}%
^{2}-\left[  \sigma\left(  \sigma-1\right)  +\left(  \beta-1\right)
z+\frac{\rho+\sigma}{2}\right]  \zeta_{1}\zeta_{2}\;,\qquad\zeta\in
\mathbb{R}^{2}\ ,
\end{equation}
namely, setting $A\left(  z\right)  =O^{t}\left(  z\right)  diag\left(
\lambda_{1}\left(  z\right)  ,\lambda_{2}\left(  z\right)  \right)  O\left(
z\right)  ,$%
\begin{equation}
u\cdot A\left(  z\right)  u=q\cdot O\left(  z\right)  A\left(  z\right)
O^{t}\left(  z\right)  q=\lambda_{1}\left(  z\right)  q_{1}^{2}+\lambda
_{2}\left(  z\right)  q_{2}^{2}\ .
\end{equation}
with
\begin{align}
\lambda_{1}\left(  z\right)   &  =\frac{\sigma^{2}+1+\sqrt{\left(  \sigma
^{2}-1\right)  ^{2}+\left[  \frac{\rho+\sigma}{2}+\sigma\left(  \sigma
-1\right)  +\left(  \beta-1\right)  z\right]  ^{2}}}{2}\ ,\\
\lambda_{2}\left(  z\right)   &  =\frac{\sigma^{2}+1-\sqrt{\left(  \sigma
^{2}-1\right)  ^{2}+\left[  \frac{\rho+\sigma}{2}+\sigma\left(  \sigma
-1\right)  +\left(  \beta-1\right)  z\right]  ^{2}}}{2}\ .
\end{align}
Under this change of variables
\begin{equation}
\mathcal{E}^{\prime}=\left\{  q\in\mathbb{R}^{3}:\lambda_{1}\left(
q_{3}\right)  q_{1}^{2}+\lambda_{2}\left(  q_{3}\right)  q_{2}^{2}+\beta
^{2}q_{3}\left(  q_{3}+\frac{\rho+\sigma}{2}\right)  =0\right\}  \ .
\end{equation}
Since $\lambda_{1}\left(  q_{3}\right)  $ is positive for any choice of the
parameters $\beta,\rho,\sigma$ and $q_{3},$ the equation giving the
intersection of $\mathcal{E}^{\prime}$ with the planes parallel to $q_{3}%
=0$\linebreak($u_{3}=-\frac{\left(  \rho+\sigma\right)  }{2}$) can have a
solution only if $\lambda_{2}\left(  q_{3}\right)  $ is negative, that is for
\begin{align}
q_{3}  &  >-\frac{\sigma\left(  \sigma-3\right)  +\frac{\rho+\sigma}{2}}%
{\beta-1}\ \Rightarrow\ u_{3}>-\frac{1}{\beta-1}\left[  \sigma\left(
\sigma-3\right)  +\beta\frac{\left(  \rho+\sigma\right)  }{2}\right]  \ ;\\
q_{3}  &  <-\frac{\sigma\left(  \sigma+1\right)  +\frac{\rho+\sigma}{2}}%
{\beta-1}\ \Rightarrow\ u_{3}<-\frac{1}{\beta-1}\left[  \sigma\left(
\sigma+1\right)  +\beta\frac{\left(  \rho+\sigma\right)  }{2}\right]  \ .
\end{align}
Therefore, for $q_{3}\neq0\ $($u_{3}\neq-\frac{\left(  \rho+\sigma\right)
}{2}$), these intersections are hyperbolas while, if $q_{3}=0$ or
$q_{3}=-\frac{\left(  \rho+\sigma\right)  }{2}\ $($u_{3}=-\left(  \rho
+\sigma\right)  $), from the definition of $\mathcal{E}^{\prime}$ we get the equations

\begin{itemize}
\item if $q_{3}=0,$%
\begin{equation}
\sigma^{2}u_{1}^{2}+u_{2}^{2}-\left[  \frac{\left(  \rho+\sigma\right)  }%
{2}+\sigma\left(  \sigma-1\right)  \right]  u_{1}u_{2}=0\ ;
\end{equation}

\item if $q_{3}=-\frac{\left(  \rho+\sigma\right)  }{2},$%
\begin{equation}
\sigma^{2}u_{1}^{2}+u_{2}^{2}-\left[  \sigma\left(  \sigma-1\right)  -\left(
\beta-2\right)  \frac{\rho+\sigma}{2}\right]  u_{1}u_{2}=0\ .
\end{equation}

\end{itemize}

Since for our choice of the values of the parameters of the model,
\begin{align}
\lambda_{2}\left(  0\right)   &  =\frac{\sigma^{2}+1-\sqrt{\left(  \sigma
^{2}-1\right)  ^{2}+\left[  \frac{\left(  \rho+\sigma\right)  }{2}%
+\sigma\left(  \sigma-1\right)  \right]  ^{2}}}{2}<0\\
\lambda_{2}\left(  -\frac{\left(  \rho+\sigma\right)  }{2}\right)   &
=\frac{\sigma^{2}+1-\sqrt{\left(  \sigma^{2}-1\right)  ^{2}+\left[
\sigma\left(  \sigma-1\right)  -\left(  \beta-2\right)  \frac{\left(
\rho+\sigma\right)  }{2}\right]  ^{2}}}{2}<0
\end{align}
the intersection of $\mathcal{E}^{\prime}$ with the planes $z=q_{3}=0$ and
$z=q_{3}=\frac{\left(  \rho+\sigma\right)  }{2}$ are the straight lines. The
manifold in $R^{3}$ corresponding to the relative maxima of $C\left(
t\right)  $ is
\begin{align}
\Sigma &  :=\left\{  u\in\mathbb{R}^{3}:\dot{C}\left(  u\right)
=0\ ,\ \ddot{C}\left(  u\right)  \leq0\right\} \label{Sigma}\\
&  =\left\{  u\in\mathbb{R}^{3}:\left\{
\begin{array}
[c]{l}%
\sigma u_{1}^{2}+u_{2}^{2}+\beta\left(  u_{3}+\frac{\left(  \rho
+\sigma\right)  }{2}\right)  ^{2}=\frac{\beta\left(  \rho+\sigma\right)  ^{2}%
}{4}\\
\sigma^{2}u_{1}^{2}+u_{2}^{2}-\left[  \sigma\left(  \sigma-1\right)  +\left(
\beta-1\right)  \left(  u_{3}+\frac{\rho+\sigma}{2}\right)  +\frac{\rho
+\sigma}{2}\right]  u_{1}u_{2}+\\
+\beta^{2}\left(  u_{3}+\frac{\rho+\sigma}{2}\right)  \left(  u_{3}%
+\rho+\sigma\right)  \leq0
\end{array}
\right.  \right\}  \ .\nonumber
\end{align}
Since for our choice of the parameters
\begin{equation}
\frac{1}{\beta-1}\left[  \sigma\left(  \sigma-3\right)  +\beta\frac{\left(
\rho+\sigma\right)  }{2}\right]  >\rho+\sigma\ ,
\end{equation}
$\Sigma$ is composed by two closed surfaces in $R^{3},\Sigma_{+}$ and
$\Sigma_{-},$ such that $R\Sigma_{+}=\Sigma_{-}$ and intersecting only in the
critical point $c_{0}.$

By definition, $\forall u\in\mathcal{E},$ the vector $\dot{u}\left(  u\right)
$ is orthogonal to the vector $\nabla C\left(  u\right)  ,$ hence it belongs
to the plane spanned by $\nabla E\left(  u\right)  -\left(  \nabla
E\cdot\nabla C\right)  \left(  u\right)  \nabla C\left(  u\right)  $ and
$\left(  \nabla C\times\nabla E\right)  \left(  u\right)  ,$ where
\begin{equation}
\nabla C\times\nabla E=4\left(
\begin{array}
[c]{c}%
u_{2}\left[  \left(  \beta-1\right)  u_{3}+\beta\frac{\rho+\sigma}{2}\right]
\\
-u_{1}\left[  \left(  \beta-\sigma\right)  u_{3}+\beta\frac{\rho+\sigma}%
{2}\right] \\
-\left(  \sigma-1\right)  u_{1}u_{2}%
\end{array}
\right)
\end{equation}
and, since $C$ is a constant of motion for the Hamiltonian field
$v,$\linebreak$\forall u\in\mathcal{E},\ \left(  w\cdot\nabla C\right)
\left(  u\right)  =0.$

Moreover:

\begin{itemize}
\item $\left\vert \nabla E\right\vert \upharpoonleft_{\mathcal{E}},\left\vert
\nabla C\right\vert \upharpoonleft_{\mathcal{E}}$ and $\left\vert \nabla
C\times\nabla E\right\vert \upharpoonleft_{\mathcal{E}}$ are always different
from zero;

\item from (\ref{C'-E}) and (\ref{Sigma})%
\begin{equation}
\ddot{C}\left(  t\right)  =\left(  \dot{u}\cdot\nabla\dot{C}\right)  \left(
t\right)  =\left(  \dot{u}\cdot\nabla\left[  -2\left(  E-\beta\frac{\left(
\rho+\sigma\right)  ^{2}}{4}\right)  \right]  \right)  \left(  t\right)  \ ,
\end{equation}
hence, $\forall u\in\partial\Sigma,\ \dot{u}\left(  u\right)  $ is parallel to
$\nabla C\times\nabla E,$ that is tangent to $\Sigma.$
\end{itemize}

Therefore, $\dot{u}$ is transverse to $\Sigma\backslash\partial\Sigma$ and
since $\forall u\in\Sigma\backslash\partial\Sigma,$%
\begin{equation}
\ddot{C}\left(  u\right)  =\left(  \dot{u}\cdot\nabla\dot{C}\right)  \left(
u\right)  =-2\left(  \dot{u}\cdot\nabla E\right)  <0\;\Longrightarrow\;\left(
\dot{u}\cdot\nabla E\right)  >0\ ,
\end{equation}
the direction of $\dot{u}\left(  u\right)  $ points outward the bounded subset
of $\mathbb{R}^{3},$%
\begin{equation}
\left\{  u\in\mathbb{R}^{3}:E\left(  u\right)  \leq\beta\frac{\left(
\rho+\sigma\right)  ^{2}}{4}\right\}  \ .
\end{equation}

\subsubsection{Parametrization of $\Sigma$}

If $r\in\left(  0,\rho+\sigma\right)  ,\ \gamma:=rB\cap\mathcal{E}$ is a
regular closed curve. Therefore, we can parametrize $\Sigma_{+}$ choosing an
appropriate arc of $\gamma$ as coordinate curve of the parametrization, that
is there exist an open regular subset $\Omega$ of $\mathbb{R}^{2}$ and a
map\linebreak$b^{+}\in C^{1}\left(  \Omega,\mathbb{R}^{3}\right)  \cap
C\left(  \overline{\Omega},\mathbb{R}^{3}\right)  $ such that%
\begin{equation}
\left\{
\begin{array}
[c]{c}%
u_{1}=b_{1}^{+}\left(  y,z\right) \\
u_{2}=b_{2}^{+}\left(  y,z\right) \\
u_{3}=b_{3}^{+}\left(  y,z\right)
\end{array}
\right.  \ ,\ \left(  y,z\right)  \in\Omega\ . \label{b+}%
\end{equation}
Moreover, we can choose the parametrization such that $z=r^{2},$ therefore the
coordinate curves $b_{z}^{+}\left(  y\right)  $ satisfy the equations
\begin{equation}
\left\{
\begin{array}
[c]{c}%
C\left(  b_{z}^{+}\left(  y\right)  \right)  =z\\
\dot{C}\left(  b_{z}^{+}\left(  y\right)  \right)  =0
\end{array}
\right.  \ .
\end{equation}
We remark that the tangent field to $\gamma$ is parallel to $\nabla
C\times\nabla E,$ while, if $\zeta$ denotes the coordinate curve $b_{y}%
^{+}\left(  z\right)  ,$ the tangent field to $\zeta$ is parallel to $\nabla
E\times\left(  \nabla C\times\nabla E\right)  .$

Similar arguments also hold for $\Sigma_{-}.$ Furthermore,
\begin{equation}
\overline{\Omega}\ni\left(  y,z\right)  \longmapsto b^{-}\left(  y,z\right)
:=Rb^{+}\left(  y,z\right)  \in\mathbb{R}^{3}%
\end{equation}
is easily seen to be a parametrization of $\Sigma_{-}$ sharing the same
properties of $b^{+}.$

\subsubsection{Return maps on $\Sigma$}

The evolution of the system maps in itself the ball $\beta\left(  \rho
+\sigma\right)  B,\ \mathcal{E}\subset\beta\left(  \rho+\sigma\right)  B$ and the
velocity field is transverse to $\Sigma\backslash\partial\Sigma$ and
$\mathcal{E}\backslash\Sigma.$

For our choice of parameters $\rho,\beta$ and $\sigma,$ it has been shown in
\cite{T} that there exist periodic orbits crossing a two-dimensional compact
domain $\Delta$ contained in the plane $\pi:=\left\{  u\in\mathbb{R}^{3}%
:u_{3}=1-\left(  \rho+\sigma\right)  \right\}  ,$ which is also intersected by
the stable manifold of the system $W_{o}^{s}$ along some curve $\Gamma_{0}.$
Furthermore, the first eight shortest periodic orbits have been rigorously
found in \cite{GT}. Notice that by symmetry if $\varphi\left(  t,u\right)
,\ u\in\Sigma_{+},$ is a periodic orbit, $R\varphi\left(  t,u\right)  $ is
also a periodic orbit and either $R\varphi\left(  t,u\right)  =\varphi\left(
t,u\right)  $ or $R\varphi\left(  t,u\right)  =\varphi\left(  t,Ru\right)  ,$
where $Ru\in\Sigma_{-};$ that is periodic orbits are either symmetric or
appear in couples whose elements are mapped one into another by $R,$ as
already remarked in \cite{Sp}.

Since $\Delta$ is easily seen to be contained in
\begin{equation}
\left\{  u\in\mathbb{R}^{3}:\dot{C}\left(  u\right)  \leq0\right\}  \cap\pi\ ,
\end{equation}
these periodic orbits then, necessarily cross $\Sigma$ which is also possibly
intersected by $W_{o}^{s}$ along some curve $\Gamma$ lying in the half-space
\begin{equation}
\left\{  u\in\mathbb{R}^{3}:u_{3}\geq1-\left(  \rho+\sigma\right)  \right\}
\ .
\end{equation}

Therefore, if $u_{0}\in\Sigma\backslash\Gamma$ lies on a periodic orbit of
period $t_{0},$ there exists an open neighborhood $N\ni u_{0}$ and a
$C^{1}\left(  N,\mathbb{R}\right)  $ map $\tau$ such that $\tau\left(
u_{0}\right)  =t_{0}$ and $\varphi_{\tau\left(  u\right)  }\left(  u\right)
\in\Sigma$ for any $u\in N.$ Then,
\begin{equation}
N\cap\Sigma\backslash\Gamma\ni u\longmapsto P_{\Sigma}\left(  u\right)
:=\varphi_{\tau\left(  u\right)  }\left(  u\right)  \in\Sigma\ .
\end{equation}
Moreover, it has been proved in \cite{T} that $\Delta\backslash\Gamma_{0}$ is
forward invariant under the return map on $\pi$ and that on $\Delta$ there
estists a forward invariant unstable cone field. These properties are also
shared by a compact subset $\Delta^{\prime}\subset\Sigma$ such that any open
subset of $\Delta^{\prime}$ is diffeomorphic to a open subset of $\Delta.$
Hence, $P_{\Sigma}$ admits an invariant stable foliation with $C^{1+\iota
},\ \iota\in\left(  0,1\right)  $ leaves.

\subsubsection{Construction of the map $T$}

Let $P^{\left(  \pm\right)  }:=P_{\Sigma_{\pm}}.$ By the parametrization
previously introduced for $\Sigma_{+},$ there exists an open subset
$\Omega^{\prime}\subset\Omega\backslash\Gamma^{\prime},$ with $\Gamma^{\prime
}:=\left(  b^{+}\right)  ^{-1}\left(  \Gamma\right)  ,$ and a $C^{1}\left(
\Omega^{\prime},\mathbb{R}^{2}\right)  $ map
\begin{equation}
\Omega^{\prime}\ni\left(  y,z\right)  \longmapsto S\left(  y,z\right)
\in\Omega^{\prime}\ .
\end{equation}
such that, $\forall\left(  y,z\right)  \in\Omega^{\prime},$%
\begin{equation}
\left(  b^{+}\circ S\right)  \left(  y,z\right)  :=\left(  P^{\left(
+\right)  }\circ b^{+}\right)  \left(  y,z\right)  \ ,
\end{equation}
Furthermore,
\begin{equation}
G\left(  y,z\right)  :=\left(  \dot{C}\circ b^{+}\circ S\right)  \left(
y,z\right)  =0\ .
\end{equation}
Let $S_{1},S_{2}$ be respectively the first and the second component of $S.$
Since $b^{+}$ is a diffeomorphism and the components of $\nabla E=\nabla
\dot{C}$ are different from zero on $\Sigma_{+},\ \forall\left(  y,z\right)
\in\Omega^{\prime},\ \partial_{y}G\left(  y,z\right)  \neq0.$ Thus, by the
implicit function theorem, $\forall\left(  y_{0},z_{0}\right)  \in
\Omega^{\prime},$ there exist two open interval $\left(  y_{1},y_{2}\right)
,\ \left(  z_{1},z_{2}\right)  $ such that $\left(  y_{0},z_{0}\right)
\in\left(  y_{1},y_{2}\right)  \times\left(  z_{1},z_{2}\right)
\subseteq\Omega^{\prime}$ and a unique $C^{1}\left(  \left(  z_{1}%
,z_{2}\right)  \right)  $ map
\begin{equation}
\left(  z_{1},z_{2}\right)  \ni z\longmapsto y:=U\left(  z\right)  \in\left(
y_{1},y_{2}\right)
\end{equation}
such that, $\forall z\in\left(  z_{1},z_{2}\right)  ,\ G\left(  U\left(
z\right)  ,z\right)  =0$ and, $\forall\left(  y,z\right)  \in\left(
y_{1},y_{2}\right)  \times\left(  z_{1},z_{2}\right)  $ such that $y\neq
U\left(  z\right)  ,\ G\left(  y,z\right)  \neq0.$

Therefore, let
\begin{equation}
\left(  z_{1},z_{2}\right)  \ni z\longmapsto V\left(  z\right)  :=S_{2}\left(
U\left(  z\right)  ,z\right)  \in\left(  z_{1},z_{2}\right)  \ .
\end{equation}

Notice that, since $b^{+}\in C^{1}\left(  \Omega\right)  ,\ S=\left(
b^{+}\right)  ^{-1}\circ P^{\left(  +\right)  }\circ b^{+}$ is $C^{1}\left(
\Omega^{\prime}\right)  $ if and only if $P^{\left(  +\right)  }$ is, and so
are $U$ and $V.$

Moreover, by symmetry,
\begin{equation}
b^{-}\circ S=Rb^{+}\circ S=RP^{\left(  +\right)  }\circ b^{+}=RP^{\left(
+\right)  }\circ Rb^{-}=P^{\left(  -\right)  }\circ b^{-}\ .
\end{equation}
Hence $S=\left(  b^{-}\right)  ^{-1}\circ P^{\left(  -\right)  }\circ b^{-}.$

Clearly, $\left[  z_{1},z_{2}\right]  \subseteq\left[  m\vee\left(  r^{\ast
}\right)  ^{2},\rho+\sigma\right]  ,$ with $r^{\ast}:=\inf\{r>0:rB\cap
\Sigma\neq\varnothing\}.$

Let $u_{+}\in\Sigma_{+},\left(  y_{+},z_{+}\right)  \in\overline{\Omega}$ be
such that $P^{\left(  +\right)  }\left(  u_{+}\right)  =c_{0}$ and\linebreak%
$b^{+}\left(  y_{+},z_{+}\right)  =u_{+}.$ Setting
\begin{equation}
\left[  z_{1},z_{2}\right]  \ni z\longmapsto X\left(  z\right)  :=\frac
{z-z_{1}}{z_{2}-z_{1}}\in\left[  0,1\right]  \ ,
\end{equation}
we define
\begin{equation}
\left[  0,1\right]  \ni s\longmapsto T\left(  s\right)  :=X\circ V\circ
X^{-1}\left(  s\right)  \in\left[  0,1\right]  \ .
\end{equation}

Hence, by construction, $T$ is a $C^{1}\left(  \left(  0,1\right)
\backslash\left\{  x_{0}\right\}  \right)  $ map, where $x_{0}:=X\left(
z_{+}\right)  ,$ and, since there exits $\iota\in\left(  0,1\right)  $ such
that $P_{\Sigma}$ admits an invariant stable foliation with $C^{1+\iota}$
leaves, then $T$ is also $C^{1+\iota}\left(  \left(  0,1\right)
\backslash\left\{  x_{0}\right\}  \right)  .$

\section{The invariant density for the evolution under $T$}

In this section we compute the density of the unique (by ergodicity)
absolutely continuous invariant measure for the map $T$ and we prove its
statistical stability. For the construction of the density we use the
techniques recently introduced in the paper \cite{CHMV} (see also \cite{BH}
for results related to similar maps), which dealt with Lorenz maps admitting
indifferent fixed points besides points with unbounded derivative. For the
statistical stability we will follow the recent article \cite{BV}, but with
some new substantial improvements. The techniques used in \cite{CHMV} turned
around Young's towers \cite{LSY} and, substantiated by a careful analysis of
the distortion, led to a detailed study of the density of the absolutely
continuous invariant measure, of the recurrence properties of the dynamics and
of limit theorems for H\"{o}lder continuous observables. This analysis could
in particular be carried over when the map has a derivative larger than one at
the fixed point, as in the case we are going to treat, but possibly smaller
than one in some other point (see below). We remind that whenever the Lorenz
map is a Markov expanding map with finite derivative, it could be investigated
with the spectral techniques of Keller \cite{K}. Young's towers are useful
when the map looses the Markov property, but preserves points with unbounded
derivative. This has been analysed in \cite{KDO}, see also \cite{OHL} when
there are critical points too. Our main effort will be in investigating the
smoothness of the density. We will show that such a density is Lipschitz
continuous on the whole unit interval but in one point. The argument we
produce is a strong improvement with respect to the result achieved in
\cite{CHMV} (and applies to it as well), where we solely proved the Lipschitz
continuity on countably many intervals partitioning the unit interval. We
stress that, as far as we know,this is the first result where the smoothness
of the density for Lorenz like maps is explicitly exhibited.\newline

\textbf{Notations:} With $a_{n}\approx b_{n}$ we mean that there exists a
constant $C\geq1$ such that $C^{-1}b_{n}\leq a_{n}\leq Cb_{n}$ for all
$n\geq1;$ with $a_{n}\sim b_{n}$ we mean that $\lim_{n\rightarrow\infty}
\frac{a_{n}}{b_{n}}=1.$ We will also use the symbols "$O$" and "$o$" in the
usual sense. \newline

The analysis we perform in this section applies to a large class of
Lorenz-like maps which includes in particular those whose behavior is given by
the theoretical arguments of the preceding section and by the numerical
investigations of the paper \cite{PM}. The map $T$ (fig.1) has a left and a
right convex branches around the point $0<x_{0}<1;$ the left branch is
monotonically increasing and uniformly expanding even at the fixed point $0,$
while the right one is monotonically decreasing with the derivative bounded
from below by a constant less than one; at the cusp, located at $x_{0},$ the
left and right derivative blow up to infinity. Both branches are onto $[0,1]$
and this makes the map Markovian. Moreover, we recall that our map is $C^{1}$
on $[0,1]\backslash\{x_{0}\}$ and $C^{1+\iota},$\linebreak$\iota\in\left(
0,1\right)  ,$ on $(0,x_{0})\cup(x_{0},1).$

\begin{figure}[htbp]
\centering
\resizebox{0.75\textwidth}{!}{%
\includegraphics{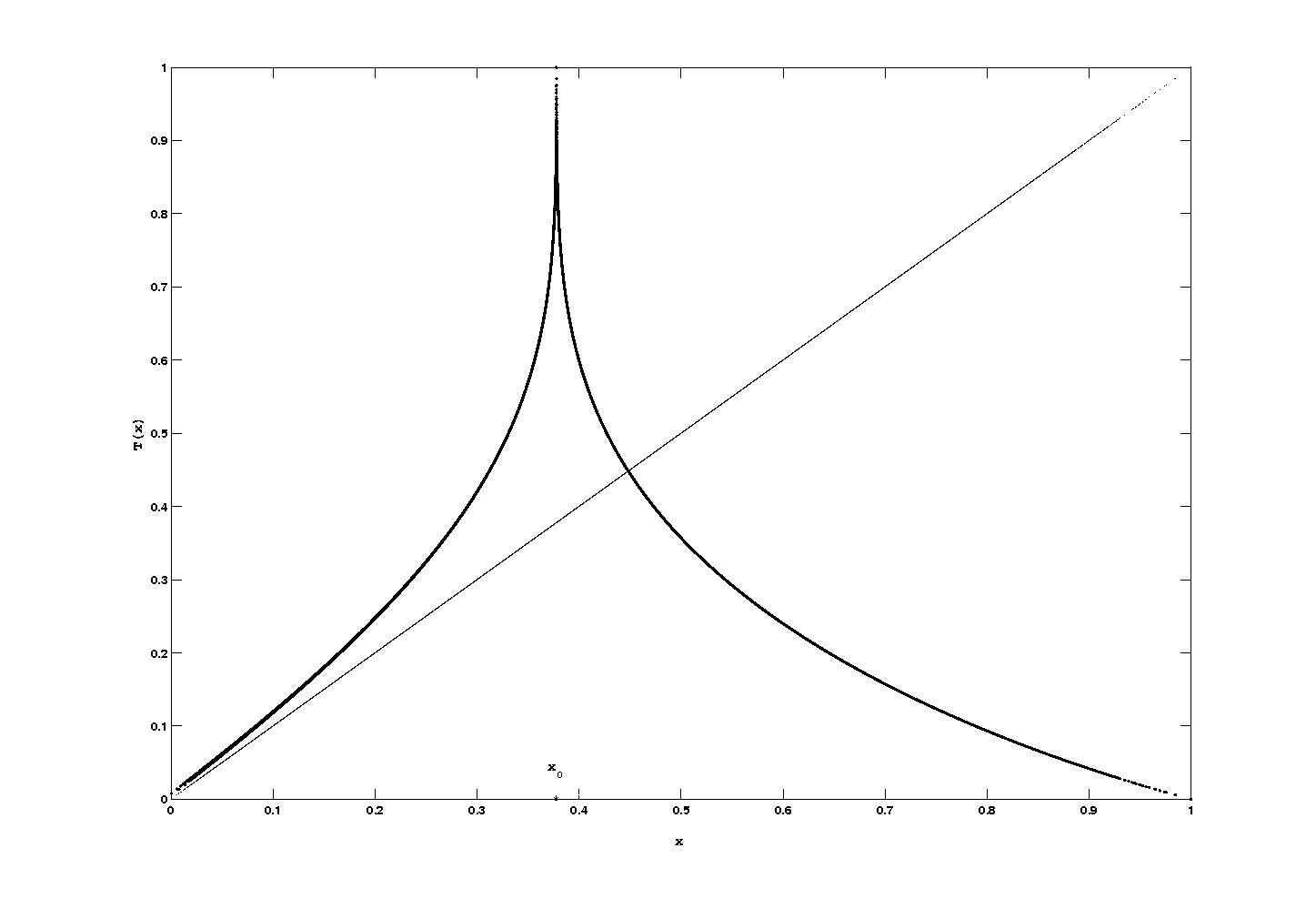}
}
\caption{Normalized Lorenz cusp map for the Casimir maxima}
\label{fig:1}       
\end{figure}

The local behaviors are ($c$ will denote a positive constant which could take
different values from one formula to another):
\begin{align}
&  \left\{
\begin{array}
[c]{l}%
T(x)=\alpha^{\prime}x+\beta^{\prime}x^{1+\psi}+o(x^{1+\psi});\ x\rightarrow
0^{+}\\
DT(x)=\alpha^{\prime}+cx^{\psi}+o(x^{\psi}),\ \alpha^{\prime}>1;\ \beta
^{\prime}>0;\ \psi>1
\end{array}
\right.  \ ,\label{T_DT1}\\
&  \left\{
\begin{array}
[c]{l}%
T(x)=\alpha(1-x)+\tilde{\beta}(1-x)^{1+\kappa}+o((1-x)^{1+\kappa
});\ x\rightarrow1^{-}\\
DT(x)=-\alpha-c(1-x)^{\kappa}+o((1-x)^{\kappa}),\ 0<\alpha<1,\ \tilde{\beta
}>0,\ \kappa>1
\end{array}
\right.  \ ,\label{T_DT2}\\
&  \left\{
\begin{array}
[c]{l}%
T(x)=1-A^{\prime}(x_{0}-x)^{B^{\prime}}+o((x_{0}-x)^{B^{\prime}}%
);\ x\rightarrow x_{0}^{-},\ A^{\prime}>0\\
DT(x)=c(x_{0}-x)^{B^{\prime}-1}+o((x_{0}-x)^{B^{\prime}-1}),\ 0<B^{\prime}<1
\end{array}
\right.  \ ,\label{T_DT3}\\
&  \left\{
\begin{array}
[c]{l}%
T(x)=1-A(x-x_{0})^{B}+o((x-x_{0})^{B});\ x\rightarrow x_{0}^{+},\ A>0\\
DT(x)=-c(x-x_{0})^{B-1}+o((x-x_{0})^{B-1}),\ 0<B<1
\end{array}
\right.  \ . \label{T_DT4}%
\end{align}

We set $B^{\ast}:=\max(B,B^{\prime});$ moreover we set $T_{1}$ (resp. $T_{2}$)
the restriction of $T$ to $[0,x_{0}]$ (resp. to $[x_{0},1]$). A key role is
played by the preimages of $x_{0}$ since they will give the sets where we will
induce with the first return map; so we set: $a_{0}:=T_{2}^{-1}x_{0};$%
\ $a_{0}^{\prime}:=T_{1}^{-1}x_{0};$\ $a_{p}^{\prime}=T_{1}^{-p}a_{0}^{\prime
};$\ $a_{p}=T_{2}^{-1}T_{1}^{-(p-1)}a_{0}^{\prime},$\ $p\geq1.$ We also define
the sequences $\{b_{p}\}_{p\geq1}\subset(x_{0},a_{0})$ and $\{b_{p}^{\prime
}\}_{p\geq1}\subset(a_{0}^{\prime},x_{0})$ as $Tb_{p}^{\prime}=Tb_{p}%
=a_{p-1}.$ The idea is now to induce on some domain $I$ and to replace the
action of $T$ on $I$ with that of the first return map $T_{I}$ into $I.$ We
will see that the systems $(I,T_{I})$ will admit an absolutely continuous
invariant measure $\mu_{I}$ which is in particular equivalent to the Lebesgue
measure with a density $\rho_{I}$ bounded from below and from above. There
will be finally a link between the induced measure $\mu_{I}$ and the
absolutely continuous invariant measure $\mu$ on the interval, which will
allows us to get some informations on the density $\rho$ of $\mu.$ The
principal set where we will choose to induce is the open interval
$I=(a_{0}^{\prime},a_{0})\backslash\{x_{0}\}.$ The subsets $Z_{p}\subset I$
with first return time $p$ will have the form
\begin{align}
Z_{1}  &  =(a_{0}^{\prime},b_{1}^{\prime})\cup(b_{1},a_{0})\label{SI}\\
Z_{p}  &  =(b_{p-1}^{\prime},b_{p}^{\prime})\cup(b_{p},b_{p-1})\quad p>1\ .
\label{SII}%
\end{align}
We will also induce over the open sets $(a_{n}^{\prime},a_{n-1}^{\prime})$ and
$(a_{n},a_{n+1}),n>1,$ simply denoted in the following as the rectangles
$I_{n}.$ In order to apply the techniques of \cite{CHMV}, we have to show that
the induced maps are aperiodic uniformly expanding Markov maps with bounded
distortion on each set with prescribed return time. On the sets $I_{n}$ the
first return map $T_{I_{n}}$ is Bernoulli, while the aperiodicity condition on
$I$ follows easily by the inspection of the graph of the first return map
$T_{I}:I\rightarrow I$ showing that it maps: $(a_{0}^{\prime},b_{1}^{\prime})$
onto $(x_{0},a_{0});$ the intervals $(b_{l}^{\prime},b_{l+1}^{\prime}%
),\ l\geq1,$ onto the interval $(a_{0}^{\prime},x_{0})$ and $(b_{1},a_{0})$
onto $(x_{0},a_{0}).$ Finally, $T_{I}$ sends the intervals $(b_{l+1}%
,b_{l}),\ l\geq1$ onto $(a_{0}^{\prime},x_{0}).$ Bounds on the distortion of
the first return map on $I$ and on the $I_{n}$ can be proved exactly in the
same way as in Proposition 3 of \cite{CHMV} (we defer to it for the details)
provided we show that the first return maps are uniformly
expanding\footnote{The Lorenz-like map considered in \cite{CHMV} was $C^{2}$
outside the boundary points and the cusp; our map is instead $C^{1+\iota}.$
This will not change the proof of the distortion in \cite{CHMV} and all the
statistical properties which follow from it. As a matter of fact, in the
initial formula (5) in \cite{CHMV}, we have to replace the term $\left\vert
\frac{D^{2}T(\xi)}{DT(\xi)}\right\vert \left\vert T^{q}(x)-T^{q}(y)\right\vert
$ with $\frac{C_{h}}{\left\vert DT(\xi)\right\vert }\left\vert T^{q}%
(x)-T^{q}(y)\right\vert ^{\iota},$ where $C_{h}$ is the H\"{o}lder constant
larger than $0$ depending only on $T$ and $\xi$ is a point between the
iterates $T^{q}(x),\ T^{q}(y).$ The only delicate point where the $C^{1+\iota
}$ assumption could give problems is the summability of the series at point
(i) in the statement of Lemma 4 in \cite{CHMV}. The general term of this
series will be of the form (we adapt to our case): $(a_{n+1}-a_{n})^{\iota}.$
In \cite{CHMV}, due to the presence of the indifferent fixed point, the term
$(a_{n+1}-a_{n})$ decays polynomially like $n^{-\kappa},$ say, where
$\kappa>1$ depends on the map. In order to guarantee the aforecited
summability property we have therefore to ask an additional assumption on
$\kappa,$ namely $\kappa>\iota^{-1}$. We do not have such a constraint in our
case since the length $(a_{n+1}-a_{n})$ decays exponentially fast.}. The proof
of this fact is given in the next Lemma and it requires a few assumptions
which can be checked numerically with a finite number of steps and by a direct
inspection of the graph of $T.$ With abuse of language we will say that the
derivative is larger than $1$ if its absolute value is larger than $1.$

\begin{lemma}
Let us suppose that in addition to (\ref{T_DT1})-(\ref{T_DT4}) the map $T$
satisfies the assumptions:

\begin{itemize}
\item[(i)] $d_{(1,0)}:=\inf_{x\in(b_{1},a_{0})}|DT(x)|>1;$

\item[(ii)] $|DT(b_{1})|\geq DT(a_{0}^{\prime});$

\item[(iii)] $|DT(a_{p-1})|DT(a_{p-2}^{\prime})\cdots DT(a_{1}^{\prime
})DT(a_{0}^{\prime})>\alpha^{\prime\prime},\ \forall p\leq p^{\ast
}:=\left\lfloor 1+\frac{\log(\alpha^{\prime\prime}\alpha^{-1})}{\log
\alpha^{\prime}}\right\rfloor ,$ for $1<\alpha^{\prime\prime}\leq
d_{(1,0)}\wedge\alpha^{\prime}.$
\end{itemize}

Then the first return time maps $T_{I}$ and $T_{I_{n}},n>1$ have the
derivative uniformly bounded below away from $\alpha^{\prime\prime}.$
\end{lemma}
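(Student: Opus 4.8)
The plan is to estimate the derivative of the first return map branch by branch, using the Markov structure of $T$ recorded just above: the itinerary of any point before its first return to $I=(a_0',a_0)\setminus\{x_0\}$ is forced to pass through the ``funnel'' near the fixed point $0$, where each iterate of $T_1$ multiplies the derivative by at least $\alpha'>1$ (by \eqref{T_DT1}), and the return is completed either by one application of $T_2$ near $1$ (derivative $\geq\alpha$ in absolute value, by \eqref{T_DT2}) or by landing near the cusp $x_0$, where the derivative is large (by \eqref{T_DT3}--\eqref{T_DT4}). So I would organize the argument as a case analysis on the return time $p$ and on which of the two half-intervals of $Z_p$ (or of $I_n$) the point lies in.

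First I would treat $T_{I_n}$, $n>1$: since this map is Bernoulli (as noted before the Lemma), each branch is a composition $T_2\circ T_1^{k}$ or $T_1^{k}$ for suitable $k\geq 0$, mapping onto all of $I_n$; the chain rule gives $|DT_{I_n}|\geq \alpha'^{\,k}$ on branches that only use $T_1$, and on a branch ending with $T_2$ the last factor is $\geq\alpha$ but it is preceded by at least one factor coming from the expanding cusp preimages, which is where I would invoke a crude lower bound $\geq 1$ coming from a direct inspection of the graph near $a_0,a_0'$; alternatively one notes these $I_n$ sit in the uniformly expanding region and quotes the numerical assumption (i) at the appropriate scale. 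Then, for $T_I$: on $(a_0',b_1')$ and on $(b_1,a_0)$ the return time is $1$, so $|DT_I|=|DT|$ there, which is $\geq d_{(1,0)}>1$ on $(b_1,a_0)$ by (i), and $\geq DT(a_0')$ on $(a_0',b_1')$ by monotonicity together with (ii); since $\alpha''\leq d_{(1,0)}\wedge\alpha'$ and $DT(a_0')\geq \alpha'$ (again by \eqref{T_DT1} and monotonicity of $DT$ on $(0,x_0)$), both cases give $|DT_I|>\alpha''$. For the sets $Z_p$ with $p>1$: a point in $(b_{p-1}',b_p')$ or $(b_p,b_{p-1})$ first maps under one step to a point near $a_{p-1}$, then spins down the funnel $T_1^{\,p-1}$ before returning; the chain rule produces exactly the product $|DT(a_{p-1})|DT(a_{p-2}')\cdots DT(a_0')$ up to the expanding first factor, so assumption (iii) gives $|DT_{T_I}|>\alpha''$ for all $p\le p^*$, while for $p>p^*$ the accumulated factor $\alpha'^{\,p-1}$ from the funnel alone already exceeds $\alpha''\alpha^{-1}\cdot\alpha=\alpha''$ by the definition of $p^*$.

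The delicate point — the one I expect to be the main obstacle — is the bookkeeping of the ``first factor'' in each composition: the derivative of $T$ at the point of $Z_p$ (or of $I_n$) itself, before it enters the funnel. Near the cusp $x_0$ this derivative blows up, which only helps; but one must check it is never the troublesome factor, i.e. that it is bounded below by something $\geq 1$ uniformly over each $Z_p$, and this is exactly what forces the somewhat awkward finite numerical checks (i)--(iii) rather than a clean closed-form estimate. Once that is pinned down — essentially by combining the monotonicity of $DT$ on each convex branch with the explicit local forms \eqref{T_DT1}--\eqref{T_DT4} and finitely many evaluations — the chain rule does the rest, and I would then remark that these same bounds are what is needed to run the distortion estimate of Proposition 3 of \cite{CHMV} verbatim (with the $C^{1+\iota}$ modification already described in the footnote), so that the induced systems $(I,T_I)$ and $(I_n,T_{I_n})$ are aperiodic uniformly expanding Markov maps with bounded distortion, as required for the next step.
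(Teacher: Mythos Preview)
Your overall strategy matches the paper's: trace itineraries, use the chain rule, invoke (iii) for small $p$ and the crude bound $\alpha(\alpha')^{p-1}>\alpha''$ for $p>p^*$. But several concrete details are off and one of them is exactly the ``delicate point'' you flag without resolving.

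First, the itinerary for $Z_p$, $p\ge 2$, is not ``one step to near $a_{p-1}$, then $T_1^{p-1}$''. From $(b_p,b_{p-1})$ (or $(b_{p-1}',b_p')$) one step of $T$ lands in $(a_{p-2},a_{p-1})\subset(a_0,1)$, a \emph{second} step of $T_2$ lands in $(a_{p-2}',a_{p-3}')$, and only then $T_1^{p-2}$ funnels down to $(a_0',x_0)$. The infima of the successive factors give the product $|DT(b_{p-1})|\,|DT(a_{p-1})|\,DT(a_{p-2}')\cdots DT(a_1')$ on the right piece and the same with $DT(b_{p-1}')$ in place of $|DT(b_{p-1})|$ on the left piece. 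Note this has $p$ factors, not $p+1$; the product in (iii) is obtained by \emph{replacing} the first factor by $DT(a_0')$, not by multiplying by it.

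Second, you misplace assumption (ii). On $(a_0',b_1')$ you do not need (ii) at all: this lies in the left branch where $DT_1\ge\alpha'>\alpha''$ directly. Where (ii) is actually needed is precisely the first factor on the \emph{right} pieces $(b_p,b_{p-1})$: since $|DT_2|$ decreases from $+\infty$ at $x_0$ down to $\alpha<1$ at $1$, the value $|DT(b_{p-1})|$ could in principle be $<1$. Monotonicity gives $|DT(b_{p-1})|\ge|DT(b_1)|$, and (ii) then gives $|DT(b_1)|\ge DT(a_0')$, so the first factor dominates the $DT(a_0')$ appearing in (iii), and (iii) (for $p\le p^*$) or $\alpha(\alpha')^{p-1}>\alpha''$ (for $p>p^*$) finishes the job. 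This is the resolution of your ``delicate point'', and it is not optional.

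Third, your description of $T_{I_n}$ is wrong: the branches are not of the form $T_2\circ T_1^k$ or $T_1^k$. A point in $I_n$ first funnels down through the $a'$-ladder into $I$, then from $I$ can go anywhere; the first return to $I_n$ therefore has a complicated combinatorics. The paper handles this not by enumerating branches but by observing (via the grammar) that any return itinerary of length $q>n$ contains a ``complete path'' of the type already controlled by condition (\ref{C1}), or else spends time in $(x_0,a_0)$ where (i) applies; either way the accumulated derivative exceeds $\alpha''$.
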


\begin{proof}
We give the proof for $I$ and we generalize after to all the $I_{n}.$ We
represent with an arrow $"\rightarrow"$ the evolution under $T$ of a subset
$Z_{p}\subset I,\ p\geq1,$ given in (\ref{SI}) and (\ref{SII}). Consequently
$(b_{1},a_{0})\rightarrow(x_{0},a_{0})$ and $(a_{0}^{\prime},b_{1}^{\prime
})\rightarrow(x_{0},a_{0}).$ In the latter case the derivative of the map
coincides with that of $T$ and is larger than $1,$ since $T_{1}$ has
derivative larger than $\alpha^{\prime}>1.$ The former case follows by
condition (i). For $p>1$ we have:
\begin{equation}
\left\{
\begin{array}
[c]{l}%
\left(  b_{2},b_{1}\right)  \rightarrow\left(  a_{0},a_{1}\right)
\rightarrow\left(  a_{0}^{\prime},x_{0}\right)  , \ p=2\\
(b_{p},b_{p-1})\rightarrow(a_{p-2},a_{p-1})\rightarrow(a_{p-2}^{\prime
},a_{p-3}^{\prime})\rightarrow(a_{p-3}^{\prime},a_{p-4}^{\prime}%
)\rightarrow\cdots\\
\rightarrow(a_{1}^{\prime},a_{0}^{\prime})\rightarrow(a_{0}^{\prime}%
,x_{0})\quad p\geq3
\end{array}
\right.
\end{equation}
and
\begin{equation}
\left\{
\begin{array}
[c]{l}%
\left(  b_{2}^{\prime},b_{1}^{\prime}\right)  \rightarrow\left(  a_{0}%
,a_{1}\right)  \rightarrow\left(  a_{0}^{\prime},x_{0}\right)  , \ p=2\\
(b_{p-1}^{\prime},b_{p}^{\prime})\rightarrow(a_{p-2},a_{p-1})\rightarrow
(a_{p-2}^{\prime},a_{p-3}^{\prime})\rightarrow(a_{p-3}^{\prime},a_{p-4}%
^{\prime})\rightarrow\cdots\\
\rightarrow(a_{1}^{\prime},a_{0}^{\prime})\rightarrow(a_{0}^{\prime}%
,x_{0})\quad p\geq3
\end{array}
\right.
\end{equation}
In order to get that the derivative of $T^{p}$ is larger than one we need:

\begin{itemize}
\item in the first case
\begin{equation}
\left\vert DT(b_{p-1})DT(a_{p-1})\right\vert DT(a_{p-2}^{\prime})\cdots
DT(a_{2}^{\prime})DT(a_{1}^{\prime})>1\ ; \label{(*)}%
\end{equation}

\item in the second case
\begin{equation}
DT(b_{p-1}^{\prime})\left\vert DT(a_{p-1})\right\vert DT(a_{p-2}^{\prime
})\cdots DT(a_{2}^{\prime})DT(a_{1}^{\prime})>1\ . \label{(**)}%
\end{equation}

\end{itemize}

Let us suppose now that we have, for $p>1,$
\begin{equation}
\left\vert DT(a_{p-1})\right\vert DT(a_{p-2}^{\prime})\cdots DT(a_{1}^{\prime
})DT(a_{0}^{\prime})>\alpha^{\prime\prime}\ . \label{C1}%
\end{equation}
If this condition holds, then the inequality (\ref{(**)}) follows too with the
same uniform (in $p$) bound $\alpha^{\prime\prime},$ since, by monotonicity of
the first derivative, $DT(b_{p-1}^{\prime})>DT(a_{0}^{\prime}).$ In order to
satisfy the inequality (\ref{(*)}) with a lower bound given by $\alpha
^{\prime\prime},$ by assuming again (\ref{C1}), it will be sufficient to show
that $\left\vert DT(b_{p-1})\right\vert \geq DT(a_{0}^{\prime}),$ which, by
monotonicity, is implied by $|DT(b_{1})|\ge DT(a^{\prime}_{0})$ and this
follows from assumption (ii). Therefore, we are left with the proof of the
validity of condition (\ref{C1}). By condition (iii) this holds true for
$p\leq p^{\ast}.$ Moreover, since the points $a_{p-2}^{\prime},a_{p-3}%
^{\prime},\dots,a_{0}^{\prime}$ lies on the left of $x_{0},$ all the $(p-1)$
derivatives in the block $DT(a_{p-2}^{\prime})\cdots DT(a_{2}^{\prime
})DT(a_{0}^{\prime}))$ are larger $\alpha^{\prime}.$ On the other hand, the
derivative in $a_{p-1}$ is surely larger than $\alpha.$ Hence, (\ref{C1})
holds for all $p$ such that $\alpha\left(  \alpha^{\prime}\right)
^{p-1}>\alpha^{\prime\prime}.$\newline Now we return to the rectangles in
$I_{n}.$ Let us first call \emph{complete path} the graphs given above
starting respectively from $(a_{p-1},a_{p}),\ p>1,$ and ending in
$(a_{0}^{\prime},x_{0})$ and starting from $(a_{p}^{\prime},a_{p-1}^{\prime
}),\ p>1$ and ending in $(a_{0}^{\prime},x_{0}).$ It is easy to check, by
looking at the grammar\footnote{Let us give the coding for the map $T$ with
the grammar which we invoked above. To use a coherent notation we will
redefine $a_{-0}\equiv a_{0}^{\prime};$ $a_{-p}\equiv a_{p}^{\prime}%
=T_{1}^{-p}a_{0}^{\prime}\ ,\ p\geq1.$ We associate with each point
$x\in\lbrack0,1]\backslash\chi,$ where $\chi=\cup_{i\geq0}T^{-i}\{x_{0}\},$
the unique coding $x=(\omega_{0},\omega_{1},\dots,\omega_{n},\dots
),\ \omega_{l}\in\mathbb{Z},$ where (from now on $n$ will denote a positive
integer larger than $1$), $\omega_{l}=n$ iff $T^{l}x\in(a_{n-1},a_{n}%
);\ \omega_{l}=-n$ iff $T^{l}x\in(a_{-n},a_{-(n-1)});$ $\omega_{l}=0$ iff
$T^{l}x\in I.$ The grammar is the following (the formal symbol $-0$ must be
intended as $0$) :%
\begin{align*}
\omega_{i}  &  =n>0\Rightarrow\omega_{i+1}=-n\ ;\ \omega_{i}=-n\Rightarrow
\omega_{i+1}=-(n-1)\\
\newline\omega_{i}  &  =0\Rightarrow\omega_{i+1}=n\geq0\ (\text{any }n)
\end{align*}
} given by the arrows, that any subset of the rectangles in $I_{n}$ with first
return time $q>n$ will contain points whose trajectory follows a complete
path, or spends some time in $(x_{0},a_{0}).$ In any case, and by the
condition (\ref{C1}) whose validity has been checked above, the derivative
$DT^{q}$ will be strictly larger than $\alpha^{\prime\prime}.$
\end{proof}

\begin{remark}
The assumption (i)-(iii) in the previous Lemma are easily verified for the map
investigated in \cite{PM}. In particular, with the values $\alpha=0.4603$ and
$\alpha^{\prime}=1.113$ associated to the map and with $\alpha^{\prime\prime
}\sim1.01,$ the inequality (iii) is verified for $p\geq9;$ hence we have only
to check (\ref{C1}) for $1<p\leq8$ and this has been done, and confirmed, by a
direct easy numerical inspection.
\end{remark}

As a consequence of the preceding results, we could apply, as in \cite{CHMV},
the L.-S. Young tower theory and conclude the following statements:

\begin{itemize}
\item On the induced set $I,$ the tail of the Lebesgue measure of the set of
points with first return bigger than $n,$ to be more precise the quantity
$\sum_{k>n}m\{x\in I~;~\tau_{I}(x)\geq k\},$ where $\tau_{I}(x)$ denotes the
first return of the point $x$ into $I,$ decays exponentially fast with $n.$ By
using (\ref{SI}) and the asymptotic values for the $b_{n}$ and $b_{n}^{\prime
}$ given below it is immediate to find that the previous rate of decay is
$O(\left(  \alpha^{\prime}\right)  ^{-\frac{n}{B^{\ast}}}).$ This implies the
existence on the Borel $\sigma$-algebra $\mathcal{B}\left(  [0,1]\right)  $ of
an absolutely continuous invariant measure $\mu$ with exponential decay of
correlations for H\"{o}lder observables evolving under $T$ w.r.t. $\mu$ (the
rate of this decay will be of the type $\hat{\alpha}^{-n},$ where $\hat
{\alpha}$ is possibly different from $\alpha^{\prime}$).

\item Since the first return maps $T_{I}$ and $T_{I_{n}}$ are aperiodic
uniformly expanding Markov maps, they admit invariant measures $\mu_{I}$ and
$\mu_{I_{n}}$ which turn out to be equivalent to Lebesgue on $I$ and $I_{n}$
with densities bounded away from $0$ and $\infty$ \cite{CHMV} and also
Lipschitz continuous on the images of the rectangles of the their associated
Markov partition \cite{AD}\footnote{It is argued in \cite{AD} that if $\alpha$
is a Markov partition of the standard probability metric space $(X,\mathcal{B}%
,m,T)$ with distance $d,$ then $T\alpha\subset\sigma(\alpha),$ where
$\sigma(\alpha)$ denotes the sigma-algebra generated by the partition
$\alpha,$ and therefore it exists a (possibly countable) partition $\beta$
coarser than $\alpha$ such that $\sigma(T\alpha)=\sigma(\beta).$ Moreover, if
the system is Gibbs-Markov, as in our case, then the space $Lip_{\infty,\beta
}$ of functions $f:X\rightarrow\mathbb{R},\ $ $f\in L_{m}^{\infty}%
:=L_{m}^{\infty}\left(  X\right)  ,$ which are Lipschitz continuous on each
$Z\in\beta,$ is a Banach space with the norm: $\left\Vert f\right\Vert_{Lip_{\infty,\beta}%
}=\left\Vert f\right\Vert_{L_{m}^{\infty}}+D_{\beta}f,$ where $D_{\beta}f=\sup_{Z\in\beta}%
\sup_{x,y\in Z}\frac{|f(x)-f(y)|}{d(x,y)}.$ The space $Lip_{\infty,\beta}$ is
compactly injected into $L_{m}^{1},$ which gives the desired conclusions on
the smoothness of the density as a consequence of the Lasota-Yorke inequality.
Notice that in our case $m$ in just the Lebesgue measure. We denote by~$B(I)$
the Banach space $Lip_{\infty,\beta}$ defined on $I.$}. In the sequel we will
show that such densities coincide, but a constant, with the restriction, on
the inducing sets, of the density $\rho$ of the invariant measure $\mu$ for
the map $T.$ Now, the images of the rectangles of the Markov partitions are
the (disjoint) sets $(a_{0}^{\prime},x_{0})$ and $(x_{0},a_{0}),$ when we
induce over $I,$ and the whole intervals $(a_{n}^{\prime},a_{n-1}^{\prime})$
and $(a_{n},a_{n+1}),\ n>1,$ when we induce over the rectangles in $I_{n}.$
Therefore we could conclude that the density of the invariant measure $\mu$ is
a piecewise Lipschitz continuous functions with possible discontinuities at
the points $a_{p},a_{p}^{\prime},\ p>1,$ $a_{0},a_{0}^{\prime}$ and $x_{0}.$
\end{itemize}

We now improve this last result by showing that the density is Lipschitz
continuous over the unit interval but in the cusp point $x_{0}.$ We stress
that this result will improve as well Proposition 13 in \cite{CHMV}.

\begin{proposition}
The density $\rho$ of the invariant measure $\mu$ is Lipschitz continuous and
bounded over the intervals $[0,1]$. Moreover,%
\begin{equation}
\lim_{x\rightarrow0^{+}}\rho(x)=\lim_{x\rightarrow1^{-}}\rho(x)=0\ .
\end{equation}

\end{proposition}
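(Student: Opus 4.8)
The plan is to transfer information about the densities $\rho_I$ and $\rho_{I_n}$ of the induced systems — which by the results quoted above are bounded away from $0$ and $\infty$ and Lipschitz continuous on the images $(a_0',x_0)$, $(x_0,a_0)$ and $(a_n',a_{n-1}')$, $(a_n,a_{n+1})$ of the Markov rectangles — up to the global density $\rho$ of $\mu$, and then to improve piecewise Lipschitz continuity to genuine Lipschitz continuity across the break points $a_p,a_p'$ and $a_0,a_0'$, leaving only $x_0$. First I would recall the standard pull-back (Kac) formula expressing $\rho$ on the inducing set in terms of the induced invariant density: for $x$ in a generic point of $[0,1]$,
\begin{equation}
\rho(x)=\frac{1}{\mu_I(I)}\sum_{k\ge 0}\sum_{y}\frac{\rho_I(y)}{|DT^k(y)|},
\end{equation}
where the inner sum runs over the preimages $y\in I$ with $T^k y=x$ and first return time to $I$ strictly larger than $k$ (equivalently, over the cylinders of the Markov partition of $T_I$ that have not yet returned). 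The key point, already implicit in the construction, is that each such branch $T^k$ restricted to the relevant cylinder is a $C^{1+\iota}$ diffeomorphism with uniformly bounded distortion, and the series converges uniformly thanks to the exponential tail of the return time established above (rate $O((\alpha')^{-n/B^\ast})$). Hence $\rho$ is, on each piece of the natural partition determined by $\{a_p,a_p',x_0\}$, a uniformly convergent sum of Lipschitz functions with a common Lipschitz constant, so it is Lipschitz there and bounded; this re-proves the piecewise statement and, as claimed, strengthens Proposition 13 of \cite{CHMV}.

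The substantive part is continuity \emph{across} the break points. Here I would argue branch by branch using the local forms (\ref{T_DT1})--(\ref{T_DT4}). At a point $a_p$ (resp. $a_p'$), which is a preimage of $x_0$ under $T_2T_1^{p-1}$ (resp. $T_1^p$), the two one-sided pull-back expansions of $\rho$ differ only in finitely many "young" terms — the ones whose orbit is near $x_0$ at the last step — plus a common remainder that is continuous; so it suffices to show the finitely many discrepant terms match from the two sides. Because $DT$ blows up like $(x_0-\cdot)^{B'-1}$ and $(\cdot-x_0)^{B-1}$ at the cusp while $T$ itself is continuous and sends both sides onto a neighbourhood of $1$, the factors $1/|DT|$ appearing in those terms vanish as the base point approaches the $x_0$-preimage; combined with the boundedness of $\rho_I$ this forces the one-sided limits of $\rho$ at each $a_p,a_p'$ (and at $a_0,a_0',\,$ using the analogous computation with the inducing sets $I_n$ and the matching of $\rho_I$ with $\rho_{I_n}$ up to a multiplicative constant on overlaps) to agree and, in fact, the Lipschitz constants to be controlled uniformly — so $\rho$ extends to a Lipschitz function on all of $[0,1]\setminus\{x_0\}$, hence on $[0,1]$ since a bounded function Lipschitz off a single point is Lipschitz.

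For the boundary behaviour $\lim_{x\to 0^+}\rho(x)=\lim_{x\to 1^-}\rho(x)=0$, I would use the same pull-back formula near the endpoints. Near $x=0$ the only preimages in play are the ones coming down the left branch $T_1$, whose derivative at $0$ equals $\alpha'>1$; but $0$ is a \emph{fixed} point of $T_1$ with $DT_1(0)=\alpha'$, and a point near $0$ has preimages $y_k\to 0$ with $|DT^k(y_k)|\approx(\alpha')^k$ — this does not by itself give decay, so the vanishing must instead come from the fact that $0$ has \emph{no} preimage under the \emph{right} branch near there and the contribution is a convergent geometric-type sum whose $x\to 0$ limit is governed by $\rho_I$ evaluated at points tending to the endpoint of $I$; more cleanly, one invokes that $x=0$ is the image of the cusp's left neighbourhood folding, so the invariance equation $\rho(x)=\sum_{Ty=x}\rho(y)/|DT(y)|$ at $x\to 0^+$ has on the right only the term through $y\to x_0^-$ with $|DT(y)|\to\infty$ plus the term through $y\to 0^+$; rearranging the invariance equation and using $DT(0^+)=\alpha'>1$ yields $\rho(0^+)(1-1/\alpha')=\lim \rho(y)/|DT(y)|\big|_{y\to x_0}=0$, whence $\rho(0^+)=0$; the argument at $x=1$ is identical with $\alpha$ in place of $\alpha'$, using $0<\alpha<1$ and that $x=1$ is attained only from the cusp (where $1/|DT|\to 0$), giving $\rho(1^-)=0$ directly. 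The main obstacle I expect is the bookkeeping in the second paragraph: identifying precisely which finitely many terms of the two one-sided pull-back series disagree at each $a_p,a_p'$ and showing their difference is killed by the $1/|DT|$ factor \emph{uniformly} in $p$, so that one gets a single global Lipschitz constant rather than constants degenerating as $p\to\infty$.
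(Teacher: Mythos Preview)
Your overall framework---pull back the induced density via the Kac/Pianigiani formula and study the resulting series---is the same as the paper's, but you miss the key simplification and introduce two genuine gaps.

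\textbf{The main simplification you miss.} You propose to establish Lipschitz continuity on each $(a_n',a_{n-1}')$ separately and then \emph{match across} every break point $a_p,a_p'$ by arguing that the finitely many ``discrepant'' terms of the two one-sided series are killed by $1/|DT|\to 0$ near the cusp. The paper avoids this entirely. Starting from the Pianigiani formula and changing variables, it derives for $x\in(a_n',a_{n-1}')$ the explicit representation
\[
\rho(x)=C_r\sum_{m=2}^{\infty}\sum_{l=1,2}\frac{\hat\rho(T_l^{-1}T_2^{-1}T_1^{-(m-2)}x)}{|DT^{m}(T_l^{-1}T_2^{-1}T_1^{-(m-2)}x)|},
\]
and the crucial observation is that this expression \emph{does not depend on $n$}: after the shift $m=p-n$ the index $n$ disappears, so the same formula holds for every $x\in(0,a_0')$. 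Lipschitz continuity on the whole of $(0,a_0')$ (and analogously on $(a_0,1)$) is then immediate from Lipschitz continuity of $\hat\rho$ and $C^{1+\iota}$ regularity of the inverse branches. The bookkeeping you flag as ``the main obstacle'' simply does not arise. Continuity at $a_0,a_0'$ is obtained by noting that the right-hand sides above, evaluated at $x\in I$, coincide with the Perron--Frobenius operator of the first return map applied to $\hat\rho$, so one-sided limits match.

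\textbf{Gap at $x_0$.} Your sentence ``a bounded function Lipschitz off a single point is Lipschitz'' is not justified by what precedes it: you have only established Lipschitz continuity on $(0,x_0)$ and on $(x_0,1)$ \emph{separately}, which still permits a jump at $x_0$. The paper closes this by invoking the Perron--Frobenius equation for $T$ itself at $x_0$, namely $\rho(x_0)=\rho(a_0')/|DT(a_0')|+\rho(a_0)/|DT(a_0)|$, which forces the two one-sided limits to agree.

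\textbf{Error in the boundary argument.} Your computation at $0^+$ misidentifies the preimages: the two $T$-preimages of a point $x$ near $0$ are $T_1^{-1}x\to 0$ and $T_2^{-1}x\to 1$ (since $T_2(1)=0$), \emph{not} a point near $x_0$. Hence the invariance equation gives $\rho(0^+)=\rho(0^+)/\alpha'+\rho(1^-)/\alpha$, which yields $\rho(0^+)=0$ only \emph{after} you know $\rho(1^-)=0$. Your argument for $\rho(1^-)=0$ is correct (both preimages of $1$ lie at $x_0$, where $1/|DT|\to 0$), so the conclusion is salvageable in this order. The paper instead reads off $\rho(x)\approx x^{1/B^\ast-1}$ near both endpoints directly from the explicit series and the scalings of the $a_p,a_p',b_p,b_p'$.
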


\begin{proof}
We work on the induced set $I.$ The invariant measure $\mu_{I}$ for the
induced map $T_{I}$ is related to the invariant measure $\mu$ over the whole
interval thanks to the well-known formula due to Pianigiani:
\begin{equation}
\mu(B)=C_{r}\sum_{i}\sum_{j=0}^{\tau_{i}-1}\mu_{I}(T^{-j}(B)\cap Z_{i})
\label{reldens}%
\end{equation}
where $B$ is any Borel set in $[0,1]$ and the first sum runs over the
cylinders $Z_{i}$ with prescribed first return time $\tau_{i}$ and whose union
gives $I.$ The normalizing constant $C_{r}=\mu(I)$ satisfies $1=C_{r}\sum
_{i}\tau_{i}\mu_{I}(Z_{i}).$

This immediately implies that by calling $\hat{\rho}$ the density of $\mu_{I}
$ we have that $\rho(x)=C_{r}\hat{\rho}(x)$ for $m$-almost every $x\in I$ and
therefore $\rho$ can be extended to a Lipschitz continuous function on $I $ as
$\hat{\rho}.$ A straightforward application of formula (\ref{reldens}) gives
\cite{CHMV}:
\begin{align}
\mu(a_{n-1},a_{n})  &  =C_{r}\mu_{I}(Z_{n+1})\\
\mu(a_{n}^{\prime},a_{n-1}^{\prime})  &  =C_{r}\sum_{p=n+2}^{\infty}\mu
_{I}(Z_{p})
\end{align}
Let us now take a measurable $B\subset(a_{n}^{\prime},a_{n_{1}}^{\prime});$
the formula above immediately implies that
\begin{equation}
\mu(B)=C_{r}\sum_{p=n+2}^{\infty}\mu_{I}(T^{-(p-n)}B\cap Z_{p})
\end{equation}
Passing to the densities we have
\begin{equation}
\int_{B}\rho\left(  x\right)  dx=C_{r}\sum_{p=n+2}^{\infty}\int_{T^{-(p-n)}%
B\cap Z_{p}}\hat{\rho}\left(  x\right)  dx
\end{equation}
We now perform a change of variables by observing that the set $B$ is pushed
backward $p-n-2$ times by means of $T_{1}^{-1},$ then once by means of
$T_{2}^{-1}$ and finally it splits into two parts according to the actions of
$T_{1}^{-1}$ and $T_{2}^{-1}.$ Therefore,
\begin{equation}
\sum_{p=n+2}^{\infty}\int_{T^{-(p-n)}B\cap Z_{p}}\hat{\rho}\left(  x\right)
dx=\sum_{p=n+2}^{\infty}\sum_{l=1,2}\int_{B}\frac{\hat{\rho}(T_{l}^{-1}%
T_{2}^{-1}T_{1}^{-(p-n-2)}y)}{|DT^{p-n}(T_{l}^{-1}T_{2}^{-1}T_{1}%
^{-(p-n-2)}y)|}dy\ .
\end{equation}
Since $B$ is any measurable set in $(a_{n}^{\prime},a_{n-1}^{\prime}),$ we
have for $m$-almost every $x\in(a_{n}^{\prime},a_{n-1}^{\prime}),$%
\begin{align}
\rho(x)  &  =C_{r}\sum_{p=n+2}^{\infty}\sum_{l=1,2}\frac{\hat{\rho}(T_{l}%
^{-1}T_{2}^{-1}T_{1}^{-(p-n-2)}x)}{|DT^{p-n}(T_{l}^{-1}T_{2}^{-1}%
T_{1}^{-(p-n-2)}x)|}\nonumber\\
&  =C_{r}\sum_{m=2}^{\infty}\sum_{l=1,2}\frac{\hat{\rho}(T_{l}^{-1}T_{2}%
^{-1}T_{1}^{-(m-2)}x)}{|DT^{m}(T_{l}^{-1}T_{2}^{-1}T_{1}^{-(m-2)}x)|}\ .
\label{FF}%
\end{align}
This formula does not depend on the choice of the interval $(a_{n}^{\prime
},a_{n-1}^{\prime})$ and therefore it holds for $x\in(0,a_{0}^{\prime}).$ For
the cylinders $(a_{n-1},a_{n})$ we get similarly that, for $m$-almost any
$x\in(a_{0},1),$%
\begin{equation}
\rho(x)=C_{r}\sum_{l=1,2}\frac{\hat{\rho}(T_{l}^{-1}x)}{|DT(T_{l}^{-1}x)|}\ .
\label{FF2}%
\end{equation}
Since $\hat{\rho}$ is Lipschitz continuous inside $I$ and the inverse branches
of $T$ are $C^{1+\iota},$ we conclude that $\rho$ can be chosen as Lipschitz
continuous over the disjoint open intervals $(0,a_{0}^{\prime})\cup(a_{0},1).$
It is now useful to observe that the right hand sides of (\ref{FF}) and
(\ref{FF2}) give exactly the expression of the Perron-Frobenius operator
associated to the first return map and whenever $x$ is chosen into $I.$ By the
existence of the left (resp. right) limit of $\hat{\rho}$ in $a_{0}^{\prime}$
(resp. $a_{0}$) we immediatly obtain the continuity of $\rho$ in such points.
We use now this result to prove the
continuity of the density in $x_0$. We remind that such a  density
is the fixed point of the Perron-Frobenius operator, so that it
verifies the following equation, for any $x\in[0,1]$ :
\begin{equation}\label{PPFF}
\rho(x)=\frac{\rho(T_{1}^{-1}(x))}{|DT\left(  T_{1}^{-1}(x)\right)  |}%
+\frac{\rho(T_{2}^{-1}(x))}{|DT\left(  T_{2}^{-1}(x)\right)  |}\ ,
\end{equation}
which gives, for $x=x_0$
\begin{equation}
\rho(x_0)=\frac{\rho(a'_0)}{|DT\left(  a'_0\right)  |}%
+\frac{\rho(a_0)}{|DT\left(  a_0\right)  |}\ ,
\end{equation}
and this proves immediately the continuity in $x_0$.
\newline We now observe that assumptions (\ref{T_DT1})-(\ref{T_DT4}) together
with the facts that $T_{2}(a_{p})=a_{p-1}^{\prime},$ $T_{1}(a_{p}^{\prime
})=a_{p-1}^{\prime}$ and $T_{1}b_{p}=T_{2}b_{p}=a_{p-1},$ allow to get easily
the following asymptotic behaviors (for $p$ large) for the preimages of
$x_{0}$ (again $c$ will denote a constant independent of $p$ and that could
change from a formula to another):
\begin{align}
a_{p}^{\prime}  &  \sim\frac{c}{\left(  \alpha^{\prime}\right)  ^{p}%
};\ (1-a_{p})\sim\frac{c}{\left(  \alpha^{\prime}\right)  ^{p}}\ ,\\
(x_{0}-b_{p}^{\prime})  &  \sim\frac{c}{\left(  \alpha^{\prime}\right)
^{\frac{p}{B^{\prime}}}};\ (b_{p}-x_{0})\sim\frac{c}{\left(  \alpha^{\prime
}\right)  ^{\frac{p}{B}}}\ .
\end{align}
These formulas immediately imply that for $x=b_{p}$ (resp. $x=b_{p}^{\prime}$)
in a neighborhood of $x_{0}$ and for $p$ large the derivative behaves like
$|DT(x)|\sim c\left(  \alpha^{\prime}\right)  ^{p(\frac{1}{B}-1)}$ (resp.
$|DT(x)|\sim c(\alpha^{\prime})^{p(\frac{1}{B^{\prime}}-1)}$). Since
$\hat{\rho}$ is bounded away from zero and infinity on $I,$ by the preceding
scalings on the growth of the derivative near $x_{0}$ we have that
$\rho(x)\approx x^{\frac{1}{B^{\ast}}-1}$ for $x$ close to $0$ and $1,$ which
means that $\rho$ can be extended by continuity to zero on the right side of
$0$ and on the left side of $1.$
\end{proof}

The preceding proposition suggests the following scaling for the density.

\begin{proposition}%
\begin{align}
\rho(x)  &  =c^{\prime}x^{a}+o(x^{a}),\ x\rightarrow0^{+};\ a>0,\\
\rho(x)  &  =c^{\prime\prime}(1-x)^{b}+o((1-x)^{b});\ x\rightarrow1^{-},\ b>0,
\end{align}
with
\begin{equation}
a=b=\frac{1}{B^{\ast}}-1
\end{equation}
and the constant $c^{\prime}$ and $c^{\prime\prime}$ verifying
\begin{equation}
\left(  \frac{1}{\alpha^{\prime}}\right)  ^{\frac{1}{B^{\ast}}}+\left(
\frac{1}{\alpha(\frac{c^{\prime}}{c^{\prime\prime}})^{B^{\ast}}}\right)
^{\frac{1}{B^{\ast}}}=1\ .
\end{equation}

\end{proposition}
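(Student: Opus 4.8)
The plan is to bootstrap the previous proposition, which already gives the scaling $\rho(x)\approx x^{1/B^{\ast}-1}$ near $0$ and $1$, into a genuine asymptotic with an explicit constant. First I would fix notation: set $a:=\frac{1}{B^{\ast}}-1$, and work on the induced set $I$, writing $\rho = C_r\hat\rho$ on $I$ with $\hat\rho$ Lipschitz and bounded away from $0$ and $\infty$. The key identity is the Perron--Frobenius fixed point equation (\ref{PPFF}), together with the representation formulas (\ref{FF}) and (\ref{FF2}) that express $\rho$ on $(0,a_0^{\prime})$ and on $(a_0,1)$ as sums over the inverse branches $T_l^{-1}T_2^{-1}T_1^{-(m-2)}$ acting on $\hat\rho$. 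The idea is that as $x\to 0^+$, the arguments $T_l^{-1}T_2^{-1}T_1^{-(m-2)}x$ all lie inside $I$ where $\hat\rho$ is continuous, so one can replace $\hat\rho$ by its value at a convergent sequence of points plus a controlled error, and the sum over $m$ converges geometrically because of the scalings on the preimages $a_p^{\prime}\sim c(\alpha^{\prime})^{-p}$, $1-a_p\sim c(\alpha^{\prime})^{-p}$ and on the derivative blow-up $|DT|\sim c(\alpha^{\prime})^{p(1/B-1)}$ near $x_0$.

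Next I would extract the leading term. Near $0$, only the branch $T_1$ is relevant for the outermost inversion, and $T_1^{-1}(x)\sim x/\alpha^{\prime}$ by (\ref{T_DT1}); iterating, $T_1^{-(m-2)}$ contributes a factor $(\alpha^{\prime})^{-(m-2)}$ to the derivative while pulling the point toward $0$, and then $T_2^{-1}$ and $T_l^{-1}$ land the point in a fixed compact subset of $I$ where $\hat\rho$ has a limit along the relevant sequence. Carrying the $x$-dependence through the chain rule and the local form $T(x)=1-A(x-x_0)^B+\cdots$ near the cusp, one finds that the dominant contribution to $\rho(x)$ for small $x$ scales exactly like $x^{1/B^{\ast}-1}$, with a coefficient $c^{\prime}$ built from $\hat\rho$ evaluated at the appropriate accumulation point, the constant $A$ (resp.\ $A^{\prime}$) from (\ref{T_DT3})--(\ref{T_DT4}), and a geometric series in $(\alpha^{\prime})^{-1/B^{\ast}}$. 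The same computation near $1$ produces $c^{\prime\prime}(1-x)^{1/B^{\ast}-1}$; symmetry of the combinatorics of the inverse branches forces the same exponent $a=b=\frac{1}{B^{\ast}}-1$, while $c^{\prime}\neq c^{\prime\prime}$ in general because the branch $T_2$ near $1$ has slope $-\alpha$ with $0<\alpha<1$ whereas $T_1$ near $0$ has slope $\alpha^{\prime}>1$, and because $A\neq A^{\prime}$.

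The relation between $c^{\prime}$ and $c^{\prime\prime}$ I would obtain by feeding these two asymptotics back into the fixed point equation (\ref{PPFF}) evaluated near the cusp rather than near the endpoints. Writing $x$ close to $x_0$, one has $T_1^{-1}(x)$ close to $a_0^{\prime}<x_0$ and $T_2^{-1}(x)$ close to $a_0>x_0$, but to see the constraint linking $c^{\prime}$ and $c^{\prime\prime}$ one should rather iterate: the preimages $b_p,b_p^{\prime}\to x_0$ are mapped by $T$ to $a_{p-1}\to 1$, and then $T_1^{-(p)}$ carries a neighborhood of $1$ down to a neighborhood of $0$; matching the $x^a$ behavior at $0$ coming from pulling back the $(1-\cdot)^b$ behavior at $1$, and separately the self-consistency of pulling the $x^a$ behavior at $0$ back through $T_1$, yields precisely the scalar equation $(1/\alpha^{\prime})^{1/B^{\ast}}+(1/(\alpha(c^{\prime}/c^{\prime\prime})^{B^{\ast}}))^{1/B^{\ast}}=1$, since the first term is the contribution of the branch that keeps one near $0$ and the second is the contribution of the branch feeding in the mass from near $1$, rescaled by the ratio of amplitudes. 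The main obstacle I anticipate is making the interchange of limit and infinite sum rigorous uniformly in $x$: one must show the tail of the series in (\ref{FF}) is $o(x^a)$, which requires the exponential decay of $m\{x\in I:\tau_I(x)\ge k\}$ established via the Young tower together with the distortion bounds, and one must control the error from replacing $\hat\rho$ at the moving point by its limit, using the Lipschitz continuity of $\hat\rho$ on $I$ and the $C^{1+\iota}$ regularity of the inverse branches, checking that the resulting error is of strictly smaller order than $x^a$. Everything else is a bookkeeping of chain-rule factors using (\ref{T_DT1})--(\ref{T_DT4}) and the preimage scalings already recorded in the previous proposition.
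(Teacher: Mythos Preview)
Your approach is genuinely different from the paper's and in one respect more thorough. The paper does not analyze the series (\ref{FF}) and (\ref{FF2}) at all; instead it simply \emph{postulates} the asymptotic forms $\rho(x)=c'x^{a}+o(x^{a})$ and $\rho(x)=c''(1-x)^{b}+o((1-x)^{b})$, substitutes them into the single-step Perron--Frobenius equation (\ref{PPFF}), and reads off the exponents and the constant relation by matching leading orders. Near $x=1$ both preimages $T_{1}^{-1}(x),T_{2}^{-1}(x)$ lie near the cusp $x_{0}$, and the local forms (\ref{T_DT3})--(\ref{T_DT4}) force $b=1/B^{\ast}-1$ directly. Near $x=0$ the preimage $T_{1}^{-1}(x)$ stays near $0$ while $T_{2}^{-1}(x)$ lies near $1$ (since $T_{2}(1)=0$), so (\ref{PPFF}) becomes $c'x^{a}=c'(\alpha')^{-a-1}x^{a}+c''\alpha^{-b-1}x^{b}$ at lowest order; this forces $a=b$, and substituting $a=b=1/B^{\ast}-1$ yields the stated relation between $c'$ and $c''$ immediately. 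The whole thing is a few lines of matching.

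Your route via the explicit series would actually \emph{establish} the existence of a genuine asymptotic with a constant, rather than merely checking its self-consistency; this is an advantage, since the previous proposition only gives two-sided bounds $\rho(x)\approx x^{1/B^{\ast}-1}$, not $\rho(x)\sim c'x^{a}$. The cost is the tail control you correctly flag. For the constant relation, however, you make it much harder than necessary: there is no need to iterate or to work near the cusp. The relation drops out of (\ref{PPFF}) evaluated at a single point near $0$, exactly as in your final parenthetical about ``the branch that keeps one near $0$'' versus ``the branch feeding in mass from near $1$''; the earlier part of your third paragraph is a detour.
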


\begin{proof}
We use again formula (\ref{PPFF}). By using for $T$ and its two inverse branches the asymptotic polynomial
behaviors in $0$ and $1$ given in (\ref{T_DT1})-(\ref{T_DT4}), we get at the
lowest order in $x$ in the neighborhood of $0,$%
\begin{equation}
c^{\prime}\left(  \alpha^{\prime}\right)  ^{-a-1}x^{a}+c^{\prime\prime}%
\alpha^{-b-1}x^{b}=c^{\prime\prime}x^{a}\ .\label{fr}%
\end{equation}
Now, suppose $a<b.$ Then $\left(  \alpha^{\prime}\right)  ^{-a-1}\approx1,$
which implies either $\alpha^{\prime}=1$ or\linebreak$a=-1$ and both cases are
excluded. On the contrary, if $a>b,$ then $\alpha^{-b-1}x^{b}\sim0,$ implying
$\alpha=0,$ which is still impossible. Hence, we necessarily have $a=b.$ We
now take the point $x$ in the neighborhood of $1.$ By explicitating
$T_{1}^{-1}\left(  x\right)  $ and $T_{2}^{-1}\left(  x\right)  $ with respect
to $x$ in the neighborhood of $x_{0}$ and substituting into the
Perron-Frobenius equation we get, at the lowest order in $1-x,$%
\begin{equation}
\frac{O(1)}{\left(  1-x\right)  ^{\frac{B-1}{B}}}+\frac{O(1)}{\left(
1-x\right)  ^{\frac{B^{\prime}-1}{B^{\prime}}}}=(1-x)^{b}\ ,
\end{equation}
from which we obtain
\begin{equation}
(1-x)^{-\frac{B^{\ast}-1}{B^{\ast}}}\approx(1-x)^{b}\ .
\end{equation}
We finally conclude that $a=b=\frac{1}{B^{\ast}}-1.$ Substituting this common
value into equation (\ref{fr}) we finally get the expression relating the
constants $c^{\prime}$ and $c^{\prime\prime}.$
\end{proof}

The latter relation is a good check for the validity of the shape of the
density in $0$ and $1.$

By assuming the continuity of $\rho$ in $x_{0}$, we could use the value of $a
$ given above in terms of the map parameter $B^{\ast}$ to guess a functional
expression for $\rho.$ In agreement with the previous considerations, such an
expression could be
\begin{equation}
\rho(x)=N\left(  \gamma,\delta\right)  e^{-\gamma x}x^{\delta}(1-x)^{\delta
}\ , \label{rfit}%
\end{equation}
where, if $I_{\nu}\left(  z\right)  $ is the modified Bessel function of the
first kind,%
\begin{equation}
N\left(  \gamma,\delta\right)  =\frac{\gamma^{\frac{1}{2}+\delta}%
e^{\frac{\gamma}{2}}}{\sqrt{\pi}\Gamma\left(  1+\delta\right)  I_{\frac{1}%
{2}+\delta}\left(  \frac{\gamma}{2}\right)  }\ ,
\end{equation}
with $\delta=a,\ c^{\prime}=N\left(  \gamma,\delta\right)  $ and
$c^{\prime\prime}=N\left(  \gamma,\delta\right)  e^{-\gamma}.$

Numerical computations performed on about $10^{5}$ values for Casimir maxima
allowed us to estimate the parameters describing the local behavior of the map
listed at the beginning of this section:

\begin{itemize}
\item $\alpha^{\prime}\simeq1.113\ ,\ \alpha\simeq0.4603\ ;$

\item $B^{\prime}\simeq0.3095\ ,\ B\simeq0.2856\ .$
\end{itemize}

Therefore, we get $B^{\ast}=B^{\prime}$ and $\delta\simeq2.2258.$ The fit of
the empirical stationary distribution function performed with such parameters
comes out to be in good agreement with the functional expression for the
invariant density (\ref{rfit}) and the estimated value for $\gamma$ is
$\gamma\simeq4.26.$

\begin{figure}[htbp]
\centering
\resizebox{0.75\textwidth}{!}{%
\includegraphics{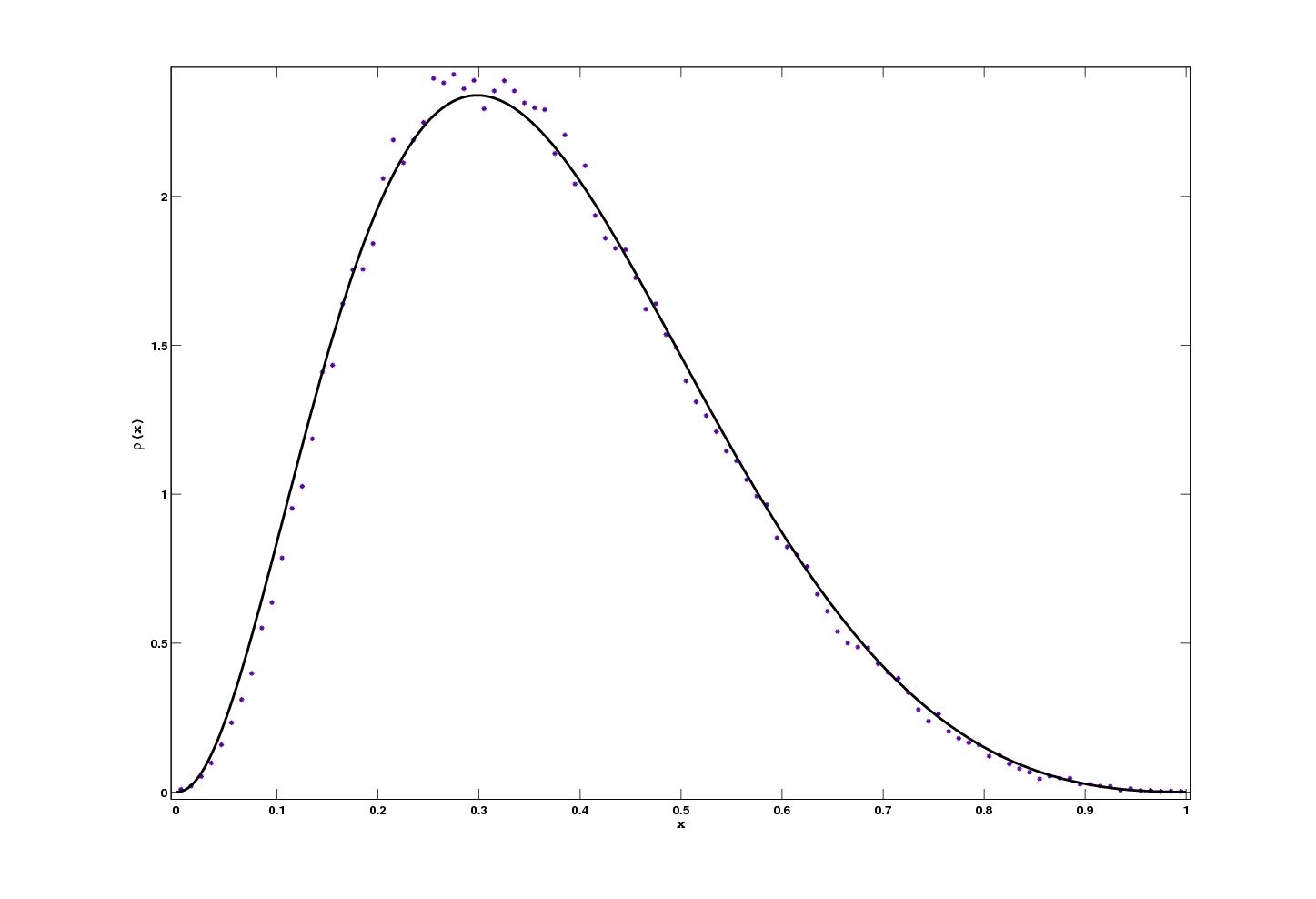}
}
\caption{Fit of the invariant density $\rho\left(  x\right) $ for the map $T$ with the function given in (\ref{rfit})}
\label{fig:2}       
\end{figure}

An interesting question is to locate the maximum of the density $\rho.$
Numerical investigations suggest that this maximum belongs to $\left[
a_{0}^{\prime},x_{0}\right]  $ (see fig.2) depending on the parameters which
define the map $T.$

\subsection{Return times}

In section II and in section IV B of \cite{PM}, the periodic orbits of the
system, due to its invariance under $R,$ have been empirically classified by
specifying that the initial condition belongs to the half space containing,
say, the fixed point $c_{1}$ and the number of rotations they perform around
the fixed point $c_{2}$ (cfr. \cite{PM} figg. 2 and 11b).

In particular, labeling as $\Sigma_{+}$ the portion of $\Sigma$ laying in the
half space containing $c_{1},$ it can be shown by direct inspection that fig.
11b in \cite{PM}, which represents the map of the set of maximum values of $C$
in itself associated to periodic trajectories starting from $\Sigma_{+}$ after
$k$ rotations around $c_{2},$ is exactly the graph of the induced map of $T$
in the appropriate scale.

Therefore, the distribution of the number of times a trajectory of the system,
starting from $\Sigma_{+},$ winds around $c_{2}$ before hitting again
$\Sigma_{+}$ or equivalently, starting from $\Sigma_{-},$ winds around $c_{1}$
before hitting again $\Sigma_{-},$ is the same of that of the random variable
$\tau_{\left(  x_{0},1\right)  }\left(  x\right)  ,\ x\in\left(
x_{0},1\right)  ,$ being the return time on $\left(  x_{0},1\right)  $
starting from $x$ under the dynamics induced by $T.$ In terms of the already
constructed invariant measure $\mu,$ this probability is given by
\begin{align}
\mu(\tau_{(x_{0},1)}(x)  &  \geq n\ ;\ x\in(x_{0},1))=\sum_{l=n}^{\infty}%
\mu(\tau_{(x_{0},1)}(x)=l\ ;\ x\in(x_{0},1))\\
&  =\sum_{l=n}^{\infty}\mu(a_{n-2},a_{n-1})\ .\nonumber
\end{align}
But the sum on the r.h.s. can be computed using the corresponding expression
evaluated in (\ref{SL}) and we finally get
\begin{equation}
\mu(\tau_{(x_{0},1)}(x)\geq n\ ;\ x\in(x_{0},1))\approx\left(  \alpha^{\prime
}\right)  ^{-\frac{n}{B^{\ast}}}\ .
\end{equation}
We remark that the distribution of $\tau_{\left(  x_{0},1\right)  }\left(
x\right)  ,\ x\in\left(  x_{0},1\right)  ,$ is the $\mu\ a.s.$ limit of the
empirical distribution of the points appearing in fig. 2 of \cite{PM}.

We also take the occasion to remark that the average time between two crossing
of $\Sigma$ corresponds to the gap between the filled bands of points
appearing in fig.2 of \cite{PM} which has been estimated to be about $0.66\ .$
Therefore, the period of the smallest periodic orbit of the Lorenz system is
about $2\cdot0.66$ in complete agreement with what predicted by the
perturbation theory developed in \cite{Lu} and the more rigorous estimate
given in \cite{GT}.

\subsection{Statistical stability}

A slight change in the forcing term in the Lorenz equation will also change
the shape of the associated map $T$ and therefore the invariant density
associated to it, which will exist provided the perturped map still satisfies
(\ref{T_DT1}-\ref{T_DT4}). At the end of the section we will give two examples
of such a perturbation of the forcing contribution to the Lorenz field, the
first preserving the original symmetry of the Lorenz system, and the second
breaking it. As already remarked in the introduction, this last type of
perturbation has been empirically shown in \cite{CMP} to model the impact of
anthropogenic forcing to climate dynamics of the northern hemisphere as well
as the effect of the sea surface temperature on the Indian summer monsoon
rainfall variability \cite{KDC}.

Let us denote by $T_{\epsilon}$ the perturbed map. We show in this section
that under suitable assumptions the density $\rho_{\epsilon}$ of the perturbed
measure will converge to the density $\rho$ of the unperturbed one in the
$L_{m}^{1}$ norm. This kind of property is know as \emph{statistical
stability}. A former paper by Alves and Viana \cite{AV}, see also the
succesive paper by Alves \cite{Al}, addressed the question of the statistical
stability for a wide class of non-uniformly expanding maps. Their result is
based on two assumptions: (i) the perturbed map belongs to an open
neighborhood of the unperturbed one in the $C^{k}$ topology with $k\geq2$ and
(ii) the two maps are compared throughout their first return maps defined on
the \emph{same} subset where the first return maps are uniformly expanding,
with bounded distortion and long branches. Moreover, the structural parameters
of the perturbed map (especially those bounding the derivative and the
distortion) could be chosen uniformly in a $C^{k}$ neighborhood of the
unperturbed map. The main result of those papers is that when the perturbed
map converges to the unperturbed one in the $C^{k}$ topology then the density
of the absolutely continuous invariant perturbed measure converges to the
density of the unperturbed measure in the $L_{m}^{1} $ norm. Here we prove the
same result but allowing the perturbed map to be close to the unperturbed one
in the $C^{0}$ topology only. We will make use of induction but, in order to
preserve the Markov structure of the first return map, we will compare the
perturbed and the unperturbed first return maps on \emph{different} induction
subsets. The difficulty will therefore arise in the comparison of the
Perron-Frobenius operators, which will now be defined on different functional
spaces. The proof we give is inspired by the recent work \cite{BV}, but it
contains the important improvement of changing the domains of inductions.
Contrarily to \cite{AV} we are not able to establish the continuity of the map
$T_{\epsilon}\mapsto\rho_{\epsilon}$ and this is surely due to the fact that
we only require the maps $C^{0}$ close. On the other hand, discarding
regularity allows us to cover a much wider class of examples; we believe in
fact that our techniques could be used to prove the statistical stability for
general classes of maps with some sort of criticalities and
singluarities.\newline

\textbf{Assumptions on the perturbed map}

\begin{itemize}
\item[\emph{Assumption A}] $T_{\epsilon}$ is a Markov map of the unit interval
which is one-to-one and onto on the intervals $[0,x_{\epsilon,0})$ and
$(x_{\epsilon,0},1],$ convex on both sides and of class $C^{1+\iota_{\epsilon
}}$ on the open interval $(0,x_{\epsilon,0})\cup(x_{\epsilon,0},1).$

\item[\emph{Assumption B}] Let $\left\Vert \cdot\right\Vert _{0}$ denotes the
$C^{0}$-norm on the unit interval, then
\begin{equation}
\lim_{\epsilon\rightarrow0}\left\Vert T_{\epsilon}-T\right\Vert _{0}=0\ .
\end{equation}
Moreover, $\forall x\in\lbrack0,1],\ x\neq x_{0},$ we can find $\epsilon(x)$
such that,\linebreak$\forall\epsilon<\epsilon(x),\ DT_{\epsilon}$ exists and
is finite and we have
\begin{equation}
\lim_{\epsilon\rightarrow0}DT_{\epsilon}(x)=DT(x)\ .
\end{equation}
Furthermore,
\begin{equation}
\lim_{x\rightarrow x_{0}^{+}}\lim_{\epsilon\rightarrow0}\frac{DT_{\epsilon
}(x)}{DT(x)}=\lim_{x\rightarrow x_{0}^{-}}\lim_{\epsilon\rightarrow0}%
\frac{DT_{\epsilon}(x)}{DT(x)}=1\ .
\end{equation}

\item[\emph{Assumption C}] Let us denote by $C_{h,\epsilon}$ and
$\iota_{\epsilon}$ respectively the H\"{o}lder constant and the H\"{o}lder
exponent for the derivative of $T_{\epsilon}$ on the open interval
$(0,x_{\epsilon,0})\cap(x_{\epsilon,0},1);$ namely: $|DT_{\epsilon
}(x)-DT_{\epsilon}(y)|\leq C_{h,\epsilon}|x-y|^{\iota_{\epsilon}}$ for any
$x,y$ either in $(0,x_{\epsilon,0})$ or in $(x_{\epsilon,0},1).$ We assume
$C_{h,\epsilon}$ and $\iota_{\epsilon}$ to converge to the corresponding
quantities for $T$ in the limit $\epsilon\rightarrow0.$

\item[\emph{Assumption D}] Let us set $d_{(\epsilon,1,0)}:=\inf_{(b_{\epsilon
,1},a_{\epsilon,0})}|DT_{\epsilon}(x)|.$ We assume $d_{(\epsilon,1,0)}>1$ and
that there exists a constant $d_{c}$ and $\epsilon_{c}=\epsilon\left(
d_{c}\right)  $ such that,\newline$\forall\epsilon<\epsilon_{c},\ |d_{(1,0)}%
-d_{(\epsilon,1,0)}|<d_{c}.$
\end{itemize}

\emph{Remark on the notation.} To simplify the notations we will
set:\newline$W_{n}^{\prime}=(a_{n}^{\prime},a_{n-1}^{\prime});W_{n}%
=(a_{n},a_{n+1})$ and we will denote by $W_{\epsilon,n}^{\prime}%
=(a_{\epsilon,n}^{\prime},a_{\epsilon,n-1}^{\prime})$ and $W_{\epsilon
,n}=(a_{\epsilon,n},a_{\epsilon,n+1})$ the corresponding perturbed intervals,
where $a_{\epsilon,n}^{\prime}$ and $a_{\epsilon,n},\ n>1,$ are the preimages
of the maximum point $x_{\epsilon,0}.$ We also set
\begin{equation}
Z_{\epsilon,1}=Z_{\epsilon,1}^{1}\cup Z_{\epsilon,1}^{2}\ ;\ Z_{\epsilon
,1}^{1}:=(a_{\epsilon,0}^{\prime},b_{\epsilon,1}^{\prime})\ ,\ Z_{\epsilon
,1}^{2}:=(b_{\epsilon,1},a_{\epsilon,0})
\end{equation}
and
\begin{equation}
Z_{\epsilon,n}=Z_{\epsilon,n}^{1}\cup Z_{\epsilon,n}^{2}\ ;\ Z_{\epsilon
,n}^{1}:=(b_{\epsilon,n-1}^{\prime},b_{\epsilon,n}^{\prime})\ ,\ Z_{\epsilon
,n}^{2}:=(b_{\epsilon,n},b_{\epsilon,n-1}),
\end{equation}
where $T_{\epsilon}b_{\epsilon,n}^{\prime}=T_{\epsilon}b_{\epsilon
,n}=a_{\epsilon,n-1}.$ The same notation will be used for the corresponding
unperturbed intervals. We denote by $I_{\epsilon}:=(a_{\epsilon,0}^{\prime
},a_{\epsilon,0})\backslash\left\{  x_{\epsilon,0}\right\}  $ the interval
where we will induce with the first perturbed return map. From now on, we will
denote by $F$ the first return map of $T$ over $I,$ by $F_{\epsilon}$ the
first return map of $T_{\epsilon}$ on $I_{\epsilon}$ and by $P$ and
$P_{\epsilon}$ the Perron-Frobenius operators associated respectively with $F$
and $F_{\epsilon}.$ If $t\in T^{-n}z,$ where $t=T_{i_{n}}^{-1}\circ
T_{i_{n-1}}^{-1}\dots\circ T_{i_{1}}^{-1}z$ with $i_{k}=1$ or $2,$ we will
call the sequence $i_{1},\dots,i_{n}$ the \emph{signature} of $t$ relatively
to $z.$

\begin{remark}
\label{FR} The preceding assumptions imply that the order of tangency of
$T_{\epsilon}$ in $0,x_{0}$ and $1$ tends, in the limit $\epsilon
\rightarrow0,$ to that of $T.$ This is the first requirement to get again
Lemma 1 for the perturbed map. The other requirement is expressed by
Assumption D which guarantees the condition (i) in Lemma 1. Notice that this
condition cannot be deduced by assumptions (A)-(C). On the other hand, the
assumptions (ii) and (iii) of Lemma 1 are still valid for the perturbed map
since we have only to control a finite number of relations among the
corresponding derivatives. For istance, by using Assumptions B and C, we have
\begin{equation}
|DT(a_{l}^{\prime})-DT_{\epsilon}(a_{\epsilon,l}^{\prime})|\leq|DT_{\epsilon
}(a_{l}^{\prime})-DT_{\epsilon}(a_{\epsilon,l}^{\prime})|+|DT_{\epsilon}%
(a_{l}^{\prime})-DT(a_{l }^{\prime})|\ .
\end{equation}
The first term on the right hand side of this inequality is controlled by the
H\"{o}lder continuity of the derivative of $T_{\epsilon}$ while the second one
is controlled by the local convergence to $DT$ of $DT_{\epsilon}$ in the limit
$\epsilon\rightarrow0.$ However, we need some more informations for the first
return maps, which are summarized in the following Lemma.
\end{remark}

\begin{lemma}
\label{SL}

\begin{itemize}
\item[(i)] For any $n\geq0,$ let $t_{n}$ and $t_{\epsilon,n}$ two preimages of
order $n$ of $x_{0}$ and $x_{\epsilon,0}$ respectively with the same signature
with respect to these two points. Then, $\lim_{\epsilon\rightarrow
0}t_{\epsilon,n}=t_{n}.$

\item[(ii)] For any $n>0$ we have:
\begin{equation}
\lim_{\epsilon\rightarrow0}\left\Vert T_{\epsilon}^{n}-T^{n}\right\Vert
_{0}=0\,.
\end{equation}

\item[(iii)] For any $x\neq\cup_{k=0}^{\infty}T^{-k}x_{0},$ $n>0$ there exists
$\epsilon(x,n)$ such that, for any $\epsilon<\epsilon(x,n),$ the derivative
$DT_{\epsilon}^{n}(x)$ exists and is finite and moreover
\begin{equation}
\lim_{\epsilon\rightarrow0}DT_{\epsilon}^{n}(x)=DT^{n}(x)\,.
\end{equation}

\item[(iv)] For any $n\geq1,$ let $[u_{n},v_{n}],[u_{\epsilon,n}%
,v_{\epsilon,n}]\subset\left[  0,1\right]  $ such that $u_{\epsilon
,n}\rightarrow u_{n},v_{\epsilon,n}\rightarrow v_{n}$ in the limit
$\epsilon\rightarrow0$ and $T_{\epsilon}^{n}\upharpoonleft_{\lbrack
u_{\epsilon,n},v_{\epsilon,n}]},T^{n}\upharpoonleft_{\lbrack u_{n},v_{n}]}$
are injective on the respective images. Then, setting for any $y\in
T_{\epsilon}^{n}([u_{\epsilon,n},v_{\epsilon,n}])\cap T^{n}([u_{n},v_{n}]),$%
\begin{equation}
T_{\epsilon}^{-\left(  n\right)  }:=(T_{\epsilon}^{n}\upharpoonleft_{\left[
u_{\epsilon,n},v_{\epsilon,n}\right]  })^{-1},T^{-\left(  n\right)  }%
:=(T^{n}\upharpoonleft_{\left[  u_{n},v_{n}\right]  })^{-1}\ ,
\end{equation}
$T_{\epsilon}^{-\left(  n\right)  }(y)\rightarrow T^{-\left(  n\right)  }(y)$
in the limit $\epsilon\rightarrow0.$
\end{itemize}
\end{lemma}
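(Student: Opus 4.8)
\textbf{Proof plan for Lemma \ref{SL}.} The plan is to prove the four items essentially in the order listed, since each relies on the previous ones and on the basic Assumptions A--D together with Remark \ref{FR}. For item (i) I would argue by induction on the order $n$ of the preimage. The base case $n=0$ is trivial once we recall that by Assumption B we have $\|T_\epsilon - T\|_0 \to 0$, which together with the transversality of the two branches at the cusp forces $x_{\epsilon,0}\to x_0$; more precisely, both $a_{\epsilon,0}=T_{\epsilon,2}^{-1}x_{\epsilon,0}$ and $a_{\epsilon,0}'=T_{\epsilon,1}^{-1}x_{\epsilon,0}$ converge to their unperturbed analogues because the branches $T_{\epsilon,i}$ are monotone, uniformly away from the critical behavior on compact subsets of $(0,x_0)$ and $(x_0,1)$, and converge uniformly to $T_i$. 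For the inductive step, given a preimage $t_{\epsilon,n+1}$ with prescribed signature $(i_1,\dots,i_{n+1})$, write $t_{\epsilon,n+1}=T_{\epsilon,i_1}^{-1}(t_{\epsilon,n})$ where $t_{\epsilon,n}$ has signature $(i_2,\dots,i_{n+1})$; by the inductive hypothesis $t_{\epsilon,n}\to t_n$, this limit point lies in the interior of the appropriate monotonicity interval of $T_{i_1}$ (this uses that $t_n \neq x_0$, i.e.\ that we are taking honest preimages of $x_0$ and not $x_0$ itself), and on a neighborhood of that point $T_{\epsilon,i_1}^{-1}\to T_{i_1}^{-1}$ by the local $C^0$ and $C^1$ convergence in Assumption B. This is essentially the same continuity-of-inverse-branch argument that item (iv) will later state in general form.

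Item (ii) follows from item (i) together with Assumption B by a finite induction on $n$: writing $T_\epsilon^n = T_\epsilon \circ T_\epsilon^{n-1}$ and $T^n = T\circ T^{n-1}$, I would estimate $\|T_\epsilon^n - T^n\|_0$ by a triangle inequality, $\|T_\epsilon(T_\epsilon^{n-1}x) - T_\epsilon(T^{n-1}x)\| + \|T_\epsilon(T^{n-1}x) - T(T^{n-1}x)\|$. The second term is $\le \|T_\epsilon - T\|_0\to 0$. The first term is the delicate one: $T_\epsilon$ is not uniformly Lipschitz near $0$, $1$, $x_{\epsilon,0}$, so one cannot simply use a Lipschitz bound. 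Here one splits the unit interval into a finite union of a small neighborhood $U$ of the three bad points $\{0,x_0,1\}$ and its complement. On the complement $T_\epsilon$ has derivative bounded uniformly in $\epsilon$ (by Assumptions B and C, since $C_{h,\epsilon}$ and $\iota_\epsilon$ converge), so the first term is controlled by $\|T_\epsilon^{n-1} - T^{n-1}\|_0$ times a constant, and the induction closes on that part. On $U$, one uses the modulus-of-continuity control coming from the convergence of the orders of tangency (Remark \ref{FR}): $T_\epsilon$ restricted to $U$ has a uniform (in $\epsilon$) modulus of continuity of the form $\omega(r) = c\, r^{B^\ast}$ near $x_0$ and $\omega(r)=c\,r$ near $0,1$, so small displacements of the argument produce small displacements of the value, uniformly in $\epsilon$. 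Combining, $\|T_\epsilon^n - T^n\|_0 \to 0$.

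Item (iii) is again a finite induction, now on the derivatives. For $x \notin \cup_k T^{-k}x_0$ all the iterates $T^j x$, $j<n$, avoid the bad points $0, x_0, 1$; by item (ii) and item (i) the perturbed iterates $T_\epsilon^j x$ stay in a fixed compact subset of $(0,1)\setminus\{x_0\}$ for $\epsilon$ small, where by Assumption B $DT_\epsilon$ exists, is finite, and converges pointwise to $DT$. Then the chain rule $DT_\epsilon^n(x) = \prod_{j=0}^{n-1} DT_\epsilon(T_\epsilon^{j}x)$ and the corresponding product for $T$, together with $T_\epsilon^j x\to T^j x$ (item (ii)) and local continuity of $DT$ at those points, give $DT_\epsilon^n(x)\to DT^n(x)$; one uses here the local uniform convergence $DT_\epsilon \to DT$ on compacta away from the singular set, which follows from pointwise convergence plus the uniform Hölder bound of Assumption C (Arzel\`a--Ascoli / equicontinuity). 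Finally, item (iv) is the general inverse-branch statement. Since $T_\epsilon^n$ is injective on $[u_{\epsilon,n},v_{\epsilon,n}]$ with image converging (by item (ii) and $u_{\epsilon,n}\to u_n$, $v_{\epsilon,n}\to v_n$) to the image of $T^n$ on $[u_n,v_n]$, I would fix $y$ in the intersection of the images, let $x_\epsilon := T_\epsilon^{-(n)}(y)\in[u_{\epsilon,n},v_{\epsilon,n}]$, and show $x_\epsilon\to x:=T^{-(n)}(y)$. Extract a convergent subsequence $x_{\epsilon_k}\to \bar x\in[u_n,v_n]$ (compactness); by item (ii), $T^n(\bar x)=\lim T_{\epsilon_k}^n(x_{\epsilon_k})=\lim y = y$, and by injectivity of $T^n$ on $[u_n,v_n]$ we get $\bar x = x$. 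Since every subsequential limit equals $x$, the whole net converges.

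The main obstacle is the estimate of $\|T_\epsilon^n - T^n\|_0$ near the three bad points in item (ii): away from these points everything is a soft compactness-plus-uniform-bound argument, but at $0$, $1$, and especially the cusp $x_0$ one genuinely needs the convergence of the orders of tangency recorded in Remark \ref{FR} to get a common modulus of continuity for the family $\{T_\epsilon\}$, so that a small error propagated from the previous iterate is not amplified there. Once that uniform modulus of continuity is in hand, items (i), (iii) and (iv) are comparatively routine continuity-of-inverse-branches and chain-rule arguments.
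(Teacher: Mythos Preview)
Your proof is correct and follows essentially the same inductive/subsequence strategy as the paper for items (i), (iii) and (iv). The one genuine difference is in item (ii), and it is worth pointing out because you flag it as the ``main obstacle'' when in fact it can be sidestepped entirely.

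In (ii) you insert the intermediate term $T_\epsilon(T^{n-1}x)$, which leaves you with $|T_\epsilon(T_\epsilon^{n-1}x)-T_\epsilon(T^{n-1}x)|$ and hence forces you to control a modulus of continuity for the whole family $\{T_\epsilon\}$, uniformly in $\epsilon$, near the bad points $0,x_0,1$. This is doable via Remark~\ref{FR}, but it is extra work. The paper instead inserts $T(T_\epsilon^{n-1}x)$, obtaining
\[
|T_\epsilon^n(x)-T^n(x)|\le \|T_\epsilon-T\|_0 + |T(T_\epsilon^{n-1}x)-T(T^{n-1}x)|\le \|T_\epsilon-T\|_0 + \omega_T\bigl(\|T_\epsilon^{n-1}-T^{n-1}\|_0\bigr),
\]
where $\omega_T$ is the modulus of continuity of the single continuous map $T$ on the compact interval $[0,1]$. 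No uniform-in-$\epsilon$ modulus is needed, and the induction closes immediately. Your splitting into a neighborhood $U$ of the singular set and its complement is therefore unnecessary: the ``obstacle'' is an artifact of which side of the triangle inequality you chose.
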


\begin{proof}

\begin{itemize}
\item[(i)] We prove it for $n=0,$ for $n\geq1$ the proof will follow by
induction. Suppose $x_{\epsilon,0}$ does not converge to $x_{0},$ then passing
to subsequences, by compactness, there exists a subsequence $\epsilon_{n}$ and
a point $\tilde{x}\neq x_{0}$ such that $x_{\epsilon_{n},0}\rightarrow
\tilde{x}$ for $n\rightarrow\infty$. In such a point $T(\tilde{x})<1$ since
$T$ has only one maximum located at $x_{0}.$ Now, $|T_{\epsilon_{n}%
}(x_{\epsilon_{n},0})-T(\tilde{x})|=|1-T(\tilde{x})|>0. $ We now fix
$\sigma>0$ and choose $n$ large enough, depending on $\sigma,$ in such a way
that for uniform convergence we get
\begin{align}
\left\vert T_{\epsilon_{n}}(x_{\epsilon_{n},0})-T(\tilde{x})\right\vert  &
=\left\vert T_{\epsilon_{n}}(x_{\epsilon_{n},0})-T_{\epsilon_{n}}(\tilde
{x})+T_{\epsilon_{n}}(\tilde{x})+T(x_{\epsilon_{n},0})\right. \\
&  \left.  -T(x_{\epsilon_{n},0})+T(\tilde{x})\right\vert \nonumber\\
&  \leq2\left\Vert T_{\epsilon_{n}}-T\right\Vert _{0}+|T(x_{\epsilon_{n}%
,0})-T_{\epsilon_{n}}(\tilde{x})|\nonumber\\
&  \leq2\sigma+|T(x_{\epsilon_{n},0})-T_{\epsilon_{n}}(\tilde{x})|\ .\nonumber
\end{align}
In the limit $n\rightarrow\infty$ the second term on the right hand side of
the previous inequality goes to zero by assumption B and by the continuity of
$T.$ We finally send $\sigma$ to zero getting a contradiction with the above
strictly positive lower bound.

\item[(ii)] The proof is standard and by induction and it uses the uniform
continuity of $T^{n}$ on the closed unit interval.

\item[(iii)] We use induction again. Suppose the limit holds for $n.$ Then we
write
\begin{gather}
|DT_{\epsilon}^{n+1}(x)-DT^{n+1}(x)|=\\
\left\vert DT_{\epsilon}(T_{\epsilon}^{n}(x))DT_{\epsilon}^{n}(x)-DT(T^{n}%
(x))DT^{n}(x)+\right. \nonumber\\
\left.  DT_{\epsilon}(T^{n}(x))DT_{\epsilon}^{n}(x)-DT_{\epsilon}%
(T^{n}(x))DT_{\epsilon}^{n}(x)\right\vert \ .\nonumber
\end{gather}
Now, we know that: (a) $x\neq\cup_{k=0}^{n-1}T^{-k}x_{0}$ by assumption, and
also (b) $x\neq\cup_{k=0}^{n-1}T_{\epsilon}^{-k}x_{\epsilon,0}$ since by the
induction assumption the derivative $DT^{n}_{\epsilon}(x)$ is well defined at
$x\neq\cup_{k=0}^{\infty}T^{-k}x_{0}$. We need to take $\epsilon$ even
smaller, than a certain $\epsilon(x, n)$, to guarantee that $|DT_{\epsilon
}^{n+1}(x)|$ is well defined too. This is easily achieved since the preimages
of $x_{0}$ and $x_{\epsilon,0}$ converge to each other according to signature
and by choosing $\epsilon$ small enough depending on $x$ and $n$ we could just
get (a) and (b) at the same time and for $n$. We can now bound the previous
expression by:
\begin{align}
&  |DT_{\epsilon}^{n}(x)||DT_{\epsilon}(T_{\epsilon}^{n}(x))-DT_{\epsilon
}(T^{n}(x))|\\
&  +|DT_{\epsilon}(T^{n}(x))DT_{\epsilon}^{n}(x)-DT(T^{n}(x))DT^{n}%
(x)|\ .\nonumber
\end{align}
The second term converges to zero by the induction assumption. The first term
can be bounded making use of the H\"{o}lder continuity assumption on the
derivative, namely
\begin{equation}
|DT_{\epsilon}(T_{\epsilon}^{n}(x))-DT_{\epsilon}(T^{n}(x))|\leq
C_{h,\epsilon}|T_{\epsilon}^{n}(x))-T^{n}(x)|^{\iota_{\epsilon}}\ ,
\end{equation}
and of Assumption C assuring $C_{h,\epsilon}$ and $\iota_{\epsilon}$ to
converge to the corresponding quantities given for $T.$

\item[(iv)] Let us set $y_{n}:=T^{-\left(  n\right)  }(y)\in\lbrack
u_{n},v_{n}]$ and $y_{\epsilon,n}:=T_{\epsilon}^{-\left(  n\right)  }%
(y)\in\lbrack u_{\epsilon,n},v_{\epsilon,n}].$ Suppose $y_{\epsilon,n}$ does
not converge to $y_{n}.$ Then, by passing again to subsequences and by
compactness, we can find $\tilde{y}\neq y_{n}$ such that\linebreak%
$\lim_{k\rightarrow\infty}y_{\epsilon_{k},n}=\tilde{y}.$ But $y=T_{\epsilon
_{k}}^{n}(y_{\epsilon_{k},n})=T_{\epsilon_{k}}^{n}(y_{\epsilon_{k}%
,n})-T_{\epsilon_{k}}^{n}(\tilde{y})+T_{\epsilon_{k}}^{n}(\tilde{y}).$ For $k$
going to infinity the last term tends to a value different from $y$ since $T$
is injective over $[u_{n},v_{n}],$ while the first difference goes to zero by
(ii) above.
\end{itemize}
\end{proof}

It is clear that with the previous assumptions the map $T_{\epsilon}$ will
admit a unique absolutely continuous invariant measure with density
$\rho_{\epsilon}.$ This density will be related to the invariant density
$\hat{\rho}_{\epsilon}$ of the first return map $F_{\epsilon}$ on
$I_{\epsilon}$ by the formula (\ref{FF}), with normalizing constant
$C_{\epsilon,r}.$ Our next result will be to prove the statistical stability
of the unperturbed density, namely

\begin{proposition}%
\begin{equation}
\lim_{\epsilon\rightarrow0^{+}}\left\Vert \rho-\rho_{\epsilon}\right\Vert
_{L_{m}^{1}}=0\,.
\end{equation}

\end{proposition}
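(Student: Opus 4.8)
The plan is to pass to the induced systems, where the invariant densities are bounded above and below and live in the good Banach spaces $B(I_{\epsilon})$, to prove $L^{1}$-convergence there, and then to pull the convergence back to $[0,1]$ through Pianigiani's formula (\ref{reldens}). The structural input is that Lemma~1 survives the perturbation with constants converging to those of $T$ (Remark~\ref{FR}): the first return maps $F_{\epsilon}$ on $I_{\epsilon}$ are aperiodic uniformly expanding Markov maps with uniformly bounded distortion, so their Perron--Frobenius operators $P_{\epsilon}$ satisfy a \emph{uniform} Lasota--Yorke inequality on $B(I_{\epsilon})$, and the invariant densities $\hat{\rho}_{\epsilon}$ of $F_{\epsilon}$ are bounded away from $0$ and $\infty$ and have uniformly bounded $B(I_{\epsilon})$-norm. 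One also knows from the preceding discussion that the Lebesgue tail of the first return time of $F_{\epsilon}$ decays exponentially, at a rate (and with a multiplicative constant) that stays uniform for $\epsilon$ small, since it only involves the local tangency exponents at $0,x_{0},1$ (which converge by Remark~\ref{FR}) and the uniform expansion.

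Since the endpoints of $I_{\epsilon}$ and the cusp $x_{\epsilon,0}$ converge to those of $I$ (Lemma~\ref{SL}(i)), choose piecewise affine homeomorphisms $h_{\epsilon}:I\to I_{\epsilon}$ carrying $(a_{0}^{\prime},x_{0})$ onto $(a_{\epsilon,0}^{\prime},x_{\epsilon,0})$ and $(x_{0},a_{0})$ onto $(x_{\epsilon,0},a_{\epsilon,0})$, with slopes tending to $1$. Transporting $\mu_{I_{\epsilon}}$ by $h_{\epsilon}^{-1}$ produces a measure on $I$ with density $\tilde{\rho}_{\epsilon}:=(\hat{\rho}_{\epsilon}\circ h_{\epsilon})\,|Dh_{\epsilon}|$, which is the fixed point of the Perron--Frobenius operator $\tilde{P}_{\epsilon}$ of the conjugated return map $h_{\epsilon}^{-1}\circ F_{\epsilon}\circ h_{\epsilon}$ on the \emph{fixed} domain $I$; note that conjugation normalizes the partition images back to $(a_{0}^{\prime},x_{0})$ and $(x_{0},a_{0})$. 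A bi-Lipschitz change of variables whose distortion tends to $1$ preserves, for $\epsilon$ small, the uniform Lasota--Yorke bound, so $\{\tilde{\rho}_{\epsilon}\}$ stays in a fixed ball of $B(I)$; as $B(I)$ is compactly injected into $L_{m}^{1}(I)$, every sequence $\epsilon_{k}\to0$ admits a subsequence along which $\tilde{\rho}_{\epsilon_{k}}\to\rho_{\ast}$ in $L_{m}^{1}(I)$, with $\rho_{\ast}$ still in that ball and bounded below.

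To identify $\rho_{\ast}$, observe that $\tilde{P}_{\epsilon}g\to Pg$ in $L_{m}^{1}(I)$ for each fixed $g\in B(I)$: the branches of $h_{\epsilon}^{-1}\circ F_{\epsilon}\circ h_{\epsilon}$ are the compositions $h_{\epsilon}^{-1}\circ T_{i_{n}}^{-1}\circ\cdots\circ T_{i_{1}}^{-1}\circ h_{\epsilon}$, whose inverse-branch maps and derivatives converge pointwise to the unperturbed ones by Lemma~\ref{SL}(iii)--(iv), while the uniform exponential tail of the return times controls the contribution of the branches of large return time; dominated convergence then gives the claim. Combining this with the $L^{1}$-contractivity of $\tilde{P}_{\epsilon_{k}}$ and passing to the limit in $\tilde{\rho}_{\epsilon_{k}}=\tilde{P}_{\epsilon_{k}}\tilde{\rho}_{\epsilon_{k}}$ yields $\rho_{\ast}=P\rho_{\ast}$; since $F$ is an aperiodic uniformly expanding Markov map it has a unique absolutely continuous invariant density, so $\rho_{\ast}=\hat{\rho}$. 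As the limit is independent of the subsequence, $\tilde{\rho}_{\epsilon}\to\hat{\rho}$, hence (undoing $h_{\epsilon}$, whose distortion tends to $1$) $\hat{\rho}_{\epsilon}\to\hat{\rho}$ in $L^{1}$.

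Finally, to return to the interval, use (\ref{reldens}) and the changes of variables behind (\ref{FF})--(\ref{FF2}): $\rho$ (resp. $\rho_{\epsilon}$) is an explicit series of normalized pushforwards of $\hat{\rho}|_{Z_{i}}$ (resp. $\hat{\rho}_{\epsilon}|_{Z_{\epsilon,i}}$) under the inverse branches of $T$ (resp. $T_{\epsilon}$), whose total $L^{1}$-mass equals the mean return time $C_{r}^{-1}=\sum_{i}\tau_{i}\mu_{I}(Z_{i})$; the tail of this series over $\{\tau_{i}>N\}$ is small uniformly in $\epsilon$ by the uniform exponential tail, each term converges in $L^{1}$ by the convergence of $\hat{\rho}_{\epsilon}$ together with Lemma~\ref{SL}, and $C_{\epsilon,r}\to C_{r}$, so splitting into a finite partial sum plus a uniformly small tail gives $\|\rho-\rho_{\epsilon}\|_{L_{m}^{1}}\to0$. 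I expect the main obstacle to be exactly the identification step on the common domain $I$: because only $C^{0}$-closeness of $T_{\epsilon}$ to $T$ is assumed, one cannot invoke an operator-norm perturbation of $P_{\epsilon}$ and must instead extract the limit by compactness and then verify its invariance, which forces one to push the pointwise convergence of Lemma~\ref{SL} through the infinite sum defining the first-return Perron--Frobenius operator — and it is the uniform control of the return-time tail, inherited from the uniform version of Lemma~1, that makes this interchange of limit and sum legitimate.
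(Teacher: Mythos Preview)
Your argument is correct and reaches the same conclusion, but the route at the induced level is genuinely different from the paper's.

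The paper does \emph{not} conjugate to a fixed domain. Instead, it compares $\hat{\rho}$ and $\hat{\rho}_{\epsilon}$ directly on the overlap $I\cap I_{\epsilon}$ by invoking an explicit \emph{uniform exponential rate} of convergence $\|P^{n}\mathbf{1}_{I}-\hat{\rho}\|_{\infty}\leq C\Lambda^{n}$ (and likewise for $\epsilon$), obtained from the Hilbert-metric estimates of \cite{LSV}; this lets one fix a finite $n=n(\eta)$ and reduce the problem to bounding $\int_{I\cap I_{\epsilon}}|P^{n}\mathbf{1}_{I}-P_{\epsilon}^{n}\mathbf{1}_{I_{\epsilon}}|\,dx$. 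That integral is then unwound by a telescoping iteration into a finite sum $\sum_{l=1}^{n}G_{l}+O(\epsilon)$, where each $G_{l}$ is a one-step comparison of $P$ and $P_{\epsilon}$ on an explicit test function, handled termwise via Lemma~\ref{SL} and dominated convergence. The novelty the paper stresses is precisely the device for coping with the \emph{different} induction domains without conjugation: one extends $\hat{\rho}_{n}=P^{n-1}\mathbf{1}_{I}$ continuously across $I_{\epsilon}\setminus I$ to a function $\tilde{\rho}_{n}\in BV(I_{\epsilon})$ and exploits the $L^{1}$-contractivity of $P_{\epsilon}$ on $BV(I_{\epsilon})$.

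Your approach, by contrast, is closer in spirit to Alves--Viana: after the piecewise-affine conjugation $h_{\epsilon}$ (which fixes the image partition $\{(a_{0}^{\prime},x_{0}),(x_{0},a_{0})\}$, so the Banach space $B(I)$ is the same for all $\epsilon$), you use the uniform Lasota--Yorke bound and the compact embedding $B(I)\hookrightarrow L^{1}_{m}$ to extract a subsequential limit, and then identify it as $\hat{\rho}$ via the strong convergence $\tilde{P}_{\epsilon}g\to Pg$ for fixed $g\in B(I)$. This is cleaner conceptually and avoids the need for the explicit spectral-gap constants from \cite{LSV}; the price is that one must verify that the Lasota--Yorke constants survive conjugation (they do, since the slopes of $h_{\epsilon}$ tend to $1$) and that the limit $\rho_{\ast}$ stays in $B(I)$ so that $\tilde{P}_{\epsilon}\rho_{\ast}\to P\rho_{\ast}$ applies (it does, by lower semicontinuity of the $B(I)$-norm under $L^{1}$-convergence). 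The paper's route is more constructive and in principle yields quantitative error bounds, while yours gives a shorter soft argument. For the second part---pulling the convergence from $I$ back to $[0,1]$ via Pianigiani's formula, the uniform exponential tail, and Lemma~\ref{SL}---your outline matches the paper's essentially step for step.
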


\begin{proof}
The proof is divided into two parts. The second part, which concerns the
comparison of the invariant densities outside the regions of induction, will
follow closely the proof of an analogous result given in \cite{BV}, but in our
case the proof will be easier since the quantities we are going to consider
have an exponential tail contrarily the corresponding ones analysed in
\cite{BV} where the presence of a neutral fixed point forced those quantities
to decay polynomially fast. The first part concerns the comparison of the
invariant densities inside the regions of induction and this part is new.

\begin{itemize}
\item[\emph{First part}] Let us suppose without restriction that the induction
sets $I=(a_{0}^{\prime},a_{0})\backslash\{x_{0}\},\newline I_{\epsilon
}=(a_{\epsilon,0}^{\prime},a_{\epsilon,0})\backslash\{x_{\epsilon,0}\}$ verify
$a_{\epsilon,0}^{\prime}<a_{0}^{\prime},\ a_{\epsilon,0}<a_{0}.$ In the
following, to ease the notation we will simply write $dx$ instead of $dm(x)$
for the (normalized) Lebesgue measure on $[0,1]$ and, for any interval
$J\subset\left[  0,1\right]  ,$ we will set $\left\vert J\right\vert
:=m\left(  J\right)  .$ We begin by bounding
\begin{equation}
\int_{I\cap I_{\epsilon}}|\hat{\rho}\left(  x\right)  -\hat{\rho}_{\epsilon
}\left(  x\right)  |dx\ .\label{E1}%
\end{equation}
In footnote 3 we defined the Banach spaces $B(I)$ and $B(I_{\epsilon}),$ which
are invariant respectively under the action of the Perron-Frobenius operators
$P$ and $P_{\epsilon}.$ The densities $\hat{\rho}$ and $\hat{\rho}_{\epsilon}$
belong respectively to these spaces and they are Lipschitz continuous on the
open intervals $(a_{0}^{\prime},x_{0})\cup(x_{0},a_{0})$ and $(a_{\epsilon
,0}^{\prime},x_{\epsilon,0})\cup(x_{\epsilon,0},a_{\epsilon,0}).$ In fact we
have to consider the action of the Perron-Frobenius operators on a larger
functional space namely that of functions of bounded variation. It is a
standard result that the Perron-Frobenius operator associated to Gibbs-Markov
maps with bounded distortion leaves invariant this space and moreover it
satisfies a Lasota-Yorke inequality for the complete norm given by the sum of
the total variation and the $L_{m}^{1}$ norm, see for instance \cite{AB} for
an account of these results. We denote by $BV(I)$ and $BV(I_{\epsilon})$ the
Banach spaces of functions of bounded variations defined respectively on the
induction sets $I$ and $I_{\epsilon},$ and by $\left\Vert \cdot\right\Vert
_{BV(I)},\ \left\Vert \cdot\right\Vert _{BV(I_{\epsilon})}$ the respective
norms. We remark that the Lebesgue measure associated to this norms should be
understood as normalized to the sets $I$ and $I_{\epsilon}.$ Since the
Perron-Frobenius operators $P$ and $P_{\epsilon}$ are quasi-compact on
respectively $BV(I)$ and $BV(I_{\epsilon}),$ we know that, in the limit
$n\rightarrow\infty,$%
\begin{align}
\left\Vert P^{n}\mathbf{1}_{I}-\hat{\rho}\right\Vert _{BV(I)} &
\rightarrow0\ ,\label{E2}\\
\left\Vert P_{\epsilon}^{n}\mathbf{1}_{I_{\epsilon}}-\hat{\rho}_{\epsilon
}\right\Vert _{BV(I_{\epsilon})} &  \rightarrow0\ .\label{E3}%
\end{align}
It will be important for what follows the convergence of the two previous
limits to be uniform with respect to $\epsilon$ in the $L_{m}^{\infty},$ and
therefore in the $L_{m}^{1},$ norms. This is guaranteed by the results in
\cite{LSV}, in particular Lemmas 4.8 and 4.11. As a matter of fact, our first
return Gibbs-Markov maps fit the assumptions of the \emph{covering systems}
with countably many branches investigated in \cite{LSV}. In particular, it can
be proven that there exist two constants $C$ an $\Lambda$ such that
$\left\Vert P^{n}\mathbf{1}_{I}-\hat{\rho}\right\Vert _{\infty}\leq
C\Lambda^{n},$ where the constant $C$ and the rate $\Lambda$ have an explicit
and $C^{\infty}$ dependence on some parameters charaterizing the map and its
expanding properties\footnote{These constants can be explicitly computed using
the Hilber metric approach. In particular $C=(1+a)D_{H}e^{D_{H}\Lambda
^{-2N_{0}}}\Lambda^{-2N_{0}},$ and $\Lambda=\left(  \tanh\frac{D_{H}}%
{4}\right)  ^{\frac{1}{N_{0}}}$. The integer $N_{0}$ insures that the
hyperbolic diameter of the iterate $P^{N_{0}}$ of a certain cone of bounded
variation functions is finite and bounded by $D_{H}.$ In particular,
$a,\Delta$ and $D_{H}$ are smooth functions of the quantities $\nu$ and $D$
entering the Lasota-Yorke inequality (see next footnote).}. Therefore, given
$\eta>0$ we can choose $n$ large enough, depending on $\eta,$ and such that
\begin{align}
\int_{I\cap I_{\epsilon}}|\hat{\rho}-\hat{\rho}_{\epsilon}|dx &  =\int_{I\cap
I_{\epsilon}}\left\vert \hat{\rho}-P^{n}\mathbf{1}_{I}+P^{n}\mathbf{1}%
_{I}+P_{\epsilon}^{n}\mathbf{1}_{I_{\epsilon}}-P_{\epsilon}^{n}\mathbf{1}%
_{I_{\epsilon}}-\hat{\rho}_{\epsilon}\right\vert dx\nonumber\\
&  \leq2\eta+\int_{I\cap I_{\epsilon}}\left\vert P^{n}\mathbf{1}%
_{I}-P_{\epsilon}^{n}\mathbf{1}_{I_{\epsilon}}\right\vert dx\ .\label{E5}%
\end{align}
Let us introduce, for $n\geq2,$%
\begin{equation}
\hat{\rho}_{n}:=P^{n-1}\mathbf{1}_{I}\ ;\ \hat{\rho}_{\epsilon,n}%
:=P_{\epsilon}^{n-1}\mathbf{1}_{I_{\epsilon}}%
\end{equation}
and finally $\tilde{\rho}_{n}:=\hat{\rho}_{n}$ on $I\cap I_{\epsilon}$ and
$\tilde{\rho}_{n}:=a_{n},$ on $I_{\epsilon}\backslash(I\cap I_{\epsilon}),$
where\linebreak$a_{n}=\lim_{x\rightarrow a_{0}^{\prime+}}\hat{\rho}_{n}(x).$
Notice that this right limit exists since $\hat{\rho}_{n}$ is Lipschitz
continuous on $(a_{0}^{\prime},x_{0})$ and moreover $\tilde{\rho}_{n}\in
BV(I_{\epsilon})$ as proven in Section 2. We remark that the need of
considering $BV\left(  I_{\epsilon}\right)  $ follows by the fact that
$\tilde{\rho}_{n}$ could be discontinuous in $x_{\epsilon,0}.$ Let us rewrite
the second term in (\ref{E5}) as
\begin{gather}
\int_{I\cap I_{\epsilon}}\left\vert P^{n}\mathbf{1}_{I}-P_{\epsilon}%
^{n}\mathbf{1}_{I_{\epsilon}}\right\vert dx=\int_{I\cap I_{\epsilon}}%
|P\hat{\rho}_{n}-P_{\epsilon}\hat{\rho}_{\epsilon,n}|dx\label{E6}\\
\leq\int_{I\cap I_{\epsilon}}\left\vert P\hat{\rho}_{n}-P_{\epsilon}%
\tilde{\rho}_{n}\right\vert dx+\int_{I\cap I_{\epsilon}}|P_{\epsilon}%
\tilde{\rho}_{n}-P_{\epsilon}\hat{\rho}_{\epsilon,n}|dx\ .\nonumber
\end{gather}
We now consider the term $\int_{I\cap I_{\epsilon}}|P_{\epsilon}\tilde{\rho
}_{n}-P_{\epsilon}\hat{\rho}_{\epsilon,n}|dx;$ by the positivity and the
contraction in $L_{m}^{1}$ of the Perron-Frobenius operator, we have
\begin{gather}
\int_{I\cap I_{\epsilon}}|P_{\epsilon}\tilde{\rho}_{n}-P_{\epsilon}\hat{\rho
}_{\epsilon,n}|dx\leq\int_{I_{\epsilon}}|P_{\epsilon}\tilde{\rho}%
_{n}-P_{\epsilon}\hat{\rho}_{\epsilon,n}|dx\leq\int_{I_{\epsilon}}|\tilde
{\rho}_{n}-\hat{\rho}_{\epsilon,n}|dx\\
\leq\int_{I_{\epsilon}\cap I}|\hat{\rho}_{n}-\hat{\rho}_{\epsilon,n}%
|dx+\int_{I_{\epsilon}\backslash(I_{\epsilon}\cap I)}|a_{n}-\hat{\rho
}_{\epsilon,n}|dx\nonumber\\
=\int_{I_{\epsilon}\cap I}|P\hat{\rho}_{n-1}-P_{\epsilon}\hat{\rho}%
_{\epsilon,n-1}|dx+\int_{I_{\epsilon}\backslash(I_{\epsilon}\cap I)}%
|a_{n}-\hat{\rho}_{\epsilon,n}|dx\nonumber\\
\leq\int_{I_{\epsilon}\cap I}|P\hat{\rho}_{n-1}-P_{\epsilon}\hat{\rho
}_{\epsilon,n-1}|dx+m(I_{\epsilon}\backslash(I_{\epsilon}\cap I))(||\hat{\rho
}_{n}||_{\infty}+||\hat{\rho}_{\epsilon,n}||_{\infty})\nonumber
\end{gather}
where the $L_{m}^{\infty}$-norm should be understood in terms of the
normalized Lebesgue measures respectively on $I$ and $I_{\epsilon}.$ But each
of these norms is bounded by the Banach norm and in particular for $\hat{\rho
}_{n}$ we have, by the Lasota-Yorke inequality,
\begin{equation}
\left\Vert \hat{\rho}_{n}\right\Vert _{\infty}\leq\left\Vert \hat{\rho}%
_{n}\right\Vert _{BV(I)}\leq\left\Vert P^{n-1}\mathbf{1}_{I}\right\Vert
_{BV(I)}\leq\nu^{n-1}\left\Vert \mathbf{1}_{I}\right\Vert _{BV(I)}%
+D\ .\label{LY}%
\end{equation}
This last quantity, for all $n$ large enough, is less than a constant $C_{2}$
and the same argument also apply to $\left\Vert \hat{\rho}_{\epsilon
,n}\right\Vert _{\infty}.$ Moreover, setting $C_{2}$ and $C_{\epsilon,2}$ the
constants bounding (\ref{LY}) in the unperturbed and perturbed case, for
$\epsilon$ sufficiently small, we have that the difference $|C_{2}%
-C_{\epsilon,2}|$ is bounded by a constant independent of $\epsilon
$\footnote{The constant $\nu<1$ and $D$ are in fact explicitly determined in
terms of the map, we defer to \cite{AB} for the details. To compare with what
stated in \cite{AB}, we need to show that there exists a power $n_{0}$ of the
first return map $F$ having the absolute value of its derivative uniformly
larger than $2.$ In the case of interest, this follows easily from the proof
of Lemma 1 by combining the Markov structure of $F$ with the lower bound for
the absolute value of its derivative which is uniformly larger than $1$ and
which is an explicit function of the parameters describing the local behavior
of the map $T,$ in particular $\alpha^{\prime},\alpha$ and $d_{\left(
1,0\right)  }.$ The quantities $\nu$ and $D$ are then functions of the lower
bound of $\left\vert DF^{n_{0}}\right\vert $ and of the constant, which we
denote by $D^{\prime},$ appearing in the Adler's condition. This last
condition is equivalent to prove that $T$ has bounded distortion and we defer
to \cite{CHMV} where the constant bounding the distortion is explicitly
determined as a function of the parameters defining the map. In the present
case, a simple inspection of the proof in \cite{CHMV} shows that such a
constant is a multiple of $d_{\left(  0,1\right)  }.$ Hence, a contribution to
$D^{\prime}$ comes from $d_{\left(  0,1\right)  },$ while the other one
\cite{CHMV} comes from the divergent behavior of the second derivative close
to the fixed point. However, in our case, the H\"{o}lder continuity assumption
on the first derivative of the map and the exponential decay of the lenght of
$Z_{i}$ makes this second contribution simply bounded by $1.$}. By setting
\begin{equation}
G_{l}:=\int_{I_{\epsilon}\cap I}|P\hat{\rho}_{l}-P_{\epsilon}\tilde{\rho}%
_{l}|dx
\end{equation}
with $l=1,\cdots,n$ and $\hat{\rho}_{1}:=\mathbf{1}_{I},\ \hat{\rho}%
_{\epsilon},1:=\mathbf{1}_{I}{}_{\epsilon}$ we have
\begin{equation}
\int_{I\cap I_{\epsilon}}|P\hat{\rho}_{n}-P_{\epsilon}\hat{\rho}_{\epsilon
,n}|dx=\sum_{l=1}^{n}G_{l}+(n-1)C_{2}m(I_{\epsilon}\backslash(I_{\epsilon}\cap
I))
\end{equation}
where $m(I_{\epsilon}\backslash(I_{\epsilon}\cap I))=O(\epsilon).$\newline In
order to compute the term $G_{l}$ we have to use the explicit structure of the
Perron-Frobenius operator. In particular we have
\begin{gather}
\int_{I_{\epsilon}\cap I}|P\hat{\rho}_{n}-P_{\epsilon}\tilde{\rho}_{n}%
|dx=\int_{I_{\epsilon}\cap I}\left\vert \sum_{i\geq1}\frac{\hat{\rho}%
_{n}(F_{i}^{-1}x)}{|DF(F_{i}^{-1}x)|}-\sum_{i\geq1}\frac{\tilde{\rho}%
_{n}(F_{\epsilon,i}^{-1}x)}{|DF_{\epsilon}(F_{\epsilon,i}^{-1}x)|}\right\vert
dx\leq\nonumber\\
\int_{I_{\epsilon}\cap I}\sum_{i\geq1}\left\vert \frac{\hat{\rho}_{n}%
(F_{i}^{-1}x)}{|DF(F_{i}^{-1}x)|}-\frac{\hat{\rho}_{n}(F_{i}^{-1}%
x)}{|DF_{\epsilon}(F_{\epsilon,i}^{-1}x)|}+\frac{\hat{\rho}_{n}(F_{i}^{-1}%
x)}{|DF_{\epsilon}(F_{\epsilon,i}^{-1}x)|}-\frac{\tilde{\rho}_{n}%
(F_{\epsilon,i}^{-1}x)}{|DF_{\epsilon}(F_{\epsilon,i}^{-1}x)|}\right\vert
dx\,.\label{FC}%
\end{gather}
Actually what we want to do is to compare the preimages of the perturbed and
of the unperturbed first return maps whose direct images are defined on
cylinders with the \emph{same} return times. This can always be done and in
particular we will consider points $x$ whose perturbed and unperturbed
preimages are both defined. At this regard, it will be enough to erase from
$I_{\epsilon}\cap I$ the open interval with endpoints $x_{\epsilon,0},x_{0}$
whose measure goes to zero in the limit $\epsilon\rightarrow0.$ We will prove
that the sum in (\ref{FC}) is bounded uniformly in $\epsilon$ in order to
exchange the sum with the limit $\epsilon\rightarrow0.$ We remind that the
perturbed and unperturbed induced first return maps are Gibbs-Markov and have
bounded distortion and that $\left\Vert \hat{\rho}_{n}\right\Vert _{\infty
}<C_{2}.$ Therefore, on each interval $Z_{i}^{j}$ (resp. $Z_{\epsilon,i}^{j}$)
$i\geq1,j=1,2,$ where $F$ (resp. $F_{\epsilon,i}$) is injective we have:

\begin{itemize}
\item For any $i\geq1;j=1,2$ and $\forall x,y\in F(Z_{i}^{j})$, we have
$\frac{|DF(F_{i}^{-1}x)|}{|DF(F_{i}^{-1}y)|}\leq D_{1}$\ and $\forall x,y\in
F(Z_{\epsilon,i}^{j})$ we have $\frac{|DF_{\epsilon}(F_{\epsilon,i}^{-1}%
x)|}{|DF_{\epsilon}(F_{\epsilon,i}^{-1}y)|}\leq D_{2}.$

\item For $\epsilon$ small enough, by the argument developed in the footnote
(4), the difference $|D_{1}-D_{2}|$ is bounded by a constant independent of
$\epsilon.$

\item There exists $y\in Z_{i}^{j}$ (resp. $Z_{\epsilon,i}^{j}$) such that
$\left\vert DF\left(  y\right)  \right\vert =\frac{|F(Z_{i}^{j})|}{\left\vert
Z_{i}^{j}\right\vert }$ (resp. $\left\vert DF_{\epsilon}\left(  y\right)
\right\vert =\frac{|F_{\epsilon}(Z_{\epsilon,i}^{j})|}{\left\vert
Z_{\epsilon,i}^{j}\right\vert }$).
\end{itemize}

This immediately implies that the first term in (\ref{FC}) is bounded by
\begin{equation}
\int_{I_{\epsilon}\cap I}\sum_{i\geq1}\frac{\hat{\rho}_{n}(F_{i}^{-1}%
x)}{|DF(F_{i}^{-1}x)|}dx\leq C_{2}D_{1}\sum_{i\geq1}\frac{|Z_{i}|}{|F(Z_{i}%
)|}\ .
\end{equation}
Similar bounds hold also for the other three terms in (\ref{FC}). We remind
that the images of the $Z_{i}$ have length $(x_{0}-a_{0}^{\prime})$ and the
sum over the $\left\vert Z_{i}\right\vert $'s gives the length of $I.$ We can
therefore take the limit $\epsilon\rightarrow0$ in (\ref{FC}). Let us consider
the first two terms in (\ref{FC}),
\begin{equation}
\sum_{i\geq1}\left\vert \frac{\hat{\rho}_{n}(F_{i}^{-1}x)}{|DF(F_{i}^{-1}%
x)|}-\frac{\hat{\rho}_{n}(F_{i}^{-1}x)}{|DF_{\epsilon}(F_{\epsilon,i}^{-1}%
x)|}\right\vert =\sum_{i\geq1}\left\vert \frac{\hat{\rho}_{n}(F_{i}^{-1}%
x)}{|DF(F_{i}^{-1}x)|}\right\vert \left\vert 1-\frac{|DF(F_{i}^{-1}%
x)|}{|DF_{\epsilon}(F_{\epsilon,i}^{-1}x)|}\right\vert \ .
\end{equation}
We can bound this quantity making use of Lemma ({\ref{SL}) part (iii) and part
(iv) first and then by observing that the point $F_{i}^{-1}x$ does not
coincide with $x_{0}.$ Let us set $w:=F_{i}^{-1}x;\ w_{\epsilon}%
:=F_{\epsilon,i}^{-1}x$ and $F=T^{i}.$ Then,
\begin{equation}
\left\vert \frac{DF(F_{i}^{-1}x)}{DF_{\epsilon}(F_{\epsilon,i}^{-1}%
x)}\right\vert =\prod_{m=0}^{i-1}\left\vert \frac{DT(T^{m}w)DT_{\epsilon
}(T^{m}w)}{DT_{\epsilon}(T_{\epsilon}^{m}w_{\epsilon})DT_{\epsilon}(T^{m}%
w)}\right\vert \label{DD}%
\end{equation}
We notice that }$\left\vert {T^{m}w-T_{\epsilon}^{m}w_{\epsilon}}\right\vert
=O\left(  {\epsilon}\right)  ,${\ the intervals with endpoints $T^{i}w$ and
$T_{\epsilon}^{i}w_{\epsilon}$ do not contain $x_{\epsilon,0}$ and their
length tends to zero when $\epsilon$ vanishes. Therefore,
\begin{equation}
\prod_{m=0}^{i-1}\left\vert \frac{DT(T^{m}w)}{DT_{\epsilon}(T^{m}%
w)}\right\vert \exp\left[  \sum_{m=0}^{i-1}\frac{1}{|DT_{\epsilon}%
(y)|}C_{h,\epsilon}(\left\Vert T^{m}-T_{\epsilon}^{m}\right\Vert _{0}%
^{\iota_{\epsilon}}+|T^{m}w-T^{m}{w_{\epsilon}}|)\right]  \ ,
\end{equation}
where $y$ is a point between $T^{i}w$ and $T_{\epsilon}^{i}w_{\epsilon}.$
Hence, by Assumption B, this term tends to $1$ in the limit $\epsilon
\rightarrow0$\footnote{Actually $y$ depends on $\epsilon,\ y=y_{\epsilon}.$
Setting $y^{\ast}:=\lim_{\epsilon\rightarrow0}y_{\epsilon},$ we get
\[
|DT_{\epsilon}(y_{\epsilon})-DT(y^{\ast})|\leq|DT_{\epsilon}(y_{\epsilon
})-DT_{\epsilon}(y^{\ast})|+|DT_{\epsilon}(y^{\ast})-DT(y^{\ast})|\ .
\]
\par
The first term on the r.h.s. can be bounded making use of the H\"{o}lder
continuity assumption on $DT_{\epsilon},$ the second making use of Assumption
B.}.\newline Moreover, the other couple of terms in (\ref{FC}),
\begin{equation}
\sum_{i\geq1}\left\vert \frac{\hat{\rho}_{n}(F_{i}^{-1}x)}{|DF_{\epsilon
}(F_{\epsilon,i}^{-1}x)|}-\frac{\tilde{\rho}_{n}(F_{\epsilon,i}^{-1}%
x)}{|DF_{\epsilon}(F_{\epsilon,i}^{-1}x)|}\right\vert \leq\sum_{i\geq1}%
\frac{1}{|DF_{\epsilon}(F_{\epsilon,i}^{-1}x)|}|\hat{\rho}_{n}(F_{i}%
^{-1}x)-\tilde{\rho}_{n}(F_{\epsilon,i}^{-1}x)|\ .
\end{equation}
We remark that the function $\tilde{\rho}_{n}$ is a continuous extension of
$\hat{\rho}$ to}\linebreak{$I_{\epsilon}\backslash(I\cap I_{\epsilon})$ and
therefore we can rewrite }%
\begin{equation}
{|\hat{\rho}_{n}(F_{i}^{-1}x)-\tilde{\rho}_{n}(F_{\epsilon,i}^{-1}%
x)|=|\tilde{\rho}_{n}(F_{i}^{-1}x)-\tilde{\rho}_{n}(F_{\epsilon,i}^{-1}x)|\ ,}%
\end{equation}
{where $\tilde{\rho}_{n}$ is now defined on $I\cup I_{\epsilon}.$ This
function is continuous on}\linebreak{$I\cup I_{\epsilon}\backslash\{x_{0}\}$
and, by part (iv) of Lemma (\ref{SL}),}\newline{$\lim_{\epsilon\rightarrow
0^{+}}|\tilde{\rho}_{n}(F_{i}^{-1}x)-\tilde{\rho}_{n}(F_{\epsilon,i}%
^{-1}x)|=0.$\newline To resume: for $n$ larger than a certain $n(\eta),$%
\begin{equation}
(\ref{E5})\leq2\eta+\sum_{l=1}^{n}G_{l}+(n-1)O(\epsilon)\ ;
\end{equation}
each $G_{l}$ is bounded uniformly w.r.t. $\epsilon$ and tends to zero for
$\epsilon$ tending to zero. Therefore we can pass to the limit $\epsilon
\rightarrow0$ and then $\eta\rightarrow0.$}

\item[\emph{Second part}] According to the assumptions made at the beginning
of the first part and without loss of generality, we will assume that all
$W_{\epsilon,n}$ lie to the left of the corresponding $W_{n}.$
Therefore we have
\begin{gather}
\int_{\lbrack0,1]}|\rho-\rho_{\epsilon}|dx=\int_{I\cap I_{\epsilon}}|\rho
-\rho_{\epsilon}|dx+\int_{I\cap W_{\epsilon,1}}|\rho-\rho_{\epsilon
}|dx\nonumber\\
+\int_{I_{\epsilon}\cap W_{1}^{\prime}}|\rho-\rho_{\epsilon}|dx\nonumber\\
+\sum_{l=1}^{\infty}\left\{  \int_{W_{l}\cap W_{\epsilon,l}}|\rho
-\rho_{\epsilon}|dx+\int_{W_{l}\backslash(W_{l}\cap W_{\epsilon,l})}|\rho
-\rho_{\epsilon}|dx\right\}  \nonumber\\
+\sum_{l=1}^{\infty}\left\{  \int_{W_{l}^{\prime}\cap W_{\epsilon,l}^{\prime}%
}|\rho-\rho_{\epsilon}|dx+\int_{W_{l}^{\prime}\backslash(W_{l}^{\prime}\cap
W_{\epsilon,l}^{\prime})}|\rho-\rho_{\epsilon}|dx\right\}  \ .\label{maino}%
\end{gather}
The densities are given in terms of the corresponding densities of the induced
subsets and of the multiplicative constants $C_{r}$ and $C_{\epsilon,r}.$
Hence, we should first compare the latter. Since they are surely smaller than
$1,$ we have
\begin{equation}
|C_{r}-C_{\epsilon,r}|\leq\sum_{i=1}^{\infty}i\left\vert \int_{Z_{i}^{1}}%
\hat{\rho}\frac{dx}{m(I)}-\int_{Z_{\epsilon,i}^{1}}\hat{\rho_{\epsilon}}%
\frac{dx}{m(I_{\epsilon})}\right\vert \ .
\end{equation}
The same bound holds also choosing $Z_{i}^{2}$ ($Z_{\epsilon,i}^{2}$) instead
of $Z_{i}^{1}$ ($Z_{\epsilon,i}^{1}$). The sum converges uniformly as a
function of $\epsilon$ since the $L_{m}^{\infty}$ norms of $\hat{\rho}$ and
$\hat{\rho_{\epsilon}}$ are bounded by $C_{2}$ and the lengths of the
$Z_{i}^{1}$ and $Z_{\epsilon,i}^{1}$ decay exponentially fast. We now show
that passing to the limit $\epsilon\rightarrow0$ inside the sum this vanishes.
At this regard we rewrite the previous bound as
\begin{align}
&  \sum_{i=1}^{\infty}i\left\vert \int_{Z_{i}^{1}\cap Z_{\epsilon,i}^{1}}%
\hat{\rho}\frac{dx}{m(I)}+\int_{Z_{i}^{1}\backslash(Z_{i}^{1}\cap
Z_{\epsilon,i}^{1})}\hat{\rho}\frac{dx}{m(I)}\right.  \\
&  \left.  -\int_{Z_{\epsilon,i}^{1}\cap Z_{i}^{1}}\hat{\rho_{\epsilon}}%
\frac{dx}{m(I_{\epsilon})}-\int_{Z_{\epsilon,i}^{1}\backslash(Z_{i}^{1}\cap
Z_{\epsilon,i}^{1})}\hat{\rho_{\epsilon}}\frac{dx}{m(I_{\epsilon})}\right\vert
\nonumber\\
&  \leq\sum_{i=1}^{\infty}i\left[  2C_{2}m(Z_{i}^{1}\Delta Z_{\epsilon,i}%
^{1})+C_{2}\left\vert \frac{1}{m(I)}-\frac{1}{m(I_{\epsilon})}\right\vert
+\int_{Z_{i}^{1}\cap Z_{\epsilon,i}^{1}}|\hat{\rho}-\hat{\rho_{\epsilon}%
}|\frac{dx}{m(I_{\epsilon})}\right]  \ .\nonumber
\end{align}
Each term in the last sum vanishes in the limit $\varepsilon\rightarrow0,$ in
particular the third term tends to zero by what stated in the first part of
the proof.\newline Moreover, by Lemma 1 and by the fact that the derivatives
of the maps $T$ and $T_{\epsilon}$ are strictly expanding in the neighborhood
of $x_{0},$ for $x\in(0,a_{0}^{\prime}),$ we get
\begin{equation}
\sum_{m=2}^{\infty}\sum_{l=1,2}\frac{1}{|DT^{m}(T_{l}^{-1}T_{2}^{-1}%
T_{1}^{-(m-2)}x)|}\leq C_{3}\frac{1}{\left(  \alpha\alpha^{\prime}\log
\alpha^{\prime}\right)  }:=C_{4}\ .
\end{equation}
Furthermore, for $x\in(0,a_{0}),$%
\begin{equation}
\sum_{l=1,2}\frac{1}{|DT(T_{l}^{-1}x)|}\leq\left(  \min_{(b_{2},b_{1}%
)\cup(b_{1}^{\prime},b_{2}^{\prime})}|DT|\right)  ^{-1}:=C_{5}\ .
\end{equation}
Analogous bounds hold also for the perturbed map, so we can choose the
constants $C_{4},C_{5}$ independent of $\epsilon.$ Let us call $\rho_{s}$
(resp. $\rho_{r}$), the representations of the invariant density on
$(0,x_{0})$ (resp. $(x_{0},1))$ without the normalizing factor $C_{r}.$ By the
previous bounds on the derivatives of $T$ and the boundness of the densities
on the induced spaces, it follows immediately that there exists a constant
$C_{6}$ such that the $L_{m}^{\infty}$ norms of $\rho_{s}$ and $\rho_{r}$ are
bounded by $C_{6}.$ The same argument also holds for $\rho_{\epsilon,s}$ and
$\rho_{\epsilon,r}$ and, since $C_{6}$ can be chosen independent of
$\epsilon,$ $\left\Vert \rho_{\epsilon,s}\right\Vert _{\infty},\left\Vert
\rho_{\epsilon,r}\right\Vert _{\infty}\leq C_{6}.$\newline We can now proceed
to bound each term in (\ref{maino}). For the first one we get
\begin{equation}
\int_{I\cap I_{\epsilon}}|\rho-\rho_{\epsilon}|dx\leq|C_{r}-C_{\epsilon
,r}|\int_{I\cap I_{\epsilon}}\hat{\rho}dx+C_{\epsilon,r}\int_{I\cap
I_{\epsilon}}|\hat{\rho}-\hat{\rho_{\epsilon}}|dx\ ,
\end{equation}
which can be bounded uniformly in $\epsilon$ by arguing as in the previous
computations. For the second term (the third one can be bounded in the same
way) we have
\begin{equation}
\int_{I\cap W_{\epsilon,1}}|\rho-\rho_{\epsilon}|dx\leq|C_{r}-C_{\epsilon
,r}|\int_{I\cap W_{\epsilon,1}}\hat{\rho}dx+C_{\epsilon,r}\int_{I\cap
W_{\epsilon,1}}|\hat{\rho}-\rho_{\epsilon,s}|dx\ .
\end{equation}
The right hand side is uniformly bounded in $\epsilon,$ in particular
\begin{equation}
C_{\epsilon,r}\int_{I\cap W_{\epsilon,1}}|\hat{\rho}-\rho_{\epsilon,s}%
|dx\leq(C_{2}+C_{6})m(I\cap W_{\epsilon,1})\ ,
\end{equation}
where vanishes $m(I\cap W_{\epsilon,1})$ in the limit $\epsilon\rightarrow
0.$\newline We now consider the last sum in (\ref{maino}). Similar arguments
allow to bound the remaining sum which is even easier to handle. We first
have
\begin{gather}
\sum_{l=1}^{\infty}\int_{W_{l}^{\prime}\backslash(W_{l}^{\prime}\cap
W_{\epsilon,l}^{\prime})}|\rho-\rho_{\epsilon}|dx\leq\sum_{l=1}^{\infty
}\left[  |C_{r}-C_{\epsilon,r}|\int_{W_{l}^{\prime}\backslash(W_{l}^{\prime
}\cap W_{\epsilon,l}^{\prime})}\rho_{s}dx\right.  \\
\left.  +C_{\epsilon,r}\int_{W_{l}^{\prime}\backslash(W_{l}^{\prime}\cap
W_{\epsilon,l}^{\prime})}|\rho_{s}-\rho_{\epsilon,s}|dx\right]  \ .\nonumber
\end{gather}
The sum is uniformly convergent as a function of $\epsilon$ since\linebreak%
$W_{l}^{\prime}\backslash(W_{l}^{\prime}\cap W_{\epsilon,l}^{\prime})\subset
W_{l}^{\prime}$ and the length of such an interval decays exponentially fast
with rate independent of $\epsilon.$ Finally, previous considerations imply
that each term into the sum goes to zero in the limit $\epsilon\rightarrow
0.$\newline Finally we have
\begin{gather}
\sum_{l=1}^{\infty}\int_{W_{l}^{\prime}\cap W_{\epsilon,l}^{\prime}}|\rho
-\rho_{\epsilon}|dx\leq\\
\sum_{l=1}^{\infty}\left[  |C_{r}-C_{\epsilon,r}|\int_{W_{l}^{\prime}\cap
W_{\epsilon,l}^{\prime}}\rho_{s}dx+C_{\epsilon,r}\int_{W_{l}^{\prime}\cap
W_{\epsilon,l}^{\prime}}|\rho_{s}-\rho_{\epsilon,s}|dx\right]  \ .\nonumber
\end{gather}
The preceding considerations also apply to the first sum in this formula
proving this to vanish in the limit $\epsilon\rightarrow0.$ For the second sum
we make use of the representations of $\rho_{s}$ and $\rho_{\epsilon,s}$ in
terms of the density on the induced space. Thus we have
\begin{gather}
\int_{W_{l}^{\prime}\cap W_{\epsilon,l}^{\prime}}|\rho_{s}-\rho_{\epsilon
,s}|dx\leq\\
\int_{W_{l}^{\prime}\cap W_{\epsilon,l}^{\prime}}\sum_{p=l+2}^{\infty}%
\sum_{k=1,2}\left\vert \frac{\hat{\rho}(T_{k}^{-1}T_{2}^{-1}T_{1}%
^{-(p-l-2)}x)}{|DT^{p-l}(T_{k}^{-1}T_{2}^{-1}T_{1}^{-(p-l-2)}x)|}-\frac
{\hat{\rho}_{\epsilon}(T_{\epsilon,k}^{-1}T_{\epsilon,2}^{-1}T_{\epsilon
,1}^{-(p-l-2)}x)}{|DT_{\epsilon}^{p-l}(T_{\epsilon,k}^{-1}T_{\epsilon,2}%
^{-1}T_{\epsilon,1}^{-(p-l-2)}x)|}\right\vert dx\leq\nonumber\\
\int_{W_{l}^{\prime}\cap W_{\epsilon,l}^{\prime}}\sum_{p=l+2}^{\infty}%
\sum_{k=1,2}\left\vert \frac{\hat{\rho}(T_{k}^{-1}T_{2}^{-1}T_{1}%
^{-(p-l-2)}x)}{|DT^{p-l}(T_{k}^{-1}T_{2}^{-1}T_{1}^{-(p-l-2)}x)|}-\frac
{\hat{\rho}_{\epsilon}(T_{\epsilon,k}^{-1}T_{\epsilon,2}^{-1}T_{\epsilon
,1}^{-(p-l-2)}x)}{|DT^{p-l}(T_{k}^{-1}T_{2}^{-1}T_{1}^{-(p-l-2)}%
x)|}\right\vert dx+\nonumber\\
\int_{W_{l}^{\prime}\cap W_{\epsilon,l}^{\prime}}\sum_{p=l+2}^{\infty}%
\sum_{k=1,2}\left\vert \frac{\hat{\rho}_{\epsilon}(T_{\epsilon,k}%
^{-1}T_{\epsilon,2}^{-1}T_{\epsilon,1}^{-(p-l-2)}x)}{|DT^{p-l}(T_{k}^{-1}%
T_{2}^{-1}T_{1}^{-(p-l-2)}x)|}-\frac{\hat{\rho}_{\epsilon}(T_{\epsilon,k}%
^{-1}T_{\epsilon,2}^{-1}T_{\epsilon,1}^{-(p-l-2)}x)}{|DT_{\epsilon}%
^{p-l}(T_{\epsilon,k}^{-1}T_{\epsilon,2}^{-1}T_{\epsilon,1}^{-(p-l-2)}%
x)|}\right\vert dx=\nonumber\\
Q_{1,l}+Q_{2,l}\ .\nonumber
\end{gather}
We further decompose $Q_{1,l}$ as
\begin{align}
Q_{1,l} &  =\int_{W_{l}^{\prime}\cap W_{\epsilon,l}^{\prime}}\sum
_{p=l+2}^{\infty}\sum_{k=1,2}\frac{1}{|DT^{p-l}(T_{k}^{-1}T_{2}^{-1}%
T_{1}^{-(p-l-2)}x)|}\left\vert \hat{\rho}(T_{k}^{-1}T_{2}^{-1}T_{1}%
^{-(p-l-2)}x)-\right.  \\
&  \left.  \hat{\rho}(T_{\epsilon,k}^{-1}T_{\epsilon,2}^{-1}T_{\epsilon
,1}^{-(p-l-2)}x)+\hat{\rho}(T_{\epsilon,k}^{-1}T_{\epsilon,2}^{-1}%
T_{\epsilon,1}^{-(p-l-2)}x)-\hat{\rho}_{\epsilon}(T_{\epsilon,k}%
^{-1}T_{\epsilon,2}^{-1}T_{\epsilon,1}^{-(p-l-2)}x)\right\vert \ .\nonumber
\end{align}
Changing variables, setting $y_{k}:=T_{k}^{-1}T_{2}^{-1}T_{1}^{-(p-l-2)}x$
and\linebreak$y_{\epsilon,k}:=y_{\epsilon}(y_{k})=T_{\epsilon,k}%
^{-1}T_{\epsilon,2}^{-1}T_{\epsilon,1}^{-(p-l-2)}(T^{p}y_{k}),$ since $y_{k}$
and $y_{\epsilon,k}$ belong to $Z_{p}^{k}\cup Z_{\epsilon,p}^{k},$ we get
\begin{equation}
Q_{1,l}=\sum_{k=1,2}\sum_{p=l+2}^{\infty}\int_{Z_{p}^{k}}|\hat{\rho}%
(y_{k})-\hat{\rho}(y_{\epsilon,k})+\hat{\rho}(y_{\epsilon,k})-\hat{\rho
}_{\epsilon}(y_{\epsilon,k})|dy_{k}\ .
\end{equation}
But,
\begin{equation}
\sum_{l=1}^{\infty}Q_{1,l}\leq4C_{2}\sum_{l=1}^{\infty}\sum_{p=l+2}^{\infty
}m(Z_{p})\ ,
\end{equation}
which is clearly convergent because the measure of $Z_{p}$ is exponentially
decreasing. Moreover, by what has been shown in the first part of the proof,
\begin{equation}
\lim_{\epsilon\rightarrow0}\int_{Z_{p}^{k}}|\hat{\rho}(y_{\epsilon,k}%
)-\hat{\rho}_{\epsilon}(y_{\epsilon,k})|dy_{k}=0\ .
\end{equation}
On the other hand, we first take $\epsilon$ small enough to get $y_{\epsilon
,k}$ on the same side of $x_{0}$ as $y_{k}$ and then we use the Lipschitz
continuity property of $\hat{\rho}$ to conclude, by observing that
$y_{\epsilon,k}$ tends to $y_{k}$ when $\epsilon$ tends to zero, that also
\begin{equation}
\lim_{\epsilon\rightarrow0}\int_{Z_{p}^{k}}|\hat{\rho}(y_{k})-\hat{\rho
}(y_{\epsilon,k})|dy_{k}=0\ .
\end{equation}
We now consider $Q_{2,l}$ and show that it is uniformly bounded in $\epsilon.$
As a matter of fact,
\begin{align}
Q_{2,l} &  =\int_{W_{l}^{\prime}\cap W_{\epsilon,l}^{\prime}}\sum
_{p=l+2}^{\infty}\sum_{k=1,2}\left\vert \frac{\hat{\rho}_{\epsilon
}(T_{\epsilon,k}^{-1}T_{\epsilon,2}^{-1}T_{\epsilon,1}^{-(p-l-2)}%
x)}{\left\vert DT^{p-l}(T_{k}^{-1}T_{2}^{-1}T_{1}^{-(p-l-2)}x)\right\vert
}\right\vert \times\\
&  \times\left\vert 1-\frac{\left\vert DT^{p-l}(T_{k}^{-1}T_{2}^{-1}%
T_{1}^{-(p-l-2)}x)\right\vert }{\left\vert DT_{\epsilon}^{p-l}(T_{\epsilon
,k}^{-1}T_{\epsilon,2}^{-1}T_{\epsilon,1}^{-(p-l-2)}x)\right\vert }\right\vert
dx\ .\nonumber
\end{align}
We bound it by the sum of its two parts: the density will have bounded
infinity norm; the sums in $p$ over the inverses of the derivatives are
bounded by a constant since the derivatives decay exponentially fast and the
sums over $l$ will be controlled by the measure of $W_{l}^{\prime}.$ Finally
the same arguments that led to bound (\ref{DD}) apply also to the second
factor in the previous expression proving it tends to zero in the limit
$\epsilon\rightarrow0.$
\end{itemize}

This concludes the proof.
\end{proof}

\bigskip

We end up our analysis considering two examples of the perturbed Lorenz system
giving rise to perturbed versions of the map $T$ of the kind discussed in this section.

\begin{example}
Let us consider a perturbation of the Lorenz field (\ref{l1}) obtained by
adding the constant forcing field $\left(  0,0,-\epsilon\beta\left(
\rho+\sigma\right)  \right)  ,\ \epsilon>0.$ The perturbation is easily seen
to preserve the symmetry under the involution $R$ of the unperturbed field.
Arguing as in the first section, for $\epsilon$ sufficiently small, the
perturbed system will keep the same features of the unperturbed one, hence map
$T_{\epsilon}$ is easily seen to satisfy (\ref{T_DT1}-\ref{T_DT4}) as well as
Assumptions A-D. Here it follows the plot of $T_{\epsilon}$, for $\epsilon=0.5$,
and the plot of the fit of the invariant density $\rho_{R}$ for the evolution
under the maps $T$ and $T_{\epsilon}$, corresponding respectively to the choice 
of the Poincar\'{e} surfaces $\Sigma_{+},\Sigma_{+}^{\epsilon}$, the last one being 
contructed as in the unperturbed case.
\end{example}

\begin{figure}[htbp]
\centering
\resizebox{0.75\textwidth}{!}{%
\includegraphics{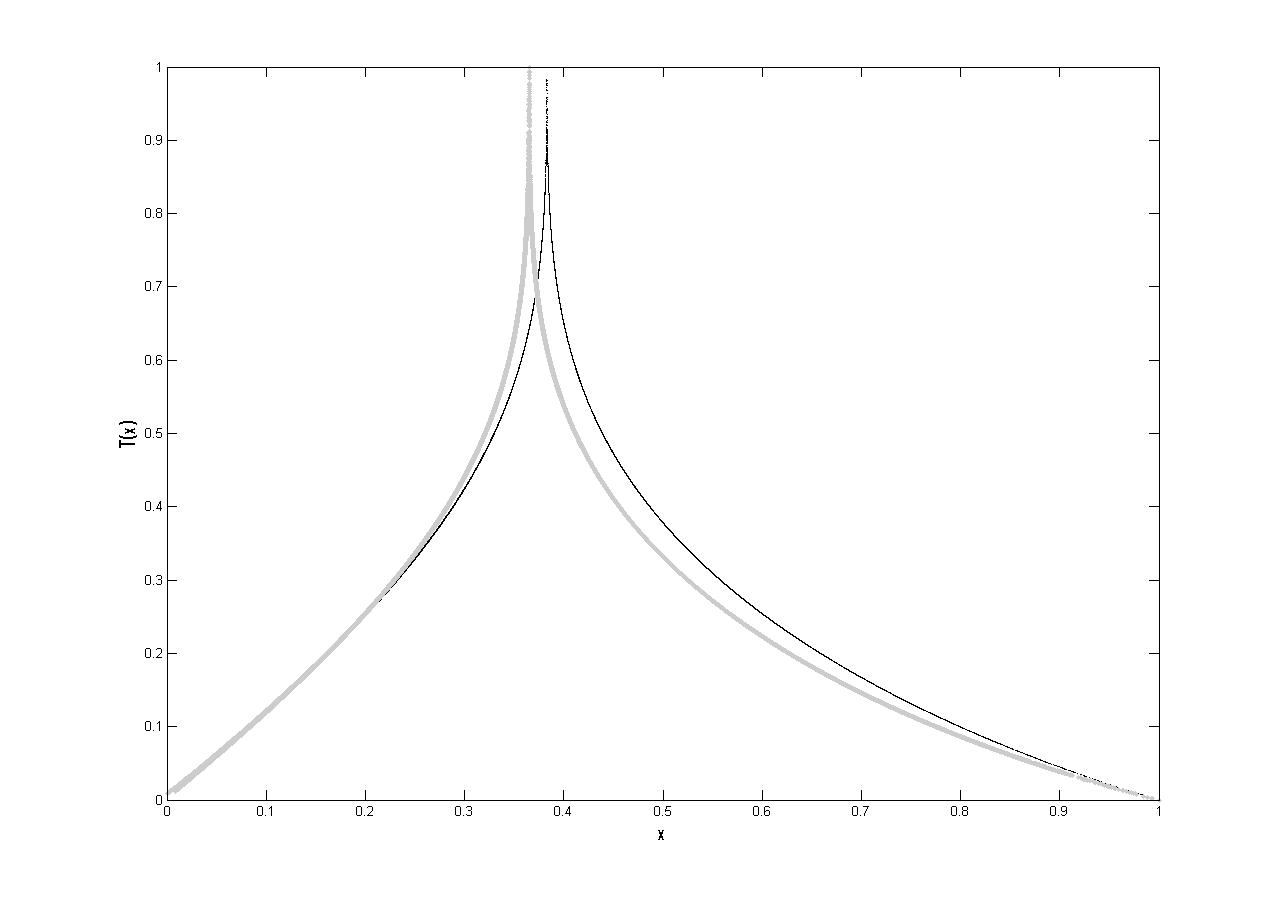}
}
\caption{$T_{\epsilon}$ (thick line), $T$ (thin line).}
\label{fig:3}       
\end{figure}

\begin{figure}[htbp]
\centering
\resizebox{0.75\textwidth}{!}{%
\includegraphics{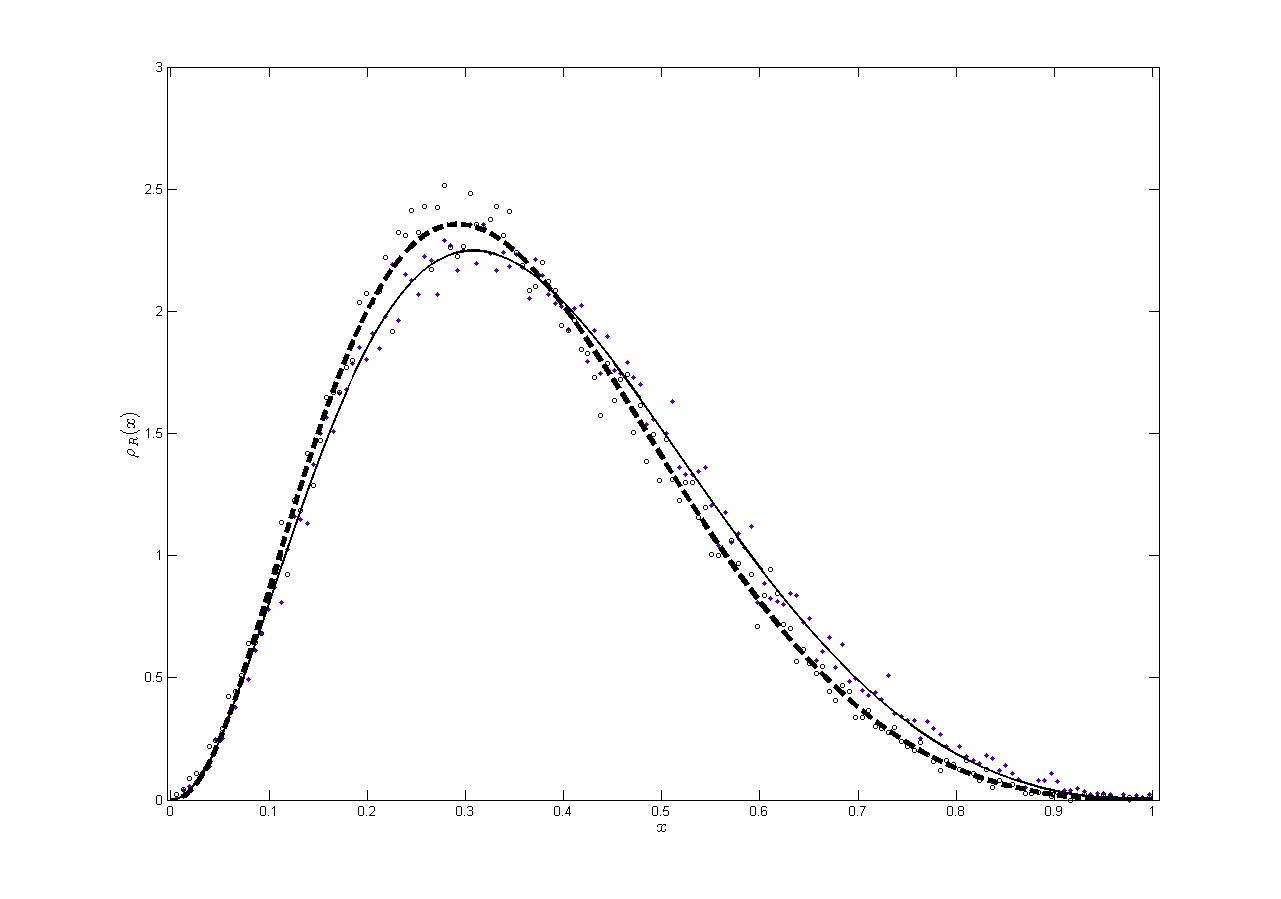}
}
\caption{Fit of the invariant density for the evolution under $T$ (solid line) and under $T_{\epsilon},\epsilon=0.5$, (dashed line).}
\label{fig:4}       
\end{figure}

\newpage

\begin{example}
We now consider the following perturbation of the Lorenz field (\ref{l1})
realized by adding the field $\left(  \epsilon\cos\theta,\epsilon\sin
\theta,0\right)  $ where $\epsilon>0$ and $\theta\in\lbrack0,2\pi).$ The
perturbed system is not $R$-invariant anymore, anyway, for $\epsilon$
sufficiently small, the system will still have a saddle fixed point
$c_{0}^{\epsilon}$ and two unstable fixed points $c_{1}^{\epsilon}%
,c_{2}^{\epsilon}.$ Hence, for any $\theta\in\lbrack0,2\pi),$ we have two
different $T_{\epsilon},$ namely $T_{\epsilon}^{+},T_{\epsilon}^{-},$ both
satisfying (\ref{T_DT1}-\ref{T_DT4}) as well as Assumptions A-D corresponding
respectively to the choice of the Poincar\'{e} surfaces $\Sigma_{+}^{\epsilon
},\Sigma_{-}^{\epsilon},$ which can be constructed as in the unperturbed case.
To obtain meaningful plots of the deviation from $T$ of the perturbed maps as
well as of the deviation of the associated invatiant densities from the
unperturbed one, $\epsilon$ has been set equal to $2.5$ and $\theta$ to $70%
{{}^\circ}%
$ as in \cite{CMP}.
\end{example}

\begin{figure}[htbp]
\centering
\resizebox{0.75\textwidth}{!}{%
\includegraphics{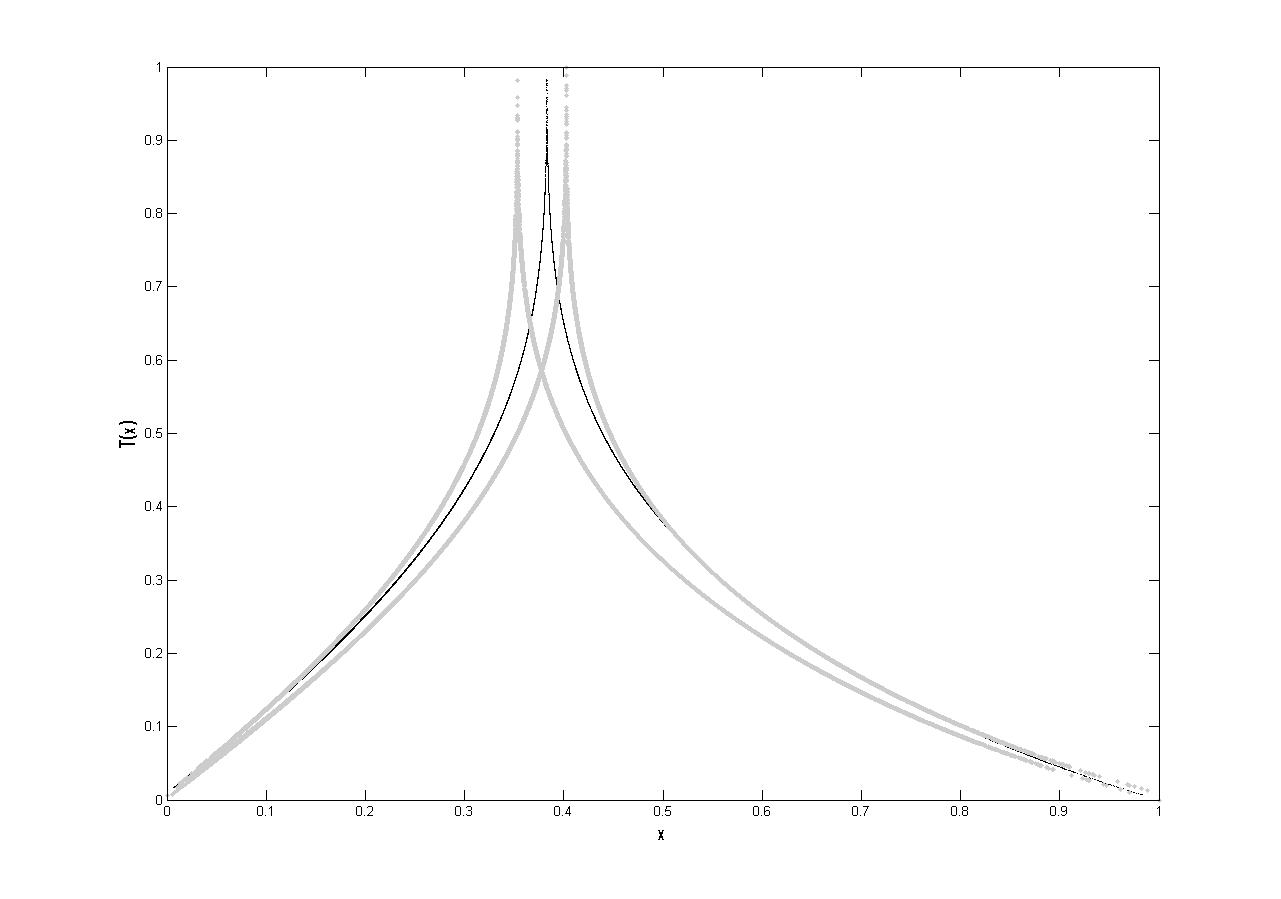}
}
\caption{$T$ (thin line), $T_{\epsilon}^{+}$ (thick line to the left of $T$), $T_{\epsilon}^{-}$ (thick line to the right of $T$).}
\label{fig:5}       
\end{figure}

\begin{figure}[htbp]
\centering
\resizebox{0.75\textwidth}{!}{%
\includegraphics{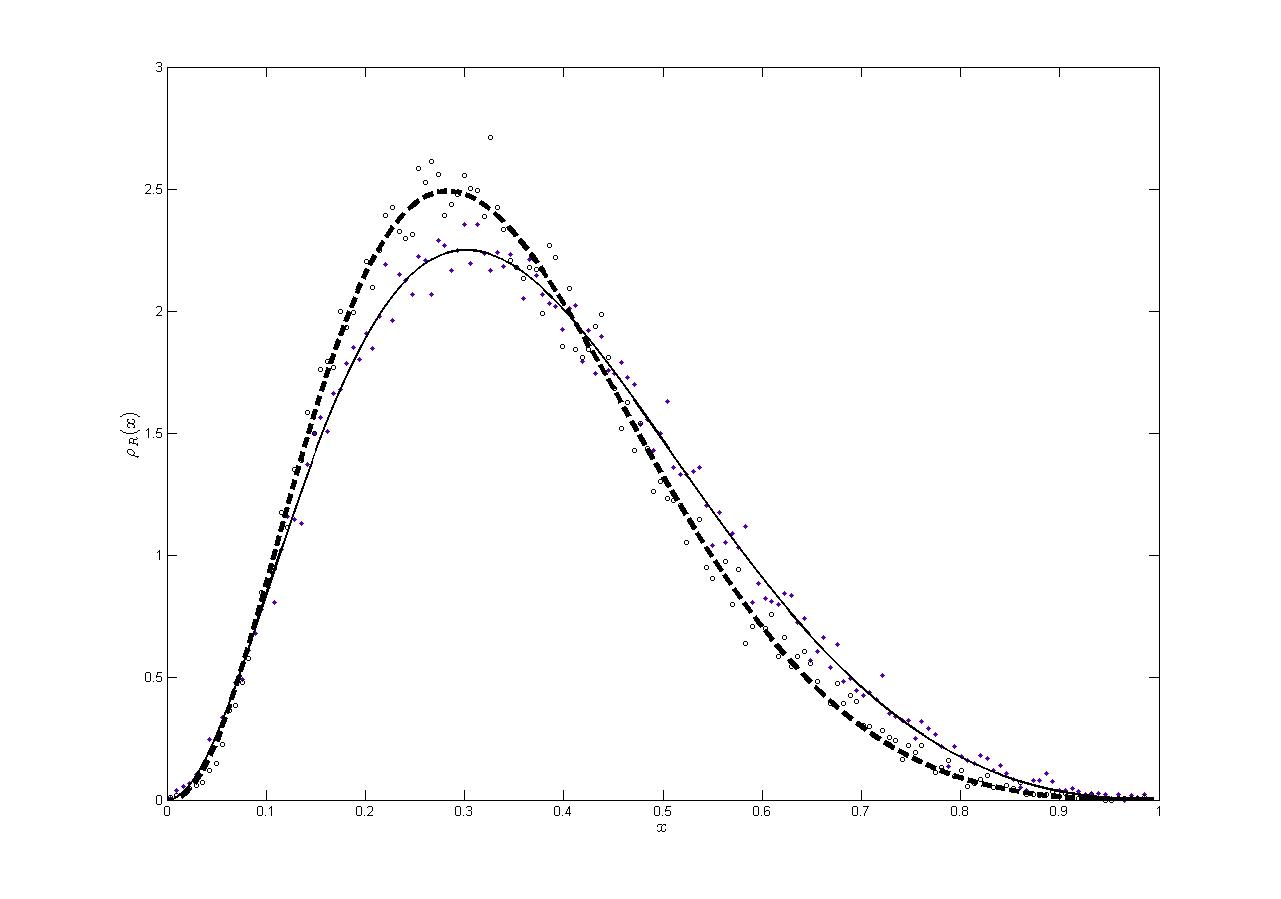}
}
\caption{Fit of the invariant densities for the evolution under $T$ (solid line) and under $T_{\epsilon}^{+}$ (dashed line).}
\label{fig:6}       
\end{figure}

\begin{figure}[htbp]
\centering
\resizebox{0.75\textwidth}{!}{%
\includegraphics{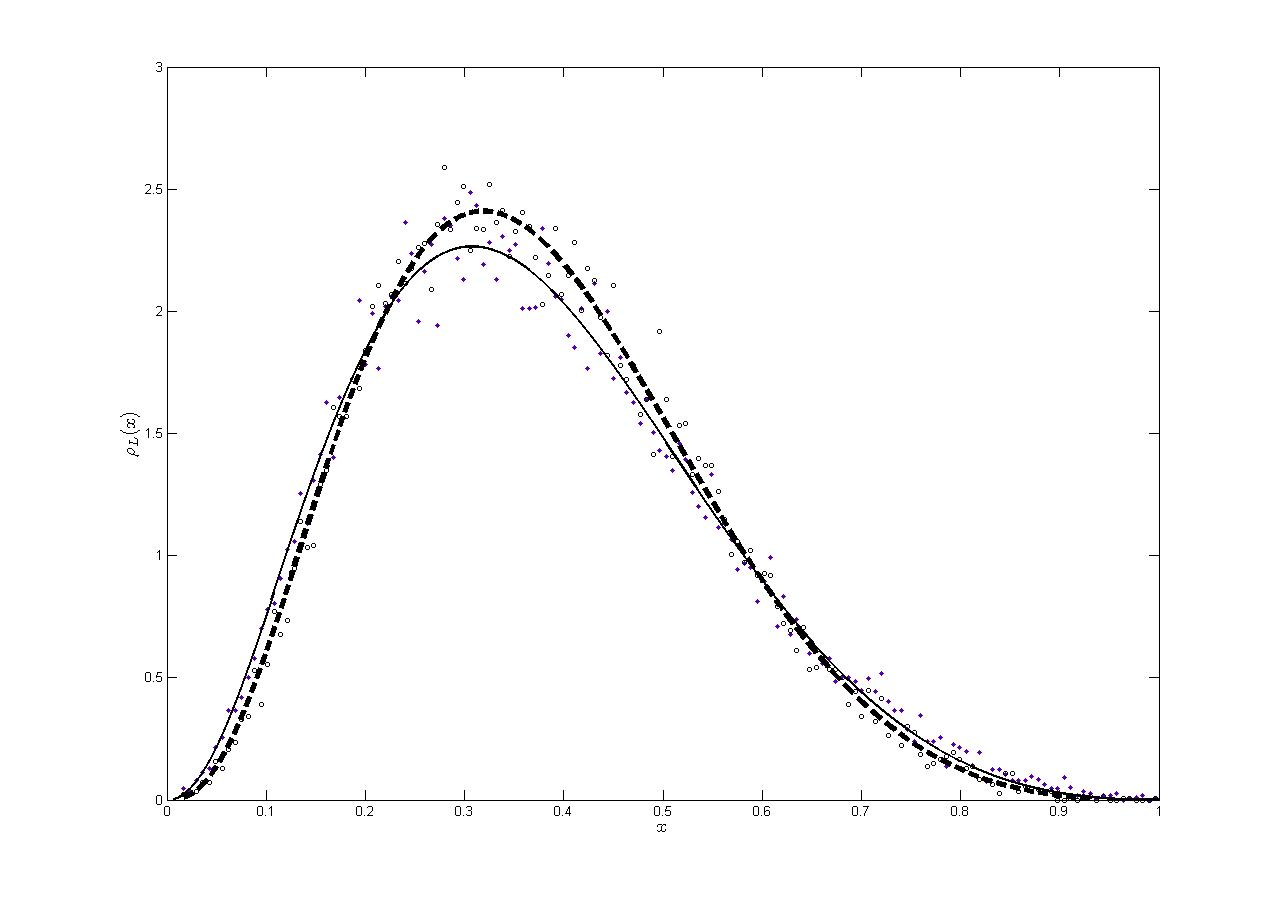}
}
\caption{Fit of the invariant densities for the evolution under $T$ (solid line) and under $T_{\epsilon}^{-}$ (dashed line).}
\label{fig:7}       
\end{figure}

\newpage

\begin{itemize}
\item Michele GIANFELICE

Dipartimento di Matematica

Universit\`{a} della Calabria

Campus di Arcavacata

Ponte P. Bucci - Cubo 30B

I-87036 Arcavacata di Rende (CS)

gianfelice@mat.unical.it

\item Filippo MAIMONE and Vinicio PELINO

Italian Air Force, CNMCA

Aeroporto "De Bernardi"

Via di Pratica di Mare

I-00040 Roma

maimone@meteoam.it

pelino@meteoam.it

\item Sandro VAIENTI

UMR-6207 Centre de Physique Th\'{e}orique, CNRS, Universit\'{e}
d'Aix-Marseille I, II, Universit\'{e} du Sud, Toulon-Var and FRUMAM,
F\'{e}d\'{e}ration de Recherche des Unit\'{e}s des Math\'{e}matiques de Marseille

CPT Luminy, Case 907, F-13288 Marseille CEDEX 9

vaienti@cpt.univ-mrs.fr
\end{itemize}

\end{document}